\documentclass[12pt]{amsart}




\usepackage[latin1]{inputenc}					
\usepackage{amsmath,amsfonts,amssymb,amsthm}	

\usepackage{geometry}		
\usepackage{mathtools}		
\usepackage{hyperref}		
\usepackage{setspace}		
\usepackage{cite}			


\geometry{textwidth=20.3cm-4.5cm,hratio=1:1,tmargin=2.75cm,bmargin=3cm}		
\setstretch{1.2}	


\hypersetup{colorlinks=true,citecolor=blue,linkcolor=blue,urlcolor=magenta}


\numberwithin{equation}{section}			
\setcounter{tocdepth}{1}					


\newtheorem{theorem}{Theorem}
\newtheorem{proposition}{Proposition}
\newtheorem{lemma}{Lemma}
\newtheorem{corollary}{Corollary}

\newtheorem{remark}{Remark}
\newtheorem{definition}{Definition}

\newtheorem*{theorem*}{Theorem}
\newtheorem*{proposition*}{Proposition}
\newtheorem*{lemma*}{Lemma}
\newtheorem*{corollary*}{Corollary}

\newtheorem*{conjecture*}{Conjecture}
\newtheorem*{question*}{Question}
\newtheorem*{remark*}{Remark}
\newtheorem*{definition*}{Definition}


\renewcommand{\leq}{\leqslant}	
\renewcommand{\geq}{\geqslant}	


\newcommand{\N}{\mathbb{N}}		
\newcommand{\Z}{\mathbb{Z}}		
\newcommand{\Q}{\mathbb{Q}}		
\newcommand{\R}{\mathbb{R}}		
\newcommand{\C}{\mathbb{C}}		
\newcommand{\T}{\mathbb{T}}		


\newcommand{\eps}{\varepsilon}			
\DeclareMathOperator{\re}{Re}			


\newcommand{\wt}{\widetilde}		
\newcommand{\wh}{\widehat}			


\newcommand{\E}{\mathbb{E}}			
\newcommand{\p}{\mathbb{P}}			




\DeclareMathOperator{\Ker}{Ker}			
\DeclareMathOperator{\rk}{rk}			

\newcommand{\M}{\mathcal{M}}			
\newcommand{\bx}{\mathbf{x}}			
\newcommand{\by}{\mathbf{y}}			
\newcommand{\bbA}{\mathbb{A}}			
\newcommand{\bbK}{\mathbb{K}}			
\newcommand{\bB}{\mathbf{B}}			

\newcommand{\isom}{\xrightarrow{
   \,\smash{\raisebox{-0.25ex}{\ensuremath{\scriptstyle\sim}}}\,}}

\newtheorem{fact}{Fact}
\newtheorem*{fact*}{Fact}

\newcommand{\transp}[1]{\prescript{\mathrm{t}}{}{#1}}

\begin{document}

\title{On systems of complexity one in the primes}

\author{Kevin Henriot}

\date{}

\begin{abstract}
	Consider a translation-invariant
	system of linear equations $V \bx = 0$
	of complexity one,
	where $V$ is an integer $r \times t$ matrix.
	We show that if $A$ is a subset of
	the primes up to $N$ of density at
	least $C(\log\log N)^{-1/25t}$, there
	exists a solution $\bx \in A^t$ 
	to $V \bx = 0$ with distinct coordinates.
	This extends a quantitative result of 
	Helfgott and de Roton
	for three-term arithmetic progressions,
	while the qualitative result is known to hold 
	for all translation-invariant systems of 
	finite complexity by the work
	of Green and Tao.
\end{abstract}

\maketitle

\section{Introduction}
\label{sec:intro}

Consider a matrix $V \in \M_{r \times t}(\Z)$
with coefficients on each line summing to $0$,
a condition we term \textit{translation-invariant}.
We are interested in special instances of the problem of 
finding a distinct-coordinates 
solution $\by \in A^t$ to the system of equations
$V \by = 0$, where $A$ is a dense subset 
of the set $\mathcal{P}_N$ of the primes up to a large integer $N$, 
and when the relative density decays with $N$.
Note that the distinct-coordinates condition excludes
trivial solutions of the form $(u,\dots,u)$,
while the conditions of homogeneity and translation-invariance
on the system of equations are necessary to expect 
a Szemerédi-type theorem for $V \by = 0$,
as can be seen by examining the case of a single linear equation
(see e.g.~\cite[Theorem~1.3]{Ruzsa:lineqs}).

We may assume that $V$ has rank $r$ up to
removing redundant equations.
Furthermore, we may work in practice with a parametrization 
$\psi: \Z^{t-r} \isom \Z^t \cap \Ker(V)$,
and look instead for occurences of 
distinct-coordinates values of $\psi$ in $A^t$.
The canonical setting of study
is that of the single translation-invariant equation 
${y_1 + y_3 = 2y_2}$, which detects $3$-term arithmetic progressions, 
themselves parametrized by the system of forms
\begin{align*}
	\psi(x_1,x_2) = (x_1, x_1 + x_2, x_1 + 2x_2).
\end{align*}
It is then a well-known result of Green~\cite{G:rothprimes}
that every subset of $\mathcal{P}_N$ of positive density
contains a non-trivial three-term arithmetic progression;
and the extension of this result to progressions of any length
is the celebrated Green-Tao theorem~\cite{GT:apsinprimes}.
Green's argument~\cite{G:rothprimes} 
actually allowed for densities as low as 
$(\log\log\log\log N)^{-1/2+o(1)}$,
and Helfgott and de Roton~\cite{HR:rothprimes}
later obtained a remarkable quantitative strenghtening
of this result.
\begin{theorem}[Helfgott, de Roton]
	Suppose that $A$ is a subset of $\mathcal{P}_N$
	of density at least\footnote{Throughout this introduction, we write $(\log_k N)^{o(1)}$ 
	for unspecified factors of the form $C(\log_{k+1} N)^C$ with $C >0$,
	where $\log_k$ is the $k$-th iterated logarithm.}
	\begin{align*}
		(\log\log N)^{-1/3+o(1)}.
	\end{align*}
	Then there exists a non-trivial three-term arithmetic progression in $A$.
\end{theorem}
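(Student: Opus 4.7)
The plan is to adapt Green's Fourier-analytic transference to the primes setting, but to replace the internal Roth step by Bourgain's Bohr-set density-increment iteration (this is the source of the improved exponent over Green's four-fold iterated logarithm). The target is: assuming $A \subset \mathcal{P}_N$ of density $\alpha$ has no non-trivial three-term progression, extract a large non-principal Fourier coefficient of the balanced indicator of $A$, convert this into a density increment of strength $\sim \alpha$ on a regular Bohr set whose rank grows by $O(\alpha^{-2})$ and whose radius shrinks by $\alpha^{O(1)}$, and iterate until the density would exceed $1$.

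The first step is the $W$-trick. With $w \sim \log\log\log N$ and $W = \prod_{p \leq w} p$, pigeonholing over invertible residues modulo $W$ produces some $a \in (\Z/W\Z)^{\times}$ for which $A_a := \{n \leq N' : Wn + a \in A\}$ has relative density at least $\alpha$ among the primes, where $N' = \lfloor N/W \rfloor$. This neutralises the local obstructions from small primes and inflates the usable density by a factor of order $\log N$. I would then build an enveloping sieve $\nu$ on $\Z_{N'}$ that pointwise majorises $f_A := (\varphi(W)/W)\log N \cdot 1_{A_a}$ and satisfies a Ramar\'e-type restriction estimate $\|\widehat{g}\|_{\ell^q(\Z_{N'})} \leq C$, uniformly for $0 \leq g \leq \nu$, at some fixed exponent $q \in (2,3)$.

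With $\nu$ in place, the $f_A$-weighted three-term progression count admits the Fourier expansion $\sum_\xi \widehat{f_A}(\xi)^2\,\widehat{f_A}(-2\xi)$, so either this matches the model count $\sim \alpha^3 N'^2$ (and we are done), or some $\xi \neq 0$ has $|\widehat{f_A}(\xi)| \gtrsim \alpha^{O(1)}$. In the latter case, simultaneous Diophantine approximation of $\xi$ places it in the annihilator of a regular Bohr set $B(S,\rho)$ on which $x \mapsto e(\xi x)$ is nearly constant, and standard manipulations then yield a density increment $\alpha \to (1+c)\alpha$ on a translate of $B$. I would iterate this scheme inside the Bohr set, with $\nu \cdot 1_B$ playing the role of the new majorant.

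The main obstacle will be to preserve the pseudorandomness of $\nu$ through repeated localisation: a direct appeal to the global restriction estimate loses too much at each Bohr step. It is therefore necessary to prove a Bohr-relative restriction estimate of the shape $\|\widehat{g \cdot 1_B}\|_{\ell^q(\Z_{N'})} \lesssim \mu(B)^{1 - 1/q}$ for $0 \leq g \leq \nu$, leveraging the regularity of $B$ against the global estimate for $\nu$. Once this relative estimate is in hand, the iteration becomes quantitatively equivalent to Bourgain's Bohr-set Roth, and optimising the choice of $w$ against the cumulative rank-radius loss produces the final threshold $(\log\log N)^{-1/3 + o(1)}$.
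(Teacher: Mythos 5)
Your proposal departs from Helfgott--de Roton's actual argument in two structural ways, one of which is a genuine gap, the other of which rules out the claimed exponent.

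First, the modulus. You take $w \sim \log\log\log N$, which is essentially Green's original choice. The paper (and Helfgott--de Roton) emphasise that a \emph{large} modulus $\omega \sim c\log N$ is the critical feature of the efficient transference: it is what pushes the threshold from iterated-logarithm territory ($(\log_4 N)^{-c}$, which is all Green's scheme gives) to the doubly-logarithmic scale $(\log\log N)^{-1/3+o(1)}$. With $w \sim \log\log\log N$ the $W/\varphi(W)$ boost is only $\sim \log\log\log\log N$, and no amount of downstream optimisation will recover the stated bound.

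Second, and more fundamentally, Helfgott--de Roton do \emph{not} iterate a Bohr-set density increment inside the primes. They perform a \emph{single} transference step: form the smoothed function $\lambda'_A = \lambda_A \ast \mu_B$ on a Bohr set $B$ of large spectrum, show via a Tomas--Stein/Ramar\'e-type restriction estimate that $\|\lambda'_A\|_{L^p}$ is bounded so that a level set of $\lambda'_A$ is a genuinely dense subset of $\Z_M$, and then feed that dense set into an integer Roth theorem (Bourgain's) used as a black box. The difference $\lambda_A - \lambda'_A$ is controlled once, by a Fourier $\ell^4$ (or $U^2$) bound combined with the restriction estimate, and the iteration happens entirely in the integers. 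Your scheme instead re-localises the majorant $\nu$ to $\nu \cdot 1_B$ at every increment step, and the ``Bohr-relative restriction estimate'' $\|\widehat{g\cdot 1_B}\|_{\ell^q} \lesssim \mu(B)^{1-1/q}$ that you would need for this is not proved by Helfgott--de Roton, is not an obvious consequence of the global one, and is where your argument has a real hole: once you restrict to a sub-Bohr set of shrinking measure, the $W$-tricked primes become sparse in it and the implicit constant degrades, so the cumulative loss over $\alpha^{-O(1)}$ iterations is not under control. You flag this as ``the main obstacle,'' but it is precisely the step that the transference-then-apply-integer-Roth architecture is designed to avoid, and without a proof of it your argument does not close.
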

Naslund~\cite{Nasl:rothprimes} further
improved the lowest admissible density to
$(\log\log N)^{-1+o(1)}$.
It should be noted that these transference arguments preserve, 
up to a logarithm, the exponent in the best known bounds 
for Roth's theorem by Sanders~\cite{S:roth},
on which they rely:
indeed Sanders established that
three-term arithmetic progressions may be found
in any subset of $[N]$ of density
at least $(\log N)^{-1+o(1)}$.

In the context of counting linear
patterns in primes~\cite{GT:lineqs}, 
Green and Tao introduced the notion of 
\textit{Cauchy-Schwarz complexity}\footnote{
A more subtle notion of complexity, 
called \textit{true complexity}, was later developed by 
Gowers and Wolf~\cite{GW:complexity}.
However it does not seem, at present, to cover the setting 
of unbounded prime-counting functions.
}
(abbreviated as complexity in the following)
for systems of integer linear forms.
Precisely, we say that a system of $t$ distinct linear forms
$(\psi_1,\dots,\psi_t)$ has complexity at most $s$ when, 
for every $i \in [t]$, it is possible to partition
the set of forms $\{ \psi_j, j \neq i \}$ into at most
$s+1$ sets, such that $\psi_i$ does not
belong to the linear span of any of those sets.
The condition of finite complexity is then
equivalent to requiring that no two forms
of the system be linearly dependent.
By extension, we define the complexity of a matrix $V$
to be that of any parametrization 
$\psi : \Z^d \twoheadrightarrow \Z^t \cap \Ker(V)$,
this property being independent of the choice of $\psi$. 

Systems of complexity at most one
may be analyzed by methods of classical Fourier analysis, 
whereas cases of higher complexities
require much more involved techniques~\cite{Go:sz,GT:U3inv}.
We focus on the case of complexity one here,
for it is possible to derive strong quantitative bounds
in that setting, and for it may provide insight on how
to quantify results of higher complexity.
On the qualitative side, it is known that 
a translation-invariant system
of equations $V \mathbf{y} = 0$ of finite complexity
is non-trivially solvable 
in any subset of the primes
of positive upper density: this follows
from the Green-Tao theorem~\cite{GT:apsinprimes}
on arithmetic progressions in the primes,
by a simple folklore argument\footnote{
Given a system $\psi : \Z^d \rightarrow \Ker(V) \cap \Z^t$
of finite complexity, pick $u \in \Z$ so that
all the values $c_i = \psi_i(u)$ are distinct.
A simple variation of the proof of Green and Tao~\cite{GT:apsinprimes}
makes it possible to find a pattern $(x + c_1 d,\dots,x + c_t d)$
with distinct coordinates in the primes, 
and this yields a non-trivial solution
of $V \mathbf{y} = 0$ since $x + c_i d = \psi_i(x+du)$.
}.
Our main finding is that, in the case of complexity one,
quantitative bounds of the quality of Helfgott and de Roton's
may be achieved.

\begin{theorem}
\label{thm:intro:primescplx1}
	Let $V \in \M_{r \times t}(\Z)$ be a
	translation-invariant matrix of rank $r$ 
	and complexity one.
	There exists a positive constant $C$ depending
	at most on $r,t,V$ such that,
	if $A$ is a subset of $\mathcal{P}_N$
	of density at least 
	\begin{align*}
		C (\log\log N)^{-1/25t},
	\end{align*}
	there exists $\by \in A^t$ with distinct coordinates
	such that $V \by = 0$.
\end{theorem}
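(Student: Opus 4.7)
The plan is to reduce Theorem~\ref{thm:intro:primescplx1} to a density statement in a cyclic group via a W-trick and a transference principle, then apply the best quantitative count available for systems of complexity one in dense subsets of $\Z/N\Z$. First I would parametrize the kernel by a complexity-one system of forms $\psi : \Z^d \twoheadrightarrow \Z^t \cap \Ker(V)$ with $d = t - r$ and rewrite the objective as lower-bounding the multilinear average
\[
T(f_1,\dots,f_t) = \E_{\bx \in [N']^d} \sprod_i f_i(\psi_i(\bx))
\]
at $f_1 = \dots = f_t = 1_A$, where $N'$ is a scale comparable to $N$. The degenerate contribution, coming from $\bx$ for which some pair $\psi_i(\bx), \psi_j(\bx)$ coincides, lives on linear subspaces of strictly smaller dimension and is bounded trivially by the prime counting function, so it suffices to produce a main term of order $\delta^t N'^d / \log^t N'$ or better.

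Next I would perform the standard W-trick with $W = \sprod_{p \leq w} p$ for a slowly growing $w$, passing to a residue class $b + W\Z$ on which $A$ retains its density, rescaling, and embedding into $G = \Z/N''\Z$ for $N''$ a prime just above $N/W$. I would form the von Mangoldt weighting $f := 1_A \cdot (\phi(W)/W) \log$ and majorize it by a Green-Tao-type enveloping sieve $\nu$ on $G$, so that $0 \leq f \leq \nu$, $\E_G f \gg \delta$, and $\nu$ satisfies the linear forms / restriction estimates needed to control the system $\psi$. Using a dense model theorem I would decompose $f = g + h$ on $G$ with $0 \leq g \leq 1$, $\E g = \E f$ and $\|h\|_{U^2}$ quantitatively small; because $V$ has complexity one, the generalized von Neumann inequality bounds $|T(f,\dots,f) - T(g,\dots,g)|$ by a constant times $\|h\|_{U^2}$ (up to bounded powers of $\E\nu$), so the unbounded part $h$ contributes negligibly.

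Finally $g$ is a function bounded by $1$ on $G$, of density $\gg \delta$, and at this stage I would invoke a Sanders-type count for translation-invariant systems of complexity one in $\Z/N''\Z$, producing $T(g,\dots,g) \gg \exp(-C(\log 1/\delta)^C)$ provided $\delta \gg (\log N'')^{-c(V)}$. The main obstacle is the quantitative bookkeeping that yields the exponent $1/25t$: the restriction inequality for $\nu$ transfers bounds with a loss depending on its restriction exponent, the dense model step requires the $U^2$-threshold of $h$ to beat the reciprocal of the Sanders-type count, and the parameters $w$ and $N''$ must be chosen so that the compound loss does not exceed $(\log\log N)^{-1/25t}$. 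Sharpening the restriction exponent for the $W$-tricked primes (in the spirit of the refinement made by Helfgott and de~Roton for $3$-term progressions) and propagating it through the generalized von Neumann inequality for complexity one is the quantitative heart of the argument; once this is done, comparing the resulting main term to the degenerate contribution completes the proof.
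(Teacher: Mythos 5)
Your high-level architecture — $W$-trick, pseudorandom majorant, generalized von Neumann, then invoke a density result in the integers — does match the paper's plan, and you correctly flag that the restriction exponent for $W$-tricked primes and its propagation through the von Neumann inequality is where the quantitative weight lies. But there are two substantive problems that prevent this sketch from closing.

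First, taking $W = \prod_{p \leq w} p$ for a ``slowly growing $w$'' defeats the whole purpose. The Helfgott--de Roton transference, and Naslund's refinement of it, only give doubly logarithmic density thresholds because they work with a \emph{large} modulus $\omega \sim c\log N$. With a slowly growing $w$ you would land back in Green's original $(\log_4 N)^{-1/2+o(1)}$ regime. The genuine difficulty — and the main new number-theoretic content of the paper — is that the Green--Tao verification of the linear forms condition for the GPY weight $\Lambda_{\chi,R,W}$ produces an error of the form $e^{O(\sqrt{\omega})}(\log N)^{-1/20}$, which is useless once $\omega$ is as large as $c\log N$. Section~\ref{sec:gpy} of the paper reworks the Euler product estimates (Propositions~\ref{thm:gpy:localprobs}--\ref{thm:gpy:integralsrefolding}), using finite complexity to upgrade the local factor bounds and optimizing the truncation parameter $L$, precisely so that one gets a pseudorandomness level $(\log N)^{1-o(1)}$ with $\omega = c_0\log N$. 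Your proposal glosses over this as ``bookkeeping,'' but it is the crux: without it you cannot simultaneously have a large modulus and a usable linear forms condition.

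Second, the ``Sanders-type count for translation-invariant systems of complexity one'' that you invoke does not exist off the shelf. Sanders' $(\log N)^{-1+o(1)}$ bound is specific to three-term progressions; for general complexity-one systems the available input is Shao's Bourgain-style density increment, which the paper must first extend from the model system $\psi(x) = (x_0 + x_i + x_j)_{i\le j}$ to arbitrary translation-invariant complexity-one matrices, and augment with a count of pattern multiplicity (Appendix~\ref{sec:apx:intg}, culminating in Proposition~\ref{thm:intg:intgcplx1}). This extension is nontrivial — one has to handle twisted local $U^2$ norms coming from the nonunit coefficients of the normal form, which is the content of Proposition~\ref{thm:intg:untwistingU2}. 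A smaller point: the paper does not use a dense model decomposition $f = g + h$ with $g$ bounded, but rather follows Helfgott--de Roton and Naslund in directly comparing $\lambda_A$ to the Bohr-set convolution $\lambda'_A = \lambda_A \ast \mu_B$ and controlling $\|\lambda_A - \lambda'_A\|_{U^2}$ via the $\ell^q$ restriction estimate, with Naslund's $L^p$ moment bound showing the level set $\{\lambda'_A \ge \alpha/2\}$ is dense enough. This is quantitatively sharper than a generic dense model theorem and is what yields the exponent $-\gamma + \eps$ rather than something lossier.
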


Our argument also preserves the aforementioned feature
of Naslund's refinement of the Helfgott-de Roton
transference principle:
in the complexity one regime,
it converts logarithmic density bounds $(\log N)^{-\gamma}$
for Szemerédi-type theorems in the integers, 
to doubly logarithmic bounds $(\log\log N)^{-\gamma+\eps}$
for Szemerédi-type theorems in the primes.
We mention however that Theorem~\ref{thm:intro:primescplx1} 
is surpassed, in certain special cases, by results in the integers.
Indeed, an important result
of Schoen and Shkredov~\cite{SS:rothmany} states that
any single translation-invariant equation
in a least $6$ variables is non-trivially solvable
in any subset of $[N]$ of density $e^{-(\log N)^{1/6 - o(1)}}$,
and hence in $\mathcal{P}_N$, however it is not clear
whether or how that result 
extends to the case of several equations.
Furthermore, in certain ``degenerate'' cases 
where the $r \times t$ matrix $V$ 
may be subdivided into translation-invariant 
$r \times t_i$ submatrices, the system of equations 
may even be solvable at densities $N^{-c}$:
we refer to the work of Shapira~\cite{Shap:systeqs},
generalizing that of Ruzsa~\cite{Ruzsa:lineqs}, 
for precise statements.

To motivate Theorem~\ref{thm:intro:primescplx1}, 
we now give some illustrative examples
of systems of complexity one.
First, any single translation-invariant equation has
complexity one, although in that case
a simple modification of the argument
of Helfgott and de Roton~\cite{HR:rothprimes} 
yields Theorem~\ref{thm:intro:primescplx1}.
A more representative example of a system of complexity one is
that of ``$d$ points and their midpoints'',
corresponding to the set of equations
$(y_{ii} + y_{jj} = 2y_{ij})_{1 \leq i < j \leq d}$,
whose solutions over $\Q$ are parametrized, 
with some multiplicity, by\footnote{
This system is the linear part
of Example~4 from \cite[Section~1]{GT:lineqs},
composed with a certain surjection.
} $\psi(x)=(x_0 + x_i + x_j)_{1 \leq i \leq j \leq d}$.
It can be arduous in general to determine
whether a system of equations has complexity one:
Vinuesa~\cite{Vinuesa:magic} has 
determined, by an elaborate combinatorial argument,
that the system of translation-invariant equations
corresponding to magic $n \times n$ squares
has complexity one for $n \geq 4$.
For a more general discussion of the complexity one setting,
we refer to Section~\ref{sec:linalg}.

Next, we discuss the principal ideas behind
the proof of Theorem~\ref{thm:intro:primescplx1}.
The main structure of our argument follows
the transference principle, introduced by Green~\cite{G:rothprimes}
and further developped by Green and Tao~\cite{GT:apsinprimes}, 
and by which one lifts a dense subset of the primes to 
a dense subset of the integers.
More precisely, we initially follow the efficient transference
strategy of Helfgott and de Roton~\cite{HR:rothprimes},
which builds on that of Green and Tao~\cite{GT:restriction},
and we incorporate Naslund's~\cite{Nasl:rothprimes} estimates.
Denoting by $\lambda_A$ the renormalized indicator
function of a dense subset $A$ of the primes,
we therefore compare the average of $\lambda_A$ over 
$\psi$-patterns to that of 
a smoothed version $\lambda_A'$ of itself,
which behaves as a dense subset of the integers
of almost the same density.
As usual, there is a little technical subtelty in the form
of the $W$-trick, by which we
consider, instead of the set $A$,
its intersection with an arithmetic progression of modulus 
$W = \prod_{p \leq \omega} p$.
A critical feature of Helfgott and de Roton's argument~\cite{HR:rothprimes}
is then that it requires a modulus $\omega \sim c\log N$.

At this point we invoke a beautiful recent result of Shao~\cite{Shao:configs},
who improved on a first result of Dousse~\cite{Dousse:configs}, 
and generalized the logarithmic bounds of Bourgain~\cite{B:roth}
for Roth's theorem to a model system of complexity one.
More precisely, Shao~\cite{Shao:configs} investigated
the system $\psi(x)=(x_0 + x_i + x_j)_{1 \leq i \leq j \leq d}$,
and proved that a set $A$ of density $(\log N)^{-1/6d(d+1)+o(1)}$
in $[N]$ contains a non-trivial configuration $\psi(x) \in A^{d(d+1)/2}$.
As envisioned by Shao~\cite[p.~2]{Shao:configs},
his argument naturally extends to general systems of complexity one,
at the cost of adressing certain technical complications.
The first, and simplest step of our proof is therefore to formally
derive this extension, while also keeping track of the number
of pattern occurences.
Considering $\lambda'_A$ as a dense set of integers,
this extension then shows that $\lambda'_A$ has a large
pattern count.

Provided that we could prove that the difference of
pattern counts for $\lambda_A$
and $\lambda'_A$ is small, this would be enough
to conclude that the original set $A$
contains many $\psi$-configurations.
However, while the count of three-term progressions 
investigated by Helfgott and de Roton~\cite{HR:rothprimes} 
has a simple Fourier expression,
which can be controlled by restriction estimates for 
primes~\cite{GT:restriction}, 
such is not the case in general for systems of complexity one.
To address this issue, 
we bound the difference of pattern counts
via the generalized Von Neumann theorem
of Green and Tao~\cite{GT:lineqs}, which
in the complexity-one setting asserts that, 
given functions $f_1,\dots,f_t$ on $\Z_{N'}$ 
with $N' \sim CN$ majorized by a pseudorandom weight 
(a notion whose meaning shall be clear shortly), we have
\begin{align}
\label{eq:intro:genvonnm}
	\big| \E_{n \in \Z_{N'}^d} f_1 (\psi_1(n)) \dots f_t(\psi_t (n)) \big|
	\leq \| f_i \|_{U^2} + o(1)
\end{align}
as $N \rightarrow \infty$.
Properly quantified, the method of 
Green and Tao~\cite{GT:apsinprimes,GT:lineqs}
produces a $o(1)$ term of size $(\log N)^{-c}$ in the above,
however it requires a small modulus $\omega \sim c\log\log N$,
which is too expensive to apply the efficient 
transference estimates of Helgott and de Roton~\cite{HR:rothprimes}.

To majorize prime-counting functions
associated to $W$-tricked primes, 
Green and Tao use a weight $\nu : \Z_M \rightarrow \R^+$ 
constructed from a smoothly truncated convolution of the Möbius function,
whose averages where first considered by Goldston, Pintz and Yildirim~\cite{GPY:smallgaps}.
The $o(1)$-term arising in~\eqref{eq:intro:genvonnm} then depends
on the level of pseudorandomness of this weight,
and the key estimate we establish towards this is the asymptotic
\begin{align*}
	\E_{n \in \Z_{N'}^d} \nu(\theta_1(n)) \dots \nu(\theta_t(n))
	= 1 + O_{d,t,\theta}\bigg( \frac{1}{(\log N)^{1-o(1)}} \bigg),
\end{align*}
valid for every affine system 
$\theta : \Z_{N'}^d \rightarrow \Z_{N'}^t$
of finite complexity and bounded linear part,
and for a large modulus $\omega \sim c \log N$.
This corresponds to the ``linear forms condition'' 
in~\cite{GT:apsinprimes,GT:lineqs},
while we do not need the harder-to-quantify 
``correlation condition'' 
from there in our simpler setting.
Equipped with this estimate, we verify that
the functions $\lambda_A$ and $\lambda'_A$ 
used by Helfgott and de Roton are
majorized by averaged variants of $\nu$,
and we finally apply~\eqref{eq:intro:genvonnm} to bound
the difference of pattern counts.

\textbf{Remarks.}
Very recently, and while we were writing this article, 
Conlon, Fox and Zhao have completed an exposition of
the Green-Tao theorem~\cite{CFZ:GTexpo}, in which
they also revisited Green and Tao's computations on
correlations of GPY weights under
the assumption of finite complexity.
Their number-theoretic computations~\cite[Section~9]{CFZ:GTexpo} 
turn out to be quite similar to ours from Section~\ref{sec:gpy},
although our argument optimizes certain parameters further.

\textbf{Acknowledgements.}
We are grateful to our adviser Régis de la Bretèche
for valuable advice on writing.
We also wish to thank our friends
Crystel Bujold, Dimitri Dias, 
Oleksiy Klurman, Marzieh Mehdizad
for helpful discussions on many topics
of number theory.
We would further like to thank Pablo Candela,
Harald Helfgott, Neil Lyall, Eric Naslund, Hans Parshall
and Fernando Shao for interesting discussions on
problems related to this paper.

\textbf{Funding.}
This work was partially supported by
the ANR Caesar {ANR-12-BS01-0011}.

\section{Overview}
\label{sec:overv}

In this section we explain
the organization of this paper,
and how it relates to
the structure of our argument
presented in the introduction.

The preliminaries to our argument
are contained in Sections~\ref{sec:not} and~\ref{sec:linalg}.
The little notation we need is
introduced in Section~\ref{sec:not},
while Section~\ref{sec:linalg} is there
to gather (almost) all arguments 
of a linear algebraic nature
needed in the article.

With these prequisites in place,
the first logical part of our argument
is the aforementioned extension of
Shao's~\cite{Shao:configs} result,
and since it require few new ideas
we place it at the end of the article in Appendix~\ref{sec:apx:intg}.
The bulk of our proof of Theorem~\ref{thm:intro:primescplx1}
is then contained in Sections~\ref{sec:gpy}--\ref{sec:tsf}.
In Section~\ref{sec:gpy}, we carry out
the computation of correlations of the GPY weights
\begin{align*}
	\Lambda_{\chi,R,W}(n) = \Big( \frac{\phi(W)}{W} \log R \Big) 
	\bigg( \sum_{d | Wn + b} \mu(d) \chi\Big( \frac{\log d}{\log R} \Big) \bigg)^2,
\end{align*}
where $W = \prod_{p \leq \omega} p$
and $\chi$ is a certain smooth cutoff function.
We follow Green and Tao's original 
computation~\cite[Appendix~D]{GT:lineqs}, 
but we analyze the local Eulor factors involved
in more detail, in order 
to allow for a large modulus $\omega = c\log N$.
In Section~\ref{sec:rd}, we construct
a pseudorandom weight on $\nu$ over $\Z_M$
out of $\Lambda_{\chi,R,W} : \Z \rightarrow \R^+$ for a
larger scale $M \sim CN$,
taking care to preserve quantitative error terms.
We also state a quantitative version of
Green and Tao's generalized 
Von Neumann theorem~\cite[Appendix~C]{GT:lineqs}.
In Section~\ref{sec:tsf}, we prove Theorem~\ref{thm:intro:primescplx1},
by first lifting the problem to the integers
via the transference principle of 
Helfgott-de Roton~\cite{HR:rothprimes}
and the quantitative generalized Von Neumann theorem obtained earlier,
and by then applying the extension of Shao's result
derived in Appendix~\ref{sec:apx:intg}.

\section{Notation}
\label{sec:not}

We have attempted to respect most current conventions
of notation in additive combinatorics~\cite{AG:acbook} throughout, 
and therefore we keep this section to the bare minimum.

Given an integer $N$, we write $[N] = \{1,\dots,N\}$.
Given reals $x < y$, we also write  
$[x,y]_\Z = \Z \cap [x,y]$,
and we let $\mathcal{P}$ denote the set of all primes.
Given a property $\mathbf{P}$, we write 
$1( \mathbf{P} )$ for the boolean which equals $1$
when $\mathbf{P}$ is true, and $0$ otherwise.
When $X$ is a set and $\mathbf{P}_x$ is a property
depending on a variable $x \in X$, we write
\begin{align*}
	\p_{x \in X} ( \mathbf{P}_x ) 
	= |X|^{-1} \#\{ x \in X : \mathbf{P}_x \}.
\end{align*}
Given a function $f$ on $X$, we also write 
$\E_X f = \E_{x \in X} f(x) = |X|^{-1} \sum_{x \in X} f(x)$,
or simply $\E f$ when the set of averaging is clear from the context.

We make occasional use of Landau's $o$, $O$-notation and
of Vinogradov's asymptotic notations $f \ll g$, $f \gg g$, $f \asymp g$.
As is common in additive combinatorics, we also
let $c$ and $C$ denote positive constants whose value
may change at each occurence, and which are typically 
taken to be respectively very small or very large.
Unless otherwise stated, 
all implicit and explicit constants we introduce
are absolute: they do not depend on surrounding parameters.

Finally, we use several local conventions on notation,
and therefore we advise the reader to pay close
attention to the preamble of each section.

\section{Linear algebra preliminaries}
\label{sec:linalg}

In this section, we discuss the notion
of complexity of systems of linear forms,
following the very transparent exposition 
by Green and Tao in~\cite[Sections~1 and~4]{GT:lineqs},
and by Tao in~\cite{T:blognormalform}.
We also consider the simple problems of parametrizing
the kernel of a matrix corresponding to a system
of equations, and of defining an analog notion
of complexity for such a matrix.

We consider an integral domain $\bbA$,
together with its field of fractions $\bbK$;
in our article we only ever consider $\bbA = \Z$
or $\bbA = \Z_M$ with $M$ prime.
A linear form over the free module $\bbA^d$
naturally induces one over $\bbK^d$,
and accordingly all the linear algebra notions
are considered over $\bbK$.
This is somewhat overly formal, however it allows us to 
define certain notions for linear forms 
over $\Z$ and $\Z_M$ at once.
Note that throughout this article,
we consider systems of linear forms
$\psi : \bbA^d \rightarrow \bbA^t$
as formal triples $(\psi,d,t)$
to avoid repeatedly introducing
dimension parameters $d,t$.

\begin{definition}[Complexity]
\label{thm:linalg:cplxdef}
	Consider a system of linear forms
	$\psi = (\psi_1,\dots,\psi_t) : {\bbA^d \rightarrow \bbA^t}$.
	For $i \in [t]$, the complexity of $\psi$ at $i$
	is the minimal integer $s \geq 0$ 
	for which there exists a partition 
	$[t] \smallsetminus \{i\} = X_1 \sqcup \dots \sqcup X_{s+1}$ 
	into non-empty sets such that
	$\psi_i \notin \langle \psi_j : j \in X_k \rangle$ 
	for all $k \in [s+1]$, when such an integer 
	exists\footnote{
	In the special (and unimportant) case where $t=1$, we set the complexity at $i=1$ to $0$.
	}.
	Otherwise we set the complexity at $i$ to $\infty$.
	The complexity of $\psi$ is the maximum
	of the complexities of $\psi$ at $i$
	over all $i \in [t]$.
\end{definition}

We also recall the following important 
observation from~\cite[Section~1]{GT:lineqs}.

\begin{lemma}
\label{thm:linalg:fincplx}
	A system of linear forms 
	$\psi = (\psi_1,\dots,\psi_t) : \bbA^d \rightarrow \bbA^t$
	has finite complexity if and only if
	no two forms $\psi_i,\psi_j$ with $i \neq j$ are linearly dependent.
\end{lemma}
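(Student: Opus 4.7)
The plan is to prove the equivalence by establishing the contrapositive of one direction and a direct construction for the other. Both directions rest on unwinding the definition of complexity at an index $i$.

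For the implication \emph{finite complexity implies pairwise independence}, I argue by contrapositive: assume that some pair $\psi_i,\psi_j$ with $i \neq j$ is linearly dependent, and show that the complexity at $i$ is infinite. If $\psi_i = 0$, then $\psi_i$ lies in every linear subspace of $\bbK^d$, in particular in $\langle \psi_\ell : \ell \in X_k \rangle$ for every block $X_k$ of any admissible partition of $[t] \smallsetminus \{i\}$, so no valid partition exists. Otherwise $\psi_i = c\psi_j$ for some nonzero $c \in \bbK$; in any partition of $[t] \smallsetminus \{i\}$, the index $j$ belongs to some block $X_k$, and hence $\psi_i \in \langle \psi_j \rangle \subseteq \langle \psi_\ell : \ell \in X_k \rangle$. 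Again no admissible partition exists, and the complexity at $i$ is infinite, so the complexity of $\psi$ is infinite.

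For the converse, suppose no two forms $\psi_i,\psi_j$ with $i \neq j$ are linearly dependent; in particular every $\psi_i$ is nonzero, and no two are proportional. Fix $i \in [t]$. If $t = 1$, the complexity at $i$ is $0$ by convention, so assume $t \geq 2$. I take the finest possible partition of $[t] \smallsetminus \{i\}$, namely the partition into $t-1$ singletons $X_k = \{j_k\}$. For each block, $\langle \psi_\ell : \ell \in X_k \rangle = \langle \psi_{j_k} \rangle$, and the linear independence of the pair $\psi_i,\psi_{j_k}$ gives $\psi_i \notin \langle \psi_{j_k} \rangle$. Thus the complexity at $i$ is at most $t-2$, and taking the maximum over $i$ shows that $\psi$ has complexity at most $t-2 < \infty$.

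I do not expect any serious obstacle: the argument is purely a formal manipulation of the definition of complexity. The only points requiring care are the degenerate cases, namely $t = 1$ (handled by the convention built into Definition~\ref{thm:linalg:cplxdef}) and the possibility that one of the dependent forms is the zero form (which must be treated separately from the proportionality case, since the latter requires a nonzero scalar).
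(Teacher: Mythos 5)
The paper records this lemma without proof, simply citing it as an observation from Green and Tao's work; there is therefore no internal proof to compare against, but your argument is the standard one and is essentially sound in structure.

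There is, however, a small gap in your case analysis for the forward direction. You fix a linearly dependent pair $\psi_i,\psi_j$ and aim to show the complexity at $i$ is infinite, splitting into the cases $\psi_i = 0$ and $\psi_i = c\psi_j$ with $c \neq 0$. But this does not exhaust the possibilities: if $\psi_i \neq 0$ while $\psi_j = 0$, the pair is linearly dependent (since $0\cdot\psi_i + 1\cdot\psi_j = 0$), yet $\psi_i$ is not a scalar multiple of $\psi_j$, and in fact $\psi_i \notin \langle\psi_j\rangle = \{0\}$, so your argument at $i$ breaks down. The fix is immediate: linear dependence is symmetric, so in this subcase one applies your first branch to $j$ rather than $i$, showing the complexity at $j$ is infinite, which suffices to conclude that the system has infinite complexity. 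Equivalently, one could phrase the case split as \textquotedblleft at least one of $\psi_i,\psi_j$ is zero\textquotedblright\ versus \textquotedblleft both are nonzero, hence proportional with a nonzero constant.\textquotedblright\ The converse direction — taking the partition of $[t]\smallsetminus\{i\}$ into singletons and noting that pairwise independence forces every $\psi_j$ to be nonzero and not proportional to $\psi_i$ — is correct and complete, as is your treatment of the $t=1$ convention.
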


We next recall the standard notion of normal form,
and to do so we introduce a slightly 
non-standard piece of terminology.
We say that a linear form 
$\theta(x_1,\dots,x_d) = a_1 x_1 + \dots + a_d x_d$
\textit{depends} on the variable $x_k$
when $a_k \neq 0$;
we do not mean this in an exclusive sense
so that the form may also depend on other variables.
While that definition may seem mathematically akward,
it corresponds to the intuitive way to
think about explicit system of forms.

\begin{definition}[Normal form]
\label{thm:linalg:normaldef}
	A system of linear forms 
	$\psi = (\psi_1,\dots,\psi_t) : \bbA^d \rightarrow \bbA^t$
	is in \emph{exact} $s$-normal form at $i \in [t]$ when
	there exists a set of indices
	$J_i \subset [d]$ such that $|J_i| = s + 1$ and
	\begin{enumerate}
		\item $\psi_i(x_1,\dots,x_d)$ depends on all variables $x_k, k \in J_i$,
		\item for all $j \neq i$, $\psi_j(x_1,\dots,x_d)$ does not depend on all variables $x_k, k \in J_i$.
	\end{enumerate}
	We say that $\psi$ is in $s$-normal form when
	it is in exact $s_i$-normal form with $s_i \leq s$ at every $i \in [t]$.
\end{definition}

As explained in~\cite[Section~4]{GT:lineqs}, a system $\psi$
in exact $s$-normal form at $i$ has complexity at most $s$ at $i$,
and conversely one may always put a system of complexity~$s$
in $s$-normal form, up to adding a certain number of ``dummy'' variables.

\begin{proposition}[Normal extension]
\label{thm:linalg:normalextension}
	A system of linear forms 
	$\psi : \Z^d \rightarrow \Z^t$ of complexity $s$
	admits an $s$-normal extension 
	$\psi' : \Z^{d+e} \rightarrow \Z^t$
	of the form $\psi'(x,y) = \psi(x + \varphi(y))$, where 
	$\varphi : \Z^e \rightarrow \Z^d$ is a linear form.
\end{proposition}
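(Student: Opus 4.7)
The plan is to introduce, for each index $i \in [t]$ and each block of a witnessing partition of the complexity-at-$i$ condition, one dummy variable on which $\psi_i$ depends but at least one $\psi_j$ with $j \neq i$ does not; the translation $\psi(x + \varphi(y))$ then encodes these dependencies in a coherent way. The only substantive step is extracting suitable translation vectors from the complexity hypothesis, which amounts to a standard linear-duality argument over $\Q$ followed by clearing denominators; the rest is formal bookkeeping.

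First I would unpack the hypothesis. For each $i \in [t]$ of complexity $s_i \leq s$ at $i$, I fix a partition $[t] \smallsetminus \{i\} = X_{i,1} \sqcup \dots \sqcup X_{i,s_i+1}$ into non-empty sets such that $\psi_i \notin \langle \psi_j : j \in X_{i,k}\rangle$ for each $k \in [s_i+1]$. Viewing the $\psi_j$ as $\Q$-linear forms on $\Q^d$, this non-containment yields, by standard duality, a vector $w_{i,k} \in \Q^d$ with $\psi_j(w_{i,k}) = 0$ for every $j \in X_{i,k}$ and $\psi_i(w_{i,k}) \neq 0$. Clearing denominators then provides $v_{i,k} \in \Z^d$ with the same two properties.

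Next I would assemble the translation. Let $e = \sum_{i \in [t]} (s_i + 1)$ and index the coordinates of $\Z^e$ by pairs $(i,k)$. Define $\varphi : \Z^e \to \Z^d$ by $\varphi(y) = \sum_{i,k} v_{i,k}\, y_{i,k}$ and set $\psi'(x,y) = \psi(x + \varphi(y))$. Then $\psi'_j(x,y) = \psi_j(x) + \sum_{i,k} \psi_j(v_{i,k})\, y_{i,k}$, so the coefficient of $y_{i,k}$ in $\psi'_j$ is exactly $\psi_j(v_{i,k})$.

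Finally I would verify the normal form condition. For each $i \in [t]$, take $J_i$ to be the set of indices of the new variables $y_{i,k}$ for $k \in [s_i+1]$, which has cardinality $s_i + 1 \leq s + 1$. Condition (i) of Definition~\ref{thm:linalg:normaldef} holds because $\psi'_i$ has non-zero coefficient $\psi_i(v_{i,k})$ on each $y_{i,k}$. For condition (ii), given $j \neq i$, the partition places $j$ in some block $X_{i,k}$, and then the coefficient $\psi_j(v_{i,k})$ of $y_{i,k}$ in $\psi'_j$ vanishes, so $\psi'_j$ fails to depend on all of $J_i$. Hence $\psi'$ is in exact $s_i$-normal form at each $i$, and therefore in $s$-normal form overall.
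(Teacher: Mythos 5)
Your proof is correct and matches the standard construction that the paper defers to (Green and Tao, \emph{Linear equations in primes}, Section~4): for each index $i$ and each block $X_{i,k}$ of a witnessing partition, one introduces a dummy variable whose translation vector $v_{i,k}$ lies in $\bigcap_{j \in X_{i,k}} \Ker \psi_j$ but not in $\Ker \psi_i$ (which exists by duality precisely because $\psi_i \notin \langle \psi_j : j \in X_{i,k}\rangle$), and the set $J_i$ of new variable indices then witnesses exact $s_i$-normal form at $i$. The verification of conditions (i) and (ii) is carried out correctly, and the clearing-of-denominators step legitimately moves the construction from $\Q^d$ to $\Z^d$.
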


We will also have the occasion to consider systems 
of affine-linear forms,
often abbreviated as ``affine systems'' 
throughout the article. 
Consistently with~\cite{GT:lineqs}, 
we write an affine system $\psi$
as $\psi = \psi(0) + \dot{\psi}$, 
where $\dot{\psi}$ is the linear part of $\psi$, 
and we extend previous definitions 
by declaring $\psi$ to be of complexity $s$ 
or in $s$-normal form when its linear part is.
We also need to consider reductions
of forms modulo a large prime $M$ later on,
in which case we need to keep track of the size
of the coefficients of the forms involved.

\begin{definition}[Form and matrix norms]
\label{thm:linalg:norms}
	Suppose that $\psi = (\psi_1,\dots,\psi_t) : \bbA^d \rightarrow \bbA^t$
	is an affine system, and write
	$\psi_i(x_1,\dots,x_d) = a_{i1} x_1 + \dots + a_{id} x_d + b_i$ 
	for every $i \in [t]$.
	When $\bbA = \Z$ and $M \geq 1$, we define
	\begin{align*}
		\| \psi \|_{M} = \sum_{i \in [t]} \sum_{j \in [d]} |a_{ij}| 
		+ \sum_{i \in [t]} (|b_i|/M),
	\end{align*}
	and we simply write $\| \psi \|$ when all $b_i$ are zero.
	When $\bbA = \Z_M$, we define
	\begin{align*}
		\| \psi \| = \sum_{i \in [t]} \sum_{j \in [d]} \| a_{ij} \|_{\T_M}
		+ \sum_{i \in [t]} \| b_i / M \|_{\T}
	\end{align*}
	where $\| \cdot \|_{\T_L} = d( \cdot, L\Z )$.
	Finally, for 
	$V = [\lambda_{ij}] \in \M_{r \times t}(\Z)$,
	we write $\| V \| = \sum_{i,j} |\lambda_{ij}|$ .
\end{definition}

We now return to our main topic of interest,
that is, translation-invariant equations in the integers.
As for systems of forms, we consider matrices
$V \in \M_{r \times t}(\Z)$ as formal triples $(V,r,t)$.

\begin{definition}
\label{thm:linalg:tslinvmatrix}
	We say that $V = [a_{ij}] \in \M_{r \times t}(\Z)$
	is translation-invariant when
	\begin{align*}
		a_{i1} + \dots + a_{it} = 0 \quad \forall i \in [r].
	\end{align*}
\end{definition}

Given a matrix $V \in \M_{r \times t}(\Z)$ 
corresponding to a system
of equations $V \by = 0$, we now define
the complexity of $V$ at an indice $i \in [t]$,
and its global complexity, to be that of any system of linear forms 
$\psi : \Q^d \twoheadrightarrow \Ker(V)$.
The following proposition ensures that
such a definition does not depend on the 
choice of parametrization $\psi$.

\begin{proposition}[Matrix complexity criterion]
\label{thm:linalg:matrixcplx}
	Consider a matrix $V \in \M_{r \times t}(\Z)$
	with lines $L_1,\dots,L_r$ and $t \geq 2$,
	and a system of linear forms
	$\psi : \Q^d \twoheadrightarrow \Ker(V)$.
	Then $\psi$ has complexity at most $s_0$ at $i \in [t]$
	if and only if there exists $0 \leq s \leq s_0$ and a partition
	$[t] \smallsetminus \{i\} = X_1 \sqcup \dots \sqcup X_{s+1}$ 
	into non-empty sets such that, for every $k \in [s+1]$,
	\begin{align*}
		\big( e_i + \textstyle\sum_{j \in X_k} \Q e_j \big) \cap
		\langle \transp{L_1} , \dots , \transp{L_r} \rangle
		= \varnothing,
	\end{align*}
	where $(e_i)_{1 \leq i \leq t}$ is the canonical basis of $\Q^t$.
\end{proposition}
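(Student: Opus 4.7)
The plan is to translate the combinatorial condition defining complexity at $i$ (an assertion about linear (in)dependence among the forms $\psi_1,\dots,\psi_t$) into the dual statement about the row span of $V$, using the exact correspondence between linear relations among the $\psi_j$ and elements of that row span. Once this dictionary is in place, the claimed equivalence is a direct unpacking of Definition~\ref{thm:linalg:cplxdef}.

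First I would establish the following duality lemma: for any $v = (v_1,\dots,v_t) \in \Q^t$,
\begin{align*}
    \sum_{j=1}^t v_j \psi_j = 0 \text{ as a form on } \Q^d
    \quad \Longleftrightarrow \quad
    v \in \langle \transp{L_1},\dots,\transp{L_r}\rangle.
\end{align*}
This is because $\sum_j v_j \psi_j = 0$ on $\Q^d$ is equivalent, via the surjectivity of $\psi$, to $v \cdot y = 0$ for all $y \in \Ker(V)$, i.e.\ to $v \in (\Ker V)^\perp$ under the standard pairing on $\Q^t$; and $(\Ker V)^\perp$ equals the row span of $V$, which as a subspace of $\Q^t$ is exactly $\langle \transp{L_1},\dots,\transp{L_r}\rangle$.

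Next I would use the lemma to rewrite, for any $X \subset [t] \smallsetminus \{i\}$, the statement $\psi_i \in \langle \psi_j : j \in X\rangle$. Such a relation means that for some coefficients $(\alpha_j)_{j \in X} \subset \Q$, one has $\psi_i - \sum_{j \in X} \alpha_j \psi_j = 0$, which by the lemma is equivalent to $e_i - \sum_{j \in X} \alpha_j e_j \in \langle \transp{L_1},\dots,\transp{L_r}\rangle$. Taking the negation and the union over all choices of coefficients gives the equivalence
\begin{align*}
    \psi_i \notin \langle \psi_j : j \in X\rangle
    \quad \Longleftrightarrow \quad
    \big(e_i + \textstyle\sum_{j \in X} \Q e_j\big) \cap \langle \transp{L_1},\dots,\transp{L_r}\rangle = \varnothing.
\end{align*}

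Finally I would apply this equivalence to each block of a partition $[t] \smallsetminus \{i\} = X_1 \sqcup \dots \sqcup X_{s+1}$, which immediately yields that $\psi$ has complexity at most $s_0$ at $i$ (in the sense of Definition~\ref{thm:linalg:cplxdef}) if and only if such a partition with $s \leq s_0$ exists satisfying the stated intersection condition for every $k \in [s+1]$. The only step requiring any care is the duality lemma, where one must be precise that $\psi$ surjects onto $\Ker V$ (not just lands inside it), so that the row span of $V$ captures all linear dependencies among the $\psi_j$, not merely a subspace of them; once that is spelled out, the rest is bookkeeping.
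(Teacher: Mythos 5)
Your proposal is correct and takes essentially the same approach as the paper: both rewrite the assertion $\psi_i \notin \langle \psi_j : j \in X_k\rangle$ as a non-intersection condition against $\Ker(V)^\perp$, use the surjectivity of $\psi$ onto $\Ker(V)$ to identify linear dependencies among the $\psi_j$ with vectors orthogonal to $\Ker(V)$, and then identify $\Ker(V)^\perp$ with the row span $\langle \transp{L_1},\dots,\transp{L_r}\rangle$ by double orthogonality. Your packaging of the first two steps as a standalone duality lemma is a cosmetic reorganization, and your explicit remark that surjectivity of $\psi$ is the crucial hypothesis is exactly the point the paper implicitly relies on.
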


\begin{proof}
Consider $i \in [t]$ and a partition
$[t] \smallsetminus \{i\} = X_1 \sqcup \dots X_{s+1}$
into non-empty sets.
For any $k \in [s+1]$ and
$\lambda \in \Q^{X_k}$,
we have an equivalence
\begin{align*}
	&\psi_i + \textstyle\sum_{j \in X_k} \lambda_j \psi_j = 0 
	\\
	\Leftrightarrow\quad
	&x_i + \textstyle\sum_{j \in X_k} \lambda_j x_j = 0 
	\ \text{for all $x \in \Ker(V)$} \\
	\Leftrightarrow\quad
	&e_i + \textstyle\sum_{j \in X_k} \lambda_j e_j \in \Ker(V)^{\bot}.
\end{align*}
Furthermore, by orthogonality in $\Q^t$, 
\begin{align*}
	\Ker(V)^{\bot} 
	= \big( \langle \,\transp{L_1} , \dots , \transp{L_t} \rangle^{\bot} \big)^{\bot}
	= \langle \,\transp{L_1} , \dots , \transp{L_r} \rangle.
\end{align*}
Therefore $\psi_i \in \langle \psi_j , j \in X_k \rangle$
if and only if there exists $\lambda \in \Q^{X_k}$
such that $e_i + \sum_j \lambda_j e_j \in \langle \, \transp{L_1} , \dots , \transp{L_r} \rangle$.
The proposition follows by considering the contrapositive.
\end{proof}

We shall have the occasion to work 
with two standard types
of parametrizations for the integer kernel
of a translation-invariant matrix.
The first is the usual normal form,
which is useful when working with primes,
while the second has an added shift variable,
which is useful for the regularity computations 
of Appendix~\ref{sec:apx:intg}.
In both cases, it is critical to 
work with a base parametrization $\psi$ in normal form,
in order to bound averages over
patterns $(\psi_1(n),\dots,\psi_t(n))$
by a certain Gowers norm
(see Propositions~\ref{thm:rd:genvonnm}
and~\ref{thm:intg:largeTtolargeU2} below).

\begin{proposition}[Kernel parametrization]
\label{thm:linalg:kernelparam}
	Suppose that $V \in \M_{r \times t}(\Z)$
	is a translation-invariant matrix
	of rank $r$ and complexity at most $s$.
	Then there exists a linear surjection
	\begin{align*}
		\psi : \Z^d \twoheadrightarrow \Z^t \cap \Ker(V)
	\end{align*}
	in $s$-normal form.
	An alternate linear surjection is then given by
	\begin{align*}
		\varphi : \Z^{d+1} \twoheadrightarrow \Z^t \cap \Ker(V),
	\end{align*}
	where $\varphi$ is defined by
	$\varphi_i(x_0,x) = x_0 + \psi_i(x)$ for 
	every $i \in [t]$ and $(x_0,x) \in \Z \times \Z^d$.
\end{proposition}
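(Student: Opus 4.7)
The plan is to construct a $\Z$-basis of the kernel lattice $\Z^t \cap \Ker(V)$, upgrade the resulting base parametrization to $s$-normal form via Proposition~\ref{thm:linalg:normalextension}, and then check that the further shift by $x_0 \mathbf{1}$ does not affect surjectivity.

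First, since $V \in \M_{r\times t}(\Z)$ has rank $r$, Smith normal form shows that $\Z^t \cap \Ker(V)$ is a free $\Z$-module of rank $t-r$. Choosing any $\Z$-basis $v_1,\dots,v_{t-r}$ of this lattice, the map $\psi_0 : \Z^{t-r} \to \Z^t \cap \Ker(V)$ defined by $\psi_0(x) = \sum_i x_i v_i$ is a $\Z$-linear bijection. Extending scalars to $\Q$ gives a surjection $\Q^{t-r} \twoheadrightarrow \Ker(V)$, so by the definition of matrix complexity adopted in this section, $\psi_0$ has complexity at most $s$.

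Next, I would apply Proposition~\ref{thm:linalg:normalextension} to $\psi_0$. This produces an $s$-normal extension $\psi : \Z^d \to \Z^t$, with $d = (t-r) + e$ for some $e \geq 0$, of the form $\psi(x,y) = \psi_0(x + \varphi_0(y))$ for some linear $\varphi_0 : \Z^e \to \Z^{t-r}$. Setting $y = 0$ already realizes every element in the image of $\psi_0$, so the image of $\psi$ remains exactly $\Z^t \cap \Ker(V)$ while the target normal-form property is now met.

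Finally, translation-invariance of $V$ means each row has coefficients summing to zero, so $V\mathbf{1} = 0$ for $\mathbf{1} = (1,\dots,1) \in \Z^t$; hence $\mathbf{1} \in \Z^t \cap \Ker(V)$. The map $\varphi(x_0,x) = x_0 \mathbf{1} + \psi(x)$ therefore takes values in $\Z^t \cap \Ker(V)$, and surjectivity follows trivially by restricting to $x_0 = 0$ and using surjectivity of $\psi$. There is no serious obstacle here: the proposition is essentially a repackaging of the Normal Extension Proposition together with the standard fact that the integer kernel of a rank-$r$ integer matrix in $t$ variables is a lattice of rank $t-r$. The one point worth checking is that the normal extension neither enlarges nor shrinks the image of the base parametrization, which is transparent from the explicit shape $\psi(x,y) = \psi_0(x + \varphi_0(y))$.
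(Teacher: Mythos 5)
Your proof is correct and follows essentially the same route as the paper: parametrize the rank-$(t-r)$ lattice $\Z^t\cap\Ker(V)$, upgrade to $s$-normal form via Proposition~\ref{thm:linalg:normalextension} noting that the extension preserves the image, and use $V\mathbf{1}=0$ (translation-invariance) to tack on the shift variable. The only cosmetic difference is that you invoke Smith normal form where the paper simply clears denominators, and you spell out more explicitly why the normal extension does not change the image.
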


\begin{proof}
The set $\Z^t \cap \Ker(V)$ is a lattice which is easily seen
to be of rank $t-r$ (e.g.~by first solving $V \by = 0$ over $\Q$,
then clearing denominators),
so that there exists a linear isomorphism
$\psi : \Z^{t-r} \isom \Z^t  \cap \Ker(V)$
of complexity at most $s$.
Since extensions in the sense of
Proposition~\ref{thm:linalg:normalextension} preserve the
image of a form, we may choose an alternate linear parametrization
$\psi' : \Z^d \isom \Z^t \cap \Ker(V)$ in $s$-normal form
for a certain $d \geq t-r$.

Since the matrix $V$ is translation-invariant, we have
$V \mathbf{1} = 0 $, where $\mathbf{1} = (1,\dots,1)$.
Therefore we may define another surjection
$\varphi: \Z \times \Z^d \twoheadrightarrow \Z^t \cap \Ker(V)$ by
$\varphi (x_0,x) = x_0 \mathbf{1} + \psi'(x)$.
\end{proof}

Note that a system of linear forms 
$\psi : \Z^d \rightarrow \Z^t$ 
in $1$-normal form is, at every position $i \in [t]$, 
either in exact {$0$-normal} form
or in exact {$1$-normal} form.
In practice we can always 
eliminate the first possibility,
and while not of fundamental importance,
this fact allows us to simplify our argument
in some places.

\begin{proposition}
\label{thm:linalg:exact1normalform}
	Suppose that $V \in \M_{r \times t}(\Z)$ 
	is a matrix of complexity one with no zero 
	columns and $t \geq 3$, 
	and $\psi : \Z^d \twoheadrightarrow \Z^t \cap \Ker(V)$
	is a system of linear forms
	in $1$-normal form.
	Then $\psi$ is in exact $1$-normal form
	at every $i \in [t]$.
\end{proposition}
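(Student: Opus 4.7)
The plan is to argue by contradiction. Suppose $\psi$ is in exact $0$-normal form at some $i \in [t]$, so there is an index $k \in [d]$ with $J_i = \{k\}$ such that $\psi_i$ depends on $x_k$ while $\psi_j$ does not depend on $x_k$ for every $j \neq i$. Writing the coefficients as $\psi_j(x) = \sum_\ell a_{j\ell} x_\ell$, this means $a_{ik} \neq 0$ and $a_{jk} = 0$ for all $j \neq i$.

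The key idea is then to restrict $\psi$ to the coordinate axis spanned by $e_k$. Setting $x_\ell = 0$ for all $\ell \neq k$, every form $\psi_j$ with $j \neq i$ evaluates to $0$, whereas $\psi_i$ evaluates to $a_{ik} x_k$. Consequently
\begin{align*}
	\psi(0,\dots,0,x_k,0,\dots,0) = a_{ik} x_k \, e_i
\end{align*}
for every $x_k \in \Z$, where $e_i$ denotes the $i$-th vector of the canonical basis of $\Z^t$. Since the image of $\psi$ lies in $\Ker(V)$, taking $x_k = 1$ gives $a_{ik} \, V e_i = 0$, and because $a_{ik} \neq 0$ in the integral domain $\Z$, this forces $V e_i = 0$: the $i$-th column of $V$ vanishes, contradicting the no-zero-column hypothesis.

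The main (and essentially only) observation needed is thus the passage from the $0$-normal form condition at $i$ to the algebraic fact that $e_i$ lies in $\Ker(V)$, and the proof is otherwise immediate. The hypothesis $t \geq 3$ plays no role in this particular reduction beyond ensuring that the complexity-one condition is non-vacuous for $\psi$ at $i$, since a partition of $[t] \smallsetminus \{i\}$ into two non-empty parts is needed for Definition~\ref{thm:linalg:cplxdef} to apply with $s=1$.
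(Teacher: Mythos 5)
Your proof is correct and takes a genuinely different, more elementary route than the paper's. The paper reduces the statement to its Matrix Complexity Criterion (Proposition~\ref{thm:linalg:matrixcplx}), which is a duality argument: it shows via orthogonality of $\Ker(V)$ with the row span that $\psi_i \in \langle \psi_j : j \neq i\rangle$ holds if and only if the $i$-th column $C_i$ of $V$ is non-zero, then observes that exact $0$-normal form at $i$ would force $\psi_i \notin \langle \psi_j : j \neq i\rangle$. You instead bypass the dual picture entirely: you evaluate $\psi$ directly at the basis vector $e_k$ of $\Z^d$ singled out by the $0$-normal form condition, obtaining $\psi(e_k) = a_{ik} e_i \in \Ker(V)$ and hence $Ve_i = 0$, i.e.\ the $i$-th column vanishes. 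This is shorter and more concrete, and it makes transparent that the hypothesis actually doing the work is ``$\psi$ is in $1$-normal form with image in $\Ker(V)$'' together with ``no zero columns''; the complexity-one assumption on $V$ is not needed for the deduction itself (the paper also only uses it implicitly through the $1$-normal form hypothesis). Your remark on the role of $t\geq 3$ is tangential but harmless.
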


\begin{proof}
Let $C_1,\dots,C_t$ denote the columns of $V$
and consider an indice $i \in [t]$.
By Proposition~\ref{thm:linalg:matrixcplx},
the statement that $\psi$ has non-zero complexity 
at $i$ is equivalent to
\begin{align*}
	\psi_i \in \langle \psi_j , j \neq i \rangle
	&\Leftrightarrow
	\big( e_i + \textstyle\sum_{j \neq i} \Q e_j \big) \cap 
	\langle \transp{L_1} , \dots , \transp{L_r} \rangle \neq \varnothing \\
	&\Leftrightarrow
	\exists \mu \in \Q^r : \textstyle\sum_{j = 1}^r \mu_j \transp{L_j} \cdot e_i = 1 \\
	&\Leftrightarrow
	\exists \mu \in \Q^r : \mu \cdot C_i \neq 0,
\end{align*}
and this last condition is satisfied if and only if $C_i$ is non-zero.
Since $\psi$ may have complexity only zero or one,
this concludes the proof under our assumption on the matrix.
\end{proof}

By similar orthogonality considerations, one can
establish that a matrix has complexity at most one
if and only if when any of its columns is excluded,
the set of remaining columns may be partitioned
into two classes, in a way that the excluded column
belongs to the linear span of each class.
This provides a concrete criterion,
which overlaps strongly with a set of conditions
designed by Roth~\cite{R:systeqs} and resurfacing in work of 
Liu, Spencer and Zhao~\cite{LSZ:rothfunction,LSZ:rothabelian},
but we do not dwelve on this relationship here.
One more simple fact we require about
(translation-invariant) systems of equations
is a bound on the number of integer solutions
with two equal coordinates in a box.

\begin{lemma}[Number of degenerate solutions]
\label{thm:linalg:trivsols}
	Suppose that $V \in \M_{r \times t}(\Z)$
	has rank $r$ and finite complexity, and let $i,j$
	be two distinct indices in $[t]$.
	Then 
	\begin{align*}
		\#\{ y \in [-N,N]_\Z^t : Vy = 0 \ \text{and}\ y_i = y_j \} \ll_V N^{t - r - 1}.
	\end{align*}
\end{lemma}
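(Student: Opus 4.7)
The plan is to reduce the problem to a lattice-point-counting estimate, after showing that the equation $y_i = y_j$ is genuinely independent from the equations encoded in $V$.

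I would first invoke Proposition~\ref{thm:linalg:kernelparam} to obtain a linear parametrization $\psi : \Z^{t-r} \twoheadrightarrow \Z^t \cap \Ker(V)$, which by rank considerations is a bijection. Under this parametrization, a solution $y \in \Z^t$ with $Vy = 0$ and $y_i = y_j$ corresponds bijectively to a vector $n \in \Z^{t-r}$ satisfying $(\psi_i - \psi_j)(n) = 0$. The key point is that the linear form $\psi_i - \psi_j$ is non-zero: indeed, by Lemma~\ref{thm:linalg:fincplx}, the finite-complexity assumption on $V$ forces $\psi_i$ and $\psi_j$ to be linearly independent over $\Q$, so in particular they cannot be equal.

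Thus the set $\Lambda = \{ n \in \Z^{t-r} : (\psi_i - \psi_j)(n) = 0 \}$ is the kernel of a non-trivial linear form on $\Z^{t-r}$, hence a sublattice of rank $t - r - 1$. To conclude, I bound the number of $n \in \Lambda$ such that $\psi(n) \in [-N,N]_\Z^t$. Since $\psi$ is linear and injective, there is a constant $C_V$ (depending only on the coefficients of $\psi$, and ultimately on $V$) such that $\psi(n) \in [-N,N]_\Z^t$ implies $\| n \|_\infty \leq C_V N$. A standard lattice-counting estimate then gives that a rank-$(t-r-1)$ sublattice of $\Z^{t-r}$ has $O_V(N^{t-r-1})$ points in the box of side $C_V N$, which yields the claimed bound.

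I do not anticipate a serious obstacle: the only genuine content is the observation that the hypothesis of finite complexity prevents $\psi_i - \psi_j$ from vanishing identically, after which everything reduces to elementary lattice geometry. The constant is allowed to depend on $V$, so we need not track its precise form in terms of $\psi$.
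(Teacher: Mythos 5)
Your proof is correct and follows essentially the same line as the paper's: both arguments identify the constraint $y_i = y_j$ as a genuinely new linear condition on $\Ker(V)$ (using finite complexity to rule out $\psi_i = \psi_j$), reducing the degenerate solutions to a $(t-r-1)$-dimensional lattice whose points in a box of side $O(N)$ number $O_V(N^{t-r-1})$. The paper works intrinsically with $\Ker_{\Q}(V) \cap \{y_i = y_j\}$ while you pull back through the parametrization $\psi$, but that is merely a change of coordinates.

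One small caution: Proposition~\ref{thm:linalg:kernelparam} is stated for \emph{translation-invariant} matrices, whereas the present lemma makes no such hypothesis, so you cannot quote it verbatim. What you actually need is only the elementary fact from the first sentence of its proof — that $\Z^t \cap \Ker(V)$ is a rank-$(t-r)$ lattice, hence admits a $\Z$-linear isomorphism $\psi : \Z^{t-r} \isom \Z^t \cap \Ker(V)$ — and that fact does not use translation-invariance at all. Citing that observation directly, rather than the proposition wholesale, closes the gap.
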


\begin{proof}
Consider the hyperplane
$H = \{ y \in \Q^t : y_i = y_j \}$.
The subspace $\Ker(V) \cap H$ of $\Q^t$
has dimension less than $t-r-1$,
since $\Ker(V)$ is not contained in $H$:
indeed if this were the case,
there would exist a parametrization
$\psi : \Z^d \twoheadrightarrow \Z^t \cap \Ker(V)$ 
with $\psi_i = \psi_j$,
contradicting the assumption of finite complexity.
The bound then follows by simple 
linear algebraic considerations. 
\end{proof}

Finally, we collect together some facts about
the preservation of certain properties of affine systems under the
operations of reduction modulo $M$ or lifting from $\Z_M$ to $\Z$.
We omit the proofs, which are accessible
by simple linear algebra.

\begin{fact}
\label{thm:linalg:reducenormal}
	Suppose that $V \in \M_{r \times t}(\Z)$ is a translation-invariant matrix
	of rank $r$ and $\psi : \Z^d \twoheadrightarrow \Z^t \cap \Ker_\Q(V)$ 
	is a system of linear forms in exact $s_i$-normal form over $\Z$
	at every $i \in [t]$.
	Provided that $M > \max( t! \| \psi \|^t, r! \|V\|^r)$,
	$\psi$ reduces modulo $M$ to 
	a system of linear forms 
	$\theta : \Z_M^d \twoheadrightarrow \Ker_{\Z_M} (V)$	
	is in exact $s_i$-normal form over $\Z_M$ at every $i \in [t]$,
	and such that $\| \theta \| = \| \psi \|$.
\end{fact}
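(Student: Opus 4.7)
The plan is to verify three claims: (a) that $\|\theta\|=\|\psi\|$, (b) that $\theta$ is in exact $s_i$-normal form over $\Z_M$ at each $i\in[t]$, and (c) that $\theta$ surjects onto $\Ker_{\Z_M}(V)$.

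Claims (a) and (b) are essentially bookkeeping. In the only nontrivial range $t\geq 2$, the hypothesis $M>t!\|\psi\|^t$ combined with the integrality $\|\psi\|\geq 1$ forces $2\|\psi\|<M$, so each integer coefficient $a_{ij}$ of $\psi$ lies in the interval $(-M/2,M/2)$. Consequently $\|a_{ij}\|_{\T_M}=|a_{ij}|$, and summing over $i,j$ yields (a). For (b), the same bound ensures that a nonzero integer coefficient $a_{ij}$ reduces to a nonzero element of $\Z_M$, while zero coefficients remain zero; hence the pattern of variables on which each form $\psi_i$ depends is preserved verbatim, and the same index sets $J_i\subset[d]$ witness the exact $s_i$-normal form condition over $\Z_M$.

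Claim (c) is the substantive part. The inclusion $\theta(\Z_M^d)\subset\Ker_{\Z_M}(V)$ is immediate from $V\psi=0$ over $\Z$. For the reverse inclusion, set $\Lambda=\Z^t\cap\Ker_\Q(V)$, so that $\theta(\Z_M^d)=(\Lambda+M\Z^t)/M\Z^t$ and $\Ker_{\Z_M}(V)=\{y\in\Z^t: Vy\in M\Z^r\}/M\Z^t$; matters then reduce to the lattice identity
\begin{align*}
	\Lambda+M\Z^t=\{y\in\Z^t: Vy\in M\Z^r\}.
\end{align*}
To establish this, invoke a Smith normal form $V=PDQ$ with $P\in GL_r(\Z)$, $Q\in GL_t(\Z)$ and $D=[\mathrm{diag}(d_1,\dots,d_r)\,|\,0]\in\M_{r\times t}(\Z)$, where $d_1\,|\,\cdots\,|\,d_r$ are the invariant factors of $V$. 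A direct computation shows that, provided $\gcd(M,d_r)=1$, both lattices in the identity coincide with $Q^{-1}(M\Z^r\times\Z^{t-r})$. Now $d_r$ divides every $r\times r$ minor of $V$, and each such minor is bounded in absolute value by $r!\|V\|^r$ upon expanding the determinant; hence $d_r$ is a positive integer strictly less than $M$, and since $M$ is prime, this forces $\gcd(M,d_r)=1$.

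The main technical step is the Smith normal form computation, which also clarifies the role of the bound $M>r!\|V\|^r$: it guarantees that no invariant factor of $V$ is divisible by $M$, so that reduction modulo $M$ does not introduce spurious ``mod $M$'' solutions failing to come from lifts in $\Lambda$. Everything else is a direct transcription of definitions.
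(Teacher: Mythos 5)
Your proof is correct. The paper itself omits the argument, remarking only that this Fact (and Fact~\ref{thm:linalg:liftfincplx}) is ``accessible by simple linear algebra,'' so there is no written proof to compare against; the reduction of the surjectivity claim to the lattice identity $\Lambda+M\Z^t=\{y\in\Z^t:Vy\in M\Z^r\}$ and its verification via Smith normal form $V=PDQ$ (both sides coinciding with $Q^{-1}(M\Z^r\times\Z^{t-r})$) is a clean way to proceed, and it correctly isolates the role of the hypothesis $M>r!\|V\|^r$ as guaranteeing that $M$ is coprime to every invariant factor $d_i$. One remark worth flagging: your final step invokes the primality of $M$, which is not among the stated hypotheses of the Fact but is a standing convention of Section~\ref{sec:linalg}, where $\bbA=\Z_M$ is only ever considered with $M$ prime. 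This is not a mere convenience --- if $M$ shares a common factor with some $d_i$, then $\{y\in\Z^t:Vy\in M\Z^r\}$ strictly contains $\Lambda+M\Z^t$ and surjectivity fails --- so a self-contained write-up should record the dependence on primality explicitly.
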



\begin{fact}
\label{thm:linalg:liftfincplx}
	Suppose that $\theta : \Z_M^d \rightarrow \Z_M^t$ is
	an affine system of finite complexity over $\Z_M$,
	and $M > 2 \| \dot{\theta} \|$.
	Then $\theta$ is the reduction
	modulo $M$ of an affine system $\psi : \Z^d \rightarrow \Z^t$
	of finite complexity over $\Z$ and
	such that $\| \psi \|_M = \| \theta \|$,
	$\| \dot{\psi} \| = \| \dot{\theta} \|$.
\end{fact}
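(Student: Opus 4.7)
The plan is to lift each coefficient of $\theta$ to its representative of smallest absolute value modulo $M$, and then verify the three required conclusions (consistency of reduction, norm equalities, finite complexity) one at a time. Write $\theta_i(x) = a_{i1} x_1 + \dots + a_{id} x_d + b_i$ with $a_{ij}, b_i \in \Z_M$; for each pair $i,j$ let $\tilde a_{ij} \in \Z$ be the unique integer in $(-M/2, M/2]$ congruent to $a_{ij}$ modulo $M$, and similarly pick $\tilde b_i$, then set $\psi_i(x) = \tilde a_{i1} x_1 + \dots + \tilde a_{id} x_d + \tilde b_i$. By the very definition of the torus distances we have $|\tilde a_{ij}| = \|a_{ij}\|_{\T_M}$ and $|\tilde b_i|/M = \|b_i/M\|_{\T}$, so summing gives $\|\dot\psi\| = \|\dot\theta\|$ and $\|\psi\|_M = \|\theta\|$, while $\psi$ reduces to $\theta$ by construction.

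The substantive step is to show that $\psi$ has finite complexity over $\Z$; by Lemma~\ref{thm:linalg:fincplx} applied to $\dot\psi$, this amounts to verifying $\Q$-linear independence of each pair $\dot\psi_i, \dot\psi_j$ with $i \neq j$. I would argue by contradiction: suppose $p \dot\psi_i + q \dot\psi_j = 0$ for some coprime $(p,q) \in \Z^2 \smallsetminus \{(0,0)\}$ (clearing denominators from any $\Q$-relation). Reducing coefficient-wise yields $\bar p \dot\theta_i + \bar q \dot\theta_j = 0$ in the $\Z_M$-module of linear forms. Coprimality of $p, q$ together with $M \geq 2$ rules out $(\bar p, \bar q) = (0,0)$, so this relation is non-trivial in $\Z_M^2$. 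When both $\bar p, \bar q$ are non-zero we obtain direct $\Z_M$-dependence of $\dot\theta_i, \dot\theta_j$; when, say, $\bar p = 0$, then $\bar q \dot\theta_j = 0$ combined with $\bar q \neq 0$ and the fact that $\Z_M$ is a field forces $\dot\theta_j = 0$, and any pair containing the zero form is dependent. Either way we contradict the finite complexity of $\theta$ over $\Z_M$.

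The hypothesis $M > 2\|\dot\theta\|$ is not needed for the argument as sketched and can be treated as a harmless technical precondition maintaining consistency with the rest of the framework (and in particular with Fact~\ref{thm:linalg:reducenormal}, which goes in the opposite direction). The only genuine subtlety I anticipate is the case analysis when $\bar p$ or $\bar q$ vanishes, where one has to exploit the primality of $M$ to pass from $\bar q \dot\theta_j = 0$ to $\dot\theta_j = 0$ and then invoke the convention that a pair containing the zero form is linearly dependent; everything else is routine bookkeeping.
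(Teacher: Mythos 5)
Your proof is correct, and since the paper omits the argument with the remark that it is ``accessible by simple linear algebra,'' your lift-to-smallest-representative construction and reduction-mod-$M$ contrapositive are surely the intended route. The one point worth highlighting is your observation that $M > 2\|\dot\theta\|$ plays no role: you avoid needing it by arranging $(p,q)$ to be coprime (so $(\bar p,\bar q)\neq(0,0)$ automatically) and by exploiting that $\Z_M$ is a field to handle the degenerate case $\bar p = 0$ or $\bar q=0$, which is exactly right; the hypothesis is harmless and likely retained by the author for uniformity with the companion Fact~\ref{thm:linalg:reducenormal}, but your argument shows it is not logically required here.
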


\section{Correlations of GPY weights}
\label{sec:gpy}

The aim of this section is to construct efficient
pseudorandom weights over $\Z$ majorizing the measure
associated to $W$-tricked primes.
The weight we consider (see Definition~\ref{thm:gpy:nugpydefinition} below)
is a truncated divisor sum whose correlations were
first investigated by Goldston, Pintz and Yildirim~\cite{GPY:correl,GPY:smallgaps}
in the context of small gaps between primes.
Green and Tao~\cite{GT:apsinprimes,GT:lineqs} investigated 
its pseudorandom behavior in greater generality,
and this weight is by now a standard tool,
e.g.~in the context of detecting polynomial patterns in 
primes~\cite{TZ:polynomial,TZ:polynomialerratum,LeWolf:polynomial}.
Throughout this section, we will assume familiarity
with~\cite[Appendix~D]{GT:lineqs}.

We now fix an integer $N$
larger than some absolute constant,
and we let $\omega \geq 1$ be a parameter.
We also let $W = \prod_{p \leq \omega} p$ and
we fix an integer $b$ such that $(b,W) = 1$.
It is useful to have a notation
for the normalized indicator function 
of $W$-tricked primes.

\begin{definition}[Measure of $W$-tricked primes]
\label{thm:gpy:wprimesmeasuredef}
	We let
	\begin{align*}
		\lambda_{b,W}(n)
		&= \frac{\phi(W)}{W} (\log N) \cdot 
		1( n \in [N] \ \text{and}\  b + Wn \in \mathcal{P} ).
	\end{align*}
\end{definition}

Our goal is thus to construct a
weight function over $\Z$ majorizing $\lambda_{b,W}$,
and satisfying strong pseudorandomness asymptotics.
Note that $o(1)$ terms throughout this article
are to be understood as $N \rightarrow \infty$,
and do not depend on any dimension 
or any affine system involved.

\begin{proposition}[Pseudorandom majorant over $\Z$]
\label{thm:gpy:quantpseudordoverZ}
	Let $D \geq 1$ be a parameter.
	There exists a
	constant $C_D$ such that the following holds. 
	For $N \geq C_D$ and $\omega = c_0 \log N$,
	there exists $\nu : \Z \rightarrow \R^+$
	such that, for every $\eps > 0$,
	\begin{align*}
		0 \leq \lambda_{b,W} \ll_D \nu \ll_\eps N^\eps
	\end{align*}		
	and, for any $P \geq N^{c_1}$ and
	any affine system $\psi : \Z^d \rightarrow \Z^t$ 
	of finite complexity and such that 
	$d,t,\| \dot{\psi} \| \leq D$, 
	\begin{align}
	\label{eq:gpy:pseudordoverZ}
		\E_{n \in [P]^d}
		\,\nu \big[ \psi_1(n) \big]
		\dots
		\nu \big[ \psi_t(n) \big] =
		1 + O_D \bigg( \frac{1}{(\log N)^{1-o(1)}} \bigg).
	\end{align}
\end{proposition}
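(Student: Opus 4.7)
The plan is to set $\nu(n) = \kappa_D \Lambda_{\chi,R,W}(n)$, where $\chi : \R \to [0,1]$ is a fixed smooth cutoff supported in $[-1,1]$ with $\chi(0) = 1$, $R = N^{c_1/(CD)}$ for a constant $C = C(D)$ chosen so that $R^{2t} \leq P^{1/2}$, and $\kappa_D$ is a normalization constant. The majorization $\lambda_{b,W} \ll_D \nu$ is then standard: whenever $b + Wn \in \mathcal{P}$ with $b+Wn > R$, all divisors $d$ of $b+Wn$ with $d \leq R$ reduce to $d=1$, so the inner sum collapses to $\chi(0) = 1$ and $\nu(n) \asymp_D (\phi(W)/W)\log R \gg_D (\phi(W)/W)\log N$. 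The pointwise bound $\nu \ll_\eps N^\eps$ follows from the crude estimate $|\Lambda_{\chi,R,W}(n)| \leq (\phi(W)\log R/W)\, d(Wn+b)^2$ and the divisor bound.

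The main task is to establish the linear forms estimate \eqref{eq:gpy:pseudordoverZ}. Expanding each square and multiplying over $i \in [t]$, one is led to
\begin{equation*}
   \E_{n \in [P]^d} \prod_{i=1}^t \nu[\psi_i(n)]
   = \Big( \tfrac{\kappa_D \phi(W)\log R}{W} \Big)^t
   \sum_{\mathbf{d}, \mathbf{d}'} \Big( \prod_{i} \mu(d_i)\mu(d_i') \chi_R(d_i) \chi_R(d_i') \Big)
   \cdot \alpha(\mathbf{d}, \mathbf{d}'),
\end{equation*}
where $\alpha(\mathbf{d},\mathbf{d}') = \E_{n \in [P]^d} \prod_i 1(\mathrm{lcm}(d_i,d_i') \mid W\psi_i(n)+b)$. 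Since each $d_i, d_i' \leq R$, the congruence modulus $q = \mathrm{lcm}(d_i,d_i' : i \in [t])$ satisfies $q \leq R^{2t} \leq P^{1/2}$, so by equidistribution of linear forms in boxes I can replace the average over $[P]^d$ by the average over $(\Z/q\Z)^d$ with an error that is negligible. By the Chinese Remainder Theorem, the resulting local density factorizes over primes.

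The next step is a Mellin substitution $\chi(\log d/\log R) = \frac{1}{2\pi i}\int_{(\sigma)} \tilde\chi(s) d^{-s}\, ds$ for each of the $2t$ cutoffs, converting the divisor sum into a $2t$-fold contour integral of a Dirichlet series. Arranging factors, this Dirichlet series becomes an Euler product $\prod_p E_p(\mathbf{s})$. For $p \leq \omega$ the coprimality $(b,W)=1$ forces the congruences to be unsatisfiable unless the corresponding $d_i, d_i'$ are all coprime to $p$, so $E_p(\mathbf{s})$ takes a simple form that, combined with the $(\phi(W)/W)^t$ prefactor, cancels cleanly. For $p > \omega$, bounded linear part $\|\dot\psi\| \leq D$ ensures that $p > D$ means the linear forms $\psi_i$ reduce to linearly inequivalent forms mod $p$ (finite complexity being preserved by Fact~\ref{thm:linalg:reducenormal}), giving the expected $1/p$, $1/p^2$, etc.\ densities. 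Shifting contours to the left of $\re s = 0$ and picking up the residue at the origin produces the main term, while the residual integrals contribute the error.

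The main obstacle is the quantitative extraction of the main term and the $(\log N)^{-1+o(1)}$ error: while Green and Tao's computation in \cite[Appendix~D]{GT:lineqs} tolerates only $\omega \asymp \log\log N$, we need $\omega \asymp \log N$. The delicate point is the joint contribution of the Euler factors at primes $\omega < p \leq R^C$ to the residue at $\mathbf{s} = 0$: one must show that, after the $(\phi(W)/W)^t$ cancellation, the remaining Euler product is $1 + O_D(1/\log N)$ with no hidden dependence on $W$. I would handle this by writing each local factor as $1 + O_D(p^{-2})$ plus a main term that exactly matches the $(1-1/p)^{-t}$ factors coming from the Archimedean integral, and estimating the tail via partial summation against Mertens' theorem. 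The error in the contour shift is then dominated by $\exp(-c\sqrt{\log R}) + (\log R)^{-1} \ll (\log N)^{-1+o(1)}$ using the rapid decay of $\tilde\chi$ on vertical lines. Combining these bounds yields the claimed estimate.
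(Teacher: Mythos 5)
Your proposal follows the same overall blueprint as the paper: take $\nu$ to be a scalar multiple of the GPY weight $\Lambda_{\chi,R,W}$ with $R = N^{\eta}$, establish the majorization by observing the divisor sum collapses to $\chi(0)$ when $b+Wn$ is a large prime, expand the $2t$-fold divisor sum, compute local densities via CRT, and extract the main term via Mellin/Fourier techniques while exploiting the finite-complexity hypothesis to make the local factors at primes $p > \omega$ behave well enough to tolerate $\omega \asymp \log N$. You have correctly identified the central obstacle (the paper's improvement over Green--Tao is precisely to replace the $e^{O(\sqrt{\omega})}$ dependence in the error term with a clean $1/\omega + L\log\omega/\log R$, valid for $\omega = c_0\log N$), and your observation that the $(\phi(W)/W)^t$ prefactor must cancel against the small-prime Euler factors is exactly what Proposition~\ref{thm:gpy:eulerproductestimate} does.

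The one substantive point where you diverge in technique is that you propose to extract the main term via genuine contour shifting in the multivariable Mellin plane, picking up a residue at the origin. The paper (following \cite[Appendix~D]{GT:lineqs}) deliberately avoids this: it stays on the single vertical line $\re(z_{ij}) = 1/\log R$, writes $\chi$ as a real-line Fourier integral with a truncation parameter $L$, estimates the Euler product directly on that line via Proposition~\ref{thm:gpy:eulerproductestimate}, and then extends back to $\R$; the residue computation is replaced by the closed-form evaluation $c_{\chi,2} = \int_0^\infty |\chi'|^2$ of Lemma~\ref{thm:gpy:sievefactor}. The contour-shifting route (closer to the original GPY computation) is certainly viable, but for a $2t$-fold integral whose integrand has a degenerate singularity at the origin, the residue bookkeeping is considerably more intricate than the paper's line-truncation approach, and is part of what Green and Tao's Fourier trick was designed to sidestep. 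A minor further remark: the paper takes $\chi$ to be a bump function times an exponential so that the Fourier weight $\varphi$ decays like $e^{-c|\xi|^{1/2}}$; a generic smooth cutoff only yields polynomial decay, which (as the paper's footnote notes) costs a factor $(\log N)^\eps$ in the error---still within the claimed $(\log N)^{-1+o(1)}$, so this is not a gap, merely a suboptimal choice. Also, the linear independence of $\dot\psi_i$ and $\dot\psi_j$ modulo $p$ for $p > 2\|\dot\psi\|^2$ follows from a direct $2\times 2$ minor argument, not from Fact~\ref{thm:linalg:reducenormal}, which concerns a single large modulus $M$.
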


Note that simply applying~\cite[Theorem~D.3]{GT:lineqs}
would be insufficient for our purpose, since the
error there is $e^{ O( \sqrt{\omega} ) } (\log N)^{-1/20}$
and therefore it is non-trivial only for 
$\omega \leq c(\log\log N)^{2}$,
thus rendering the methods of 
Helfgott and de Roton~\cite{HR:rothprimes} unapplicable.
The argument of~\cite{GT:apsinprimes} also requires
a modulus $\omega \leq c\log\log N$.
Our construction follows closely
that in~\cite[Appendix~D]{GT:lineqs},
however with one important difference:
we make a stronger assumption
of finite complexity on the system of linear forms,
and under this assumption we obtain
improved estimates on the Euler products involved.
We also remark that for the purpose of
proving Theorem~\ref{thm:intro:primescplx1},
any error term of the form $(\log N)^{-c}$
in~\eqref{eq:gpy:pseudordoverZ} would suffice,
however we take the opportunity here 
to determine the highest level of pseudorandomness 
attainable from Green and Tao's approach.

We let $R=N^\eta$, where $\eta$ is a small
positive constant specified later on. 
We consider a family of reals $\rho : \N \rightarrow [0,e^2]$
such that $\rho(1) = 1$ and with support on $[R]$, 
which we also specify later on.
Our main object of study in this section
is the following expression, which may be seen
as a smooth Selberg-type weight.

\begin{definition}[GPY weight]
\label{thm:gpy:nugpydefinition}
	We let $h_{R,W} = \frac{\phi(W)}{W} \log R$ and
	\begin{align*}
		\Lambda_{\chi,R,W}(n)
		= h_{R,W} \bigg( \sum_{m | Wn + b} \mu(m) \rho(m) \bigg)^2.
	\end{align*}
\end{definition}

The pseudorandom weight we seek will
turn out to be a scalar multiple
of the above function:
we defer the precise choice of normalization
until the end of the proof of 
Proposition~\ref{thm:gpy:quantpseudordoverZ}.

\begin{lemma}
\label{thm:gpy:gpymajorizes}
	When $\omega = c_0\log N$ and $R = N^{\eta}$ with 
	$0 < \eta \leq c_0/2$, we have 
	\begin{align*}
		0 \leq 	\lambda_{b,W} \ll_\eta \Lambda_{\chi,R,W} \ll_\eps N^{\eps}
	\end{align*}
	for every $\eps > 0$.
\end{lemma}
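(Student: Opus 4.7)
The plan is to establish the chain of inequalities pointwise, each directly from the definitions of $\Lambda_{\chi,R,W}$ and $\lambda_{b,W}$. Non-negativity of $\Lambda_{\chi,R,W}$ is immediate since it is the positive quantity $h_{R,W} = \tfrac{\phi(W)}{W} \log R$ times a square.

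For the majorization $\lambda_{b,W} \ll_\eta \Lambda_{\chi,R,W}$, I would only work at $n$ where $\lambda_{b,W}$ is non-zero, that is, $n \in [N]$ with $p := b + Wn$ prime. The only divisors of the prime $p$ are $1$ and $p$ itself, and the latter lies outside the support $[R]$ of $\rho$ as soon as $p > R$, in which case
\begin{align*}
    \sum_{m | p} \mu(m) \rho(m) = \mu(1) \rho(1) = 1,
\end{align*}
so that $\Lambda_{\chi,R,W}(n) = h_{R,W} = \eta \cdot \tfrac{\phi(W)}{W} \log N = \eta \, \lambda_{b,W}(n)$. The condition $p > R$ follows from $W > R$: by Chebyshev's estimate (the prime number theorem is more than enough), $\log W = \theta(\omega) = (1 + o(1))\omega$, so under $\omega = c_0 \log N$ we have $W = N^{c_0 (1 + o(1))}$, which exceeds $R = N^\eta$ whenever $\eta \leq c_0/2$ and $N$ is sufficiently large. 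This yields the desired inequality with implicit constant $\eta^{-1}$.

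For the upper bound $\Lambda_{\chi,R,W} \ll_\eps N^\eps$ on the relevant polynomial range of $n$, I would discard cancellation and use $\rho \leq e^2$ together with the triangle inequality to obtain
\begin{align*}
    \Big| \sum_{m | Wn + b} \mu(m) \rho(m) \Big| \leq e^2 \, \tau(Wn + b).
\end{align*}
The classical divisor bound $\tau(k) \ll_\eps k^\eps$, combined with $h_{R,W} \leq \log R \leq \log N$ and $Wn + b \ll N^C$ on the effective range of $n$, then gives $\Lambda_{\chi,R,W}(n) \ll \log N \cdot N^{O(\eps)}$, and rescaling $\eps$ absorbs the logarithmic factor.

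No step of the argument is a serious obstacle; the only genuine verification is that $W$ outgrows $R$, which is precisely what the coupling $\omega = c_0 \log N$ together with the hypothesis $\eta \leq c_0/2$ is designed to ensure.
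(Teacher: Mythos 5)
Your proof is correct and follows essentially the same route as the paper's: at a point where $\lambda_{b,W}$ is nonzero, the value $Wn+b$ is a prime exceeding $W > R$, so the only surviving term in the divisor sum is $m=1$ and $\Lambda_{\chi,R,W}(n) = h_{R,W} = \eta\,\lambda_{b,W}(n)$; the upper bound is the standard divisor-function estimate. You merely spell out the Chebyshev estimate $\log W \sim \omega = c_0\log N$, which the paper leaves implicit.
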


\begin{proof}
If $\lambda_{b,W}(n)$ is non-zero, 
$Wn + b$ is a prime of size at least 
$W > N^{c_0/2}$, for $N$ large enough.
Therefore any non-trivial divisor of $Wn+b$ is larger than $R$, 
so that 
$\Lambda_{\chi,R,W}(n) = \frac{\phi(W)}{W} (\log R) \rho(1) 
\leq \eta^{-1} \lambda_{b,W}(n)$.
The last inequality follows from
standard bounds on the 
divisor function~\cite{Ten:intro}.
\end{proof}

We now say more on the choice of weights $\rho(m)$.
We let 
\begin{align*}
	\rho(m) = \chi\Big( \frac{\log m}{\log R} \Big)
	\quad\text{where}\quad
	\chi(x) = 1_{[-1,1]}(x) \cdot e^{x+1} e^{-1/(1-x^2)}
\end{align*}
is the usual bump function multiplied by an exponential.
By Fourier inversion, we may write
$\chi(x) = \int_{-\infty}^{\infty} \varphi(\xi) e^{-(1+i\xi)x} \mathrm{d}\xi$
for every $x \in [-1,1]$,
where $\varphi$ is the Fourier transform
of $1_{[-1,1]}(x) e^{1-1/(1-x^2)}$,
and thus decays as\footnote{
Using a weaker decay $\ll (1+|\xi|)^{-A}$ instead 
would yield a slightly 
weaker error term $(\log N)^{-1+\eps}$
in Proposition~\ref{thm:gpy:quantpseudordoverZ}.}
$\varphi(\xi) \ll e^{-c|\xi|^{1/2}}$
(see e.g.~\cite{Johnson:saddle}).
The interest in this choice is that, by truncation
at a parameter $L \geq 1$, we may write
\begin{align}
\label{eq:gpy:weighttruncation}
	&\phantom{(m \leq R).} &
	\rho(m) &= \int_{-L}^{L} m^{-(1+i\xi)/\log R} \varphi(\xi) d\xi
	+ O\big( e^{-cL^{1/2}} \big)
	&&(m \leq R).
\end{align}
This has the effect of introducing a small negative power of $m$ 
in the Euler products arising in computations,
which simplifies their evaluation greatly.

We now begin the proof of Proposition~\ref{thm:gpy:quantpseudordoverZ}.
We fix $D \geq 1$ and $\omega = c_0 \log N$,
so that we may assume that $\omega$
is larger than any fixed constant depending on $D$.
We then consider a system of affine-linear forms
$\psi : \Z^d \rightarrow \Z^t$ of finite complexity
such that $d,t,\| \dot{\psi} \| \leq D$.
We let further implicit constants and explicit
unsuscripted constants $c,C$ depend on $d,t,\| \dot{\psi} \|$,
while subscripted constants $c_0,c_1,\dots$ are absolute.

The first step of the proof is to unfold divisor sums in the
correlation of divisor sums, and it is useful in 
this regard to introduce the notation 
$\Omega = [t] \times [2]$.
Note also that the prime in $\sideset{}{'}\sum$
means that the summation is restricted to square-free numbers.
The following constitutes the beginning of
the proof of~\cite[Theorem~D.3]{GT:lineqs},
which we do not reproduce.

\begin{proposition}[Unfolding sums]
\label{thm:gpy:divsumunfolding}
Given $(m_{ij}) \in \N^{\Omega}$, write $m_i = [m_{i1},m_{i2}]$ and
\begin{align*}
	\alpha(m_1,\dots,m_t) = \p_{n \in \Z_m^d}\big(\, m_i | W \psi_i(n) + b \quad \forall i \in [t] \,\big).
\end{align*}
Let also $P \geq 1$. Then
\begin{align*}
	&h_{R,W}^{-t} \sum_{n \in [P]^d}
	\Lambda_{\chi,R,W} \big[ \psi_1(n) \big] \dots \Lambda_{\chi,R,W} \big[ \psi_t(n) \big] \\
	= &P^d
	\cdot \sideset{}{'} \sum_{ (m_{ij}) \in \N^{\Omega} }
	\alpha(m_1,\dots,m_t)
	\prod_{(i,j) \in \Omega} \mu(m_{ij}) \rho(m_{ij})
	+ O( R^{2|\Omega|} P^{d-1} )
\end{align*}
\end{proposition}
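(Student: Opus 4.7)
My plan is to follow the standard unfolding routine, essentially the beginning of the argument in \cite[Appendix~D]{GT:lineqs}, so I would mostly track the bookkeeping carefully to produce the stated error term.

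First I would expand each factor
\begin{align*}
	\Lambda_{\chi,R,W}\big[\psi_i(n)\big]
	= h_{R,W} \sum_{m_{i1},m_{i2}\,|\,W\psi_i(n)+b} \mu(m_{i1})\mu(m_{i2})\,\rho(m_{i1})\rho(m_{i2}),
\end{align*}
and take the product over $i \in [t]$. Since $\mu$ restricts the summation to square-free $m_{ij}$, the resulting sum is exactly a primed sum over tuples $(m_{ij}) \in \N^\Omega$, and each summand is a product $\prod_{(i,j)\in\Omega}\mu(m_{ij})\rho(m_{ij})$ weighted by the joint divisibility condition. Interchanging this with the sum over $n \in [P]^d$ is justified because $\rho$ is supported on $[R]$, so the whole expansion is a finite sum.

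The divisibility conditions $m_{i1}, m_{i2} \mid W\psi_i(n)+b$ combine into $m_i := [m_{i1},m_{i2}] \mid W\psi_i(n)+b$, and they only constrain $n$ modulo $m := \operatorname{lcm}(m_1,\dots,m_t)$. The key step is then a geometric counting argument: partitioning $[P]^d$ into translates of $[m]^d$, I would obtain
\begin{align*}
	\#\big\{n \in [P]^d : m_i \mid W\psi_i(n)+b \ \forall i \big\}
	= P^d\, \alpha(m_1,\dots,m_t) + O(P^{d-1} m),
\end{align*}
since each full box contributes $m^d \alpha$ points by definition of $\alpha$, and the boundary shell consists of $O(P^{d-1}m)$ lattice points, each contributing $O(1)$.

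The main term, combined with the outer $h_{R,W}^t$, already matches the desired expression. To deal with the error I would use the support of $\rho$ and its uniform boundedness: each $m_{ij}\le R$, $|\rho(m_{ij})| \le e^2$, and $m \le \prod_{(i,j)\in\Omega} m_{ij} \le R^{|\Omega|}$. Summing over at most $R^{|\Omega|}$ admissible tuples, the total error is
\begin{align*}
	h_{R,W}^t \sum_{(m_{ij})}\prod_{(i,j)\in\Omega}|\rho(m_{ij})|\cdot O(P^{d-1}m)
	= O\big(h_{R,W}^t\, R^{2|\Omega|} P^{d-1}\big),
\end{align*}
and dividing through by $h_{R,W}^t$ produces the stated $O(R^{2|\Omega|}P^{d-1})$. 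There is no real obstacle here — the only mild subtlety is the crude bound $m\le R^{|\Omega|}$, which is wasteful but adequate since $P$ will eventually be a large power of $R$ so that the boundary term is negligible compared to the main term.
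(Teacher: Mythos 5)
Your proof is correct and follows exactly the route the paper intends: it explicitly states that this proposition "constitutes the beginning of the proof of \cite[Theorem~D.3]{GT:lineqs}" and declines to reproduce it, so your reconstruction of the standard Green--Tao unfolding (expand each squared divisor sum, interchange with the outer sum, count $n$ in residue classes modulo $m=[m_1,\dots,m_t]$ with an $O(P^{d-1}m)$ boundary term, then sum crudely using $m\le R^{|\Omega|}$ and the finite support of $\rho$) is precisely what is meant. One small presentational wobble: once you write $\Lambda_{\chi,R,W}[\psi_i(n)]=h_{R,W}\sum_{m_{i1},m_{i2}}\cdots$ and multiply the $h_{R,W}^{-t}$ from the left-hand side against the $t$ factors of $h_{R,W}$, the normalization cancels immediately and the resulting identity carries no $h_{R,W}$ anywhere, so the later "$h_{R,W}^t\cdots$, and dividing through by $h_{R,W}^t$" step is spurious bookkeeping; the error term is just $\sum_{(m_{ij})}\prod|\rho(m_{ij})|\cdot O(P^{d-1}m)\ll R^{2|\Omega|}P^{d-1}$ directly.
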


Before proceeding further, we
analyze the function $\alpha$
appearing in Proposition~\ref{thm:gpy:divsumunfolding}.
By the Chinese Remainder theorem,
$\alpha(m_1,\dots,m_t)$ is multiplicative
in the variables $m_{ij}$, keeping
in mind that $m_i = [m_{i1},m_{i2}]$.
Writing $m_{ij} = p^{r_{ij}}$,
$r_i = \max (r_{i1},r_{i2})$,
and $B = \{ (i,j) \in \Omega : r_{ij} = 1 \}$,
we have $r_i = 1$ if and only if $r_{ij} =1 $
for some $j \in [2]$, that is, if and only if the slice $B_i$
of $B$ at $i$ is non-empty.
Therefore
\begin{align}
\label{eq:gpy:alphasetdef}
	\alpha(p^{r_1},\dots,p^{r_t})
	= \p_{n \in \Z_p^d} \big( p | W \psi_i(n) + b \quad \forall i : B_i \neq \varnothing \big) 
	\eqqcolon \alpha(p,B).
\end{align}
Motivated by this, we say that a non-empty set $B \subset \Omega$
is \textit{vertical} when, for some $i \in [t]$, 
we have $B \subset \{i\} \times [2]$.
We now estimate the size of the factors $\alpha(p,B)$.

\begin{proposition}[Local probabilities]
\label{thm:gpy:localprobs}
For $B \neq \varnothing$,
we have
\begin{align*}
	\alpha(p,B) =
	\begin{cases}
		0 & \text{if $p \leq \omega$} \\
		p^{-1} &\text{if $p > \omega$ and $B$ is vertical} \\
		O(p^{-2}) & \text{if $p > \omega$ and $B$ is not vertical}
	\end{cases}
\end{align*}	
\end{proposition}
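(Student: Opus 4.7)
The plan is to interpret $\alpha(p,B)$ as the normalized point count of an affine variety over $\F_p$. By~\eqref{eq:gpy:alphasetdef} this quantity depends on $B$ only through the set of ``active rows'' $I = \{ i \in [t] : B_i \neq \varnothing \}$, and equals $p^{-d}$ times the number of $n \in \Z_p^d$ satisfying the affine system $W\psi_i(n) + b \equiv 0 \pmod{p}$ for every $i \in I$. I would then split into the three cases of the statement.

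For $p \leq \omega$ I would use that $p \mid W$, so the reduced system degenerates to $b \equiv 0 \pmod{p}$; this cannot hold because $(b,W)=1$ forces $p \nmid b$, whence $\alpha(p,B)=0$. For $p > \omega$, the integer $W$ is invertible modulo $p$ and the system becomes $\psi_i(n) \equiv -b\overline{W} \pmod{p}$ for $i \in I$. If $B$ is vertical, $I = \{i_0\}$ is a singleton; the finite complexity hypothesis together with Lemma~\ref{thm:linalg:fincplx} ensures $\dot{\psi}_{i_0} \neq 0$, and its reduction remains non-zero mod $p$ provided $p > \|\dot{\psi}\|$, which is automatic under $p > \omega$ since $\omega$ may be taken larger than any constant depending on $D$. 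The single non-degenerate affine equation then has exactly $p^{d-1}$ solutions in $\Z_p^d$, and $\alpha(p,B) = 1/p$.

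When $B$ is not vertical, $I$ contains two distinct indices $i_1 \neq i_2$, and by Lemma~\ref{thm:linalg:fincplx} the linear parts $\dot{\psi}_{i_1}$ and $\dot{\psi}_{i_2}$ are linearly independent over $\Q$. Some $2 \times 2$ minor of their coefficient matrix is therefore a non-zero integer of absolute value at most $2\|\dot{\psi}\|^2 \leq 2D^2$, and this minor survives reduction modulo $p$ as soon as $\omega$ exceeds an appropriate constant depending on $D$. Hence the pair of reduced forms spans a two-dimensional subspace of $(\F_p^d)^*$, the affine subvariety cut out by the two equations has at most $p^{d-2}$ points in $\Z_p^d$, and $\alpha(p,B) \leq p^{-2}$, which is the desired bound.

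The only subtlety in this plan is the preservation of $\Q$-linear independence through reduction modulo $p$, but this is harmless since $\omega = c_0 \log N$ eventually dominates any constant depending on $D$, exactly as assumed at the start of the section; all other steps are direct linear algebra over $\F_p$.
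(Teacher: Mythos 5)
Your proof is correct and follows essentially the same route as the paper: decompose by whether $p$ divides $W$, then observe that finite complexity forces the active linear parts to be non-zero (vertical case) or pairwise independent modulo $p$ (non-vertical case) once $p > \omega$ dominates the relevant coefficient bounds. You spell out the reduction-mod-$p$ argument a bit more explicitly than the paper does (which simply states that $p > 2\|\dot{\psi}\|^2$ ensures $\Z_p$-independence), but the underlying idea is identical.
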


\begin{proof}
Recall that $\alpha(p,B)$ is defined by~\eqref{eq:gpy:alphasetdef}.
When $p \leq \omega$,
we have $p | W$ and $(b,W)=1$,
therefore $p$ does not divide any value $W \psi_i(n) + b$ 
and $\alpha(p,B) = 0$.
When $p > \omega > \| \dot{\psi} \|$, we have $p \nmid W$
and $W \dot{\psi}_i \neq 0$ in $\Z_p$ for every $i \in [t]$.
When $B$ is vertical, there is only one $i$ such that
$B_i$ is non-empty and therefore
$\alpha(p,B) = p^{-1}$.
When $B$ is not vertical, there are at least
two indices $i,j$ such that $B_i,B_j \neq \varnothing$.
Since $p > \omega > 2\| \dot{\psi} \|^2$, the linear forms
$\dot{\psi}_i$ and $\dot{\psi}_j$ 
are linearly independent over $\Z_p$,
and therefore $\alpha(p,B) \leq p^{-2}$.
\end{proof}

For reasons that shall be clear in a moment,
we define the following Euler factor.
\begin{definition}[Euler factor]
\label{thm:gpy:eulerfactordef}
	Let $\xi \in \R^{\Omega}$ and $z_{ij}=(1+i\xi_{ij})/\log R$.
	We let
	\begin{align*}
		E_{p,\xi} =
		\sum_{B \subset \Omega} (-1)^{|B|} \alpha(p,B) p^{-\sum_{(i,j) \in B} z_{ij}}. 
	\end{align*}
\end{definition}
The local estimates of Proposition~\ref{thm:gpy:localprobs}
and the fact that $\re(z_{ij}) > 0$
ensure the absolute convergence of the product $\prod_p E_{p,\xi}$.
We now return to the unfolded sum in 
Proposition~\ref{thm:gpy:divsumunfolding},
in which we proceed to replace the weights $\rho(m)$
by their truncated Fourier expression~\eqref{eq:gpy:weighttruncation}.
This step being again well described in~\cite[Appendix~D]{GT:lineqs},
we do not include the proof here.

\begin{proposition}[Unfolding integrals]
\label{thm:gpy:integralsunfolding}
Writing $m_i = [m_{i1},m_{i2}]$, we have, for any $L \geq 1$,
\begin{align*}
	&\sideset{}{'}\sum_{(m_{ij}) \in \N^{\Omega}} 
	\alpha(m_1,\dots,m_t)	
	\prod_{(i,j) \in \Omega}
	\mu(m_{ij}) \rho(m_{ij}) \\
	= &\idotsint\limits_{[-L,L]^{\Omega}}
	\prod_p E_{p,\xi} 
	\prod_{(i,j) \in \Omega} 
	\varphi(\xi_{ij}) \mathrm{d}\xi_{ij}
	+O\big( e^{-cL^{1/2}} (\log R)^{|\Omega|} \big).
\end{align*}
\end{proposition}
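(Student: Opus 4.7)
The plan is to substitute the Fourier-truncated representation \eqref{eq:gpy:weighttruncation} of $\rho$ into each of the $|\Omega|$ factors $\rho(m_{ij})$ appearing in the sum, interchange summation with integration in the resulting main term, and recognize the inner Dirichlet series as the Euler product $\prod_p E_{p,\xi}$.

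First I would write $\rho(m) = F(m) + E(m)$ for $m \leq R$, where $F(m) = \int_{-L}^{L} m^{-(1+i\xi)/\log R} \varphi(\xi)\,\mathrm{d}\xi$ and $|E(m)| = O(e^{-cL^{1/2}})$, while both $|F(m)|,|\rho(m)| \ll 1$ uniformly for $m \leq R$ (since $m^{-1/\log R} \leq e$ and $\int |\varphi| < \infty$). Expanding the product $\prod_{(i,j) \in \Omega} \rho(m_{ij}) = \prod (F(m_{ij}) + E(m_{ij}))$ produces one principal term $\prod_{(i,j)} F(m_{ij})$ together with $2^{|\Omega|}-1$ error terms, each of which carries at least one factor of size $O(e^{-cL^{1/2}})$ while the remaining factors are uniformly bounded. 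To absorb these into the stated error, I would control the residual sum
\begin{align*}
    \sideset{}{'}\sum_{(m_{ij}) \in \N^{\Omega}}
    \alpha(m_1,\dots,m_t)\,\mathbf{1}(m_{ij} \leq R\ \forall (i,j))
    \,\ll\, (\log R)^{|\Omega|},
\end{align*}
which follows from Proposition~\ref{thm:gpy:localprobs} and standard Euler product estimates, since the relevant Dirichlet series $\sum \alpha \prod \mu(m_{ij})^2 m_{ij}^{-s}$ has a pole of order exactly $|\Omega|$ at $s=1$ with all local factors comparable to $(1-p^{-s})^{-|\Omega|}$.

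For the main term, swapping the finite-measure integration over $[-L,L]^{\Omega}$ with the sum is justified by the same absolute convergence argument, since $|m_{ij}^{-(1+i\xi_{ij})/\log R}| = m_{ij}^{-1/\log R} \leq 1$ on the support of $\rho$. This yields
\begin{align*}
    \idotsint\limits_{[-L,L]^{\Omega}}
    \bigg( \sideset{}{'}\sum_{(m_{ij})} \alpha(m_1,\dots,m_t) \prod_{(i,j)} \mu(m_{ij}) m_{ij}^{-z_{ij}} \bigg)
    \prod_{(i,j)} \varphi(\xi_{ij})\,\mathrm{d}\xi_{ij},
\end{align*}
with $z_{ij}=(1+i\xi_{ij})/\log R$. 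The inner sum is over squarefree tuples, and by the multiplicativity of $\alpha(m_1,\dots,m_t)$ in the variables $m_{ij}$ (via the Chinese Remainder theorem, as noted right after Proposition~\ref{thm:gpy:divsumunfolding}), it factors as $\prod_p (\text{local sum})$. The local sum at $p$ is parametrized by the set $B \subset \Omega$ of indices $(i,j)$ with $m_{ij}=p$: each such choice contributes $(-1)^{|B|} \alpha(p,B)\, p^{-\sum_{(i,j) \in B} z_{ij}}$, matching Definition~\ref{thm:gpy:eulerfactordef} of $E_{p,\xi}$ exactly.

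The main obstacle I anticipate is verifying absolute convergence cleanly enough to justify both the interchange of sum and integral and the Euler factorization in a uniform way on the compact integration region. The estimates of Proposition~\ref{thm:gpy:localprobs} are critical here: the bound $\alpha(p,B) = O(p^{-2})$ when $B$ is non-vertical (together with $\alpha(p,B)=p^{-1}$ on vertical $B$) ensures that the local factors $E_{p,\xi}$ differ from a product $\prod_{(i,j) \in \Omega}(1 - p^{-1-z_{ij}})$ by $O(p^{-2})$ terms uniformly in $\xi \in [-L,L]^{\Omega}$, so that both the Euler product and the sum of its expansion converge absolutely and equal one another. Once these bookkeeping matters are in place, the identity in the proposition follows by combining the main term with the cumulative $O(e^{-cL^{1/2}}(\log R)^{|\Omega|})$ error from the Fourier truncation.
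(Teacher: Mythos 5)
The paper omits a proof and defers to the argument in \cite[Appendix~D]{GT:lineqs}, so the comparison is against that implicit source. Your overall strategy — substitute the truncated Fourier representation of $\rho$, peel off the main term, swap sum and integral, and recognize the Euler product — is indeed the standard approach. However, there is a genuine gap in the passage from the restricted to the unrestricted Dirichlet series. You write $\rho(m) = F(m) + E(m)$ only ``for $m \leq R$'', and since $\rho$ is supported on $[R]$ the resulting main term is $\sideset{}{'}\sum_{m_{ij} \leq R} \alpha \prod \mu(m_{ij}) F(m_{ij})$. But the displayed integral expression you arrive at has the \emph{unrestricted} sum over all squarefree tuples, and it is only this unrestricted sum that factors as $\prod_p E_{p,\xi}$; a sum truncated to $m_{ij} \leq R$ does not have an Euler product. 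The tail $\sum_{\max m_{ij} > R}$ is \emph{not} small: with $\sigma = 1/\log R$ one has $\sum_{m>R} \mu^2(m) m^{-1-\sigma} \asymp e^{-1}/\sigma$, a constant fraction of the full sum $\asymp 1/\sigma$. The missing ingredient is the observation that, because $\chi \in C^\infty_c(\R)$ has rapidly decaying Fourier transform, Fourier inversion gives $\chi(x) = \int_{\R} \varphi(\xi) e^{-(1+i\xi)x}\,\mathrm{d}\xi$ for \emph{all} $x \in \R$ (with both sides vanishing for $|x| > 1$), hence $\rho(m) = \int_{\R} m^{-(1+i\xi)/\log R}\varphi(\xi)\,\mathrm{d}\xi$ for all $m \geq 1$. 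Splitting at $|\xi| = L$ then yields $\rho(m) = F(m) + E(m)$ with $|E(m)| \ll m^{-1/\log R}\,e^{-cL^{1/2}}$ valid for every $m \geq 1$, after which the substitution directly produces the unrestricted Dirichlet series, and the $m^{-1/\log R}$ decay in $|E|$ and $|F|$ makes the remaining error estimates go through.

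Two smaller inaccuracies: the shifted Dirichlet series $\sideset{}{'}\sum \alpha \prod \mu^2(m_{ij}) m_{ij}^{-\sigma}$ has Euler factors $\asymp \zeta(1+\sigma)^2 \zeta(1+2\sigma)$ per index $i$ (because of the $[m_{i1},m_{i2}]$ coupling), giving a pole of order $3t$ at $\sigma = 0$, not order $|\Omega| = 2t$ at $s=1$; and $E_{p,\xi}$ differs from $\prod_{(i,j)\in\Omega}(1 - p^{-1-z_{ij}})$ by a term $\sum_i p^{-1-z_{i1}-z_{i2}}$, which is of size $p^{-1-2\sigma}$, not $O(p^{-2})$ — the correct comparison Euler factor must include all vertical $B$, as in Proposition~\ref{thm:gpy:eulerproductestimate}. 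Neither affects the structure of the argument, but the first means the stated error exponent needs care.
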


With the estimates on local probabilities at hand, 
one can easily estimate the Euler product arising in 
Proposition~\ref{thm:gpy:integralsunfolding}.

\begin{proposition}[Euler product estimate]
\label{thm:gpy:eulerproductestimate}
	Let $1 \leq L \leq \tfrac{c\log R}{\log\omega}$ be a parameter.
	For every $\xi \in [-L,L]^{\Omega}$, we have
	\begin{align*}
		\prod_p E_{p,\xi} =
		\bigg( 1 + O\Big( \frac{1}{\omega} + \frac{L \log\omega}{\log R} \Big) \bigg)
		\cdot h_{R,W}^{-t} \cdot
		\prod_{\textrm{$B$ vertical}} \bigg( \sum_{(i,j) \in B} (1 + i\xi_{ij}) \bigg)^{-(-1)^{|B|}} .	
	\end{align*}
\end{proposition}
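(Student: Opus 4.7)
The plan is to split the Euler product at the threshold $p = \omega$ and to model each local factor at $p > \omega$ on an appropriate ratio of Riemann zeta local factors. For $p \leq \omega$, Proposition \ref{thm:gpy:localprobs} gives $\alpha(p,B) = 0$ for every non-empty $B \subset \Omega$, so only the term $B = \varnothing$ (with $\alpha(p,\varnothing) = 1$) contributes to $E_{p,\xi}$; this yields $E_{p,\xi} = 1$ and $\prod_{p \leq \omega} E_{p,\xi} = 1$.

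For $p > \omega$, I would split the defining sum of $E_{p,\xi}$ into three parts: the empty set, the non-empty vertical $B$, and the $O_D(1)$ non-vertical $B$. By Proposition \ref{thm:gpy:localprobs}, each vertical $B$ contributes $(-1)^{|B|} p^{-1} p^{-\sum_B z_{ij}}$ while each non-vertical $B$ contributes $O_D(p^{-2})$. Summing over the three non-empty subsets of each fiber $\{i\} \times [2]$ gives
\begin{align*}
	E_{p,\xi} = 1 + \sum_{i=1}^t p^{-1} \bigl( p^{-z_{i1} - z_{i2}} - p^{-z_{i1}} - p^{-z_{i2}} \bigr) + O_D(p^{-2}).
\end{align*}
The key algebraic observation is that expanding the product $G_{p,\xi} := \prod_{i=1}^t (1 - p^{-1-z_{i1}})(1 - p^{-1-z_{i2}})/(1 - p^{-1-z_{i1}-z_{i2}})$ to first order in $p^{-1}$ yields the same main term, so $E_{p,\xi} = G_{p,\xi}(1 + O_D(p^{-2}))$; the bound $|p^{-z_{ij}}| \leq 1$ makes this uniform in $\xi \in [-L,L]^{\Omega}$.

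Taking the product over $p > \omega$, the $O_D(p^{-2})$ factors multiply to $1 + O_D(1/\omega)$ via $\sum_{p > \omega} p^{-2} \ll 1/\omega$. The factor $\prod_{p > \omega} G_{p,\xi}$ factors, through the Euler product identity $\prod_p (1 - p^{-s})^{-1} = \zeta(s)$, into $\prod_{i=1}^t \zeta(1 + z_{i1} + z_{i2})/[\zeta(1+z_{i1})\zeta(1+z_{i2})]$, divided by the analogous local quotient restricted to $p \leq \omega$. Applying the Laurent expansion $\zeta(1+z) = z^{-1}(1 + O(|z|))$, valid since $|z_{ij}| \ll L/\log R$ is small, converts each zeta quotient into $z_{i1}z_{i2}/(z_{i1}+z_{i2})$ with an error $1 + O_D(L/\log R)$. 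The remaining product over $p \leq \omega$ is evaluated via $p^{-z_{ij}} = 1 + O((L+1)\log p/\log R)$: each inner factor reduces to $(1-p^{-1})^{-1}(1 + O(p^{-1}(L+1)\log p/\log R))$, and Mertens' estimate $\sum_{p \leq \omega}\log p/p \ll \log\omega$ together with $\prod_{p \leq \omega}(1-p^{-1})^{-1} = W/\phi(W)$ gives a total contribution $(W/\phi(W))^t (1 + O_D(L\log\omega/\log R))$. Substituting $z_{ij} = (1+i\xi_{ij})/\log R$ and combining $(W/\phi(W))^t (\log R)^{-t} = h_{R,W}^{-t}$ with the identity
\begin{align*}
	\prod_{B \text{ vertical}} \Bigl( \textstyle\sum_{(i,j) \in B}(1 + i\xi_{ij})\Bigr)^{-(-1)^{|B|}} = \prod_{i=1}^t \frac{(1+i\xi_{i1})(1+i\xi_{i2})}{2 + i\xi_{i1} + i\xi_{i2}}
\end{align*}
completes the assembly.

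The main obstacle is keeping all error terms linear in $L$ and in $\log\omega/\log R$, rather than exponential. The delicate step is the linearization $\exp(O(L\log\omega/\log R)) = 1 + O(L\log\omega/\log R)$ of the small-prime contribution; this demands the built-in hypothesis $L \leq c\log R/\log\omega$. This is precisely where the present bound improves on the cruder $e^{O(\sqrt{\omega})}$ of \cite[Theorem D.3]{GT:lineqs} and permits the modulus $\omega$ to be as large as $c_0 \log N$ in Proposition \ref{thm:gpy:quantpseudordoverZ}.
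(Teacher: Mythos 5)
Your proposal is correct and follows essentially the same route as the paper's proof: split the Euler product at $p = \omega$, express the vertical contributions via $\prod_{\text{$B$ vertical}} (1 - p^{-1-\sum_B z_{ij}})^{-(-1)^{|B|}}$ (your $G_{p,\xi}$), absorb non-vertical contributions into $1 + O(\omega^{-1})$, complete the Euler product to a zeta quotient, and linearize the small-prime correction $\prod_{p\leq\omega}$ using the hypothesis $L \leq c\log R/\log\omega$. The paper compresses the bookkeeping slightly (it does not name $G_{p,\xi}$ and folds the $p \leq \omega$ factors directly into the form $\prod_{p\leq\omega}(1-p^{-1}+O(L\log p/(p\log R)))^{-t}$), but the decomposition, the estimates, and the role of each hypothesis match yours step for step.
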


\begin{proof}
Note at the outset the useful identity
$\sum_{\textrm{$B$ vertical}} (-1)^{|B|} = -t$,
and write $z_{ij} = (1+i\xi_{ij})/\log R$
as in Definition~\ref{thm:gpy:eulerfactordef}.
By Proposition~\ref{thm:gpy:localprobs}, we have
\begin{align*}
	\prod_p E_{p,\xi}
	&= \prod_{p > \omega} \bigg( 1 + \sum_{\textrm{$B$ vertical}} (-1)^{|B|} p^{-1-\sum_B z_{ij}} + O( p^{-2} ) \bigg) \\
	&=  ( 1 + O( \omega^{-1} ) )
		\prod_{p > \omega} \prod_{\textrm{$B$ vertical}} ( 1 - p^{-1-\sum_B z_{ij}} )^{-(-1)^{|B|}}
\end{align*}
Since $p^{-z} = 1 + O(\frac{L\log p}{\log R})$
for $p \leq \omega$ and $|z| \leq L/\log R$, we have further
\begin{align*}	
	\prod_p E_{p,\xi}
	= 	( 1 + O( \omega^{-1} ) )
	\prod_{p \leq \omega} \Big( 1 - p^{-1} + O\Big( \frac{L\log p}{p\log R} \Big) \Big)^{-t}
	\prod_{\textrm{$B$ vertical}} \zeta( 1 + \textstyle\sum_B z_{ij} )^{(-1)^{|B|}}.
\end{align*}
Using the fact that $\zeta(s) = \frac{1}{s-1}( 1 + O(|s-1|) )$ 
for $\re(s) > 1$, it follows that
\begin{align*}
	\prod_p E_{p,\xi} &= 
	\Big( 1 + O\Big( \frac{1}{\omega} + \frac{L\log \omega}{\log R} + \frac{L}{\log R} \Big) \Big)
	\Big( \frac{\phi(W)}{W}	\Big)^{-t}
	\prod_{\textrm{$B$ vertical}} \Big( \sum_B z_{ij} \Big)^{-(-1)^{|B|}} ,
\end{align*}
which concludes the proof upon recalling that $z_{ij} = (1+i\xi_{ij})/\log R$.
\end{proof}

At this stage, the following sieve factors arise.

\begin{definition}[Sieve factor]
	We let
	\begin{align*}
		c_{\chi,2} = \iint_{\R^2} \frac{(1+i\xi)(1+i\xi')}{2 + i(\xi + \xi')} 
		\varphi(\xi)\varphi(\xi')\mathrm{d}\xi\mathrm{d}\xi'.
	\end{align*}
\end{definition}

The last step is to replace the euler product 
$\prod_p E_{p,\xi}$ in Proposition~\ref{thm:gpy:integralsunfolding}
by its approximation obtained in Proposition~\ref{thm:gpy:eulerproductestimate},
and to extend the range of integration back to $\R$.
Again, we refer to~\cite[Appendix~D]{GT:lineqs} for 
the proof of this familiar step.

\begin{proposition}[Refolding integrals]
\label{thm:gpy:integralsrefolding}
	Provided that $1 \leq L \leq \tfrac{c\log R}{\log\omega}$, we have
	\begin{align}
		\label{eq:gpy:integralsrefolding}
		h_{R,W}^t \idotsint\limits_{[-L,L]^{\Omega}} \prod_p E_{p,\xi} 
		\prod_{(i,j) \in \Omega} \varphi( \xi_{ij} ) \mathrm{d}\xi_{ij}
		= c_{\chi,2}^t + O\bigg( e^{-cL^{1/2}} + \frac{1}{\omega} 
		+ \frac{L\log\omega}{\log R}  \bigg).
	\end{align}
\end{proposition}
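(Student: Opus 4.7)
The plan is to substitute the factorised asymptotic from Proposition~\ref{thm:gpy:eulerproductestimate} into the integrand on the left-hand side of~\eqref{eq:gpy:integralsrefolding}, to observe that the resulting main term splits as a $t$-fold product of two-dimensional integrals each equal to $c_{\chi,2}$, and finally to extend the domain of integration from $[-L,L]^{\Omega}$ back to $\R^{\Omega}$ at a cost controlled by the stretched-exponential decay of $\varphi$.

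The first step I would carry out is the identification of vertical subsets. The non-empty vertical subsets of $\Omega = [t] \times [2]$ are exactly $\{(i,1)\}$, $\{(i,2)\}$ and $\{(i,1),(i,2)\}$ as $i$ ranges over $[t]$; tracking the signs $(-1)^{|B|}$ in Proposition~\ref{thm:gpy:eulerproductestimate} and multiplying by $h_{R,W}^t$ then yields
\[
	h_{R,W}^t \prod_p E_{p,\xi} = \bigg(1 + O\Big(\tfrac{1}{\omega} + \tfrac{L\log\omega}{\log R}\Big)\bigg) \prod_{i=1}^{t} \frac{(1+i\xi_{i1})(1+i\xi_{i2})}{2 + i(\xi_{i1}+\xi_{i2})},
\]
uniformly in $\xi \in [-L,L]^{\Omega}$. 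Since the measure $\prod_{(i,j) \in \Omega} \varphi(\xi_{ij})\, \mathrm{d}\xi_{ij}$ is itself a product measure, Fubini factorises the leading integrand into $t$ identical two-dimensional pieces, each of which integrates to $c_{\chi,2}$ once the domain is extended to $\R^2$.

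It then remains to bound the tail contribution from $\R^{\Omega} \smallsetminus [-L,L]^{\Omega}$ and to absorb the multiplicative Euler error. On $\R^{\Omega}$, the rational factor is bounded above by $C \prod_{(i,j)} (1 + |\xi_{ij}|)$, since the denominator $2 + i(\xi_{i1}+\xi_{i2})$ always has modulus at least $2$, while the density $\varphi(\xi) \ll e^{-c|\xi|^{1/2}}$ decays faster than any polynomial; a union bound over which coordinate falls outside $[-L,L]$ then produces a tail of size $O(e^{-cL^{1/2}})$. The multiplicative Euler factor $1 + O(1/\omega + L\log\omega/\log R)$ integrated against the $O(1)$ main piece contributes additively the matching term $O(1/\omega + L\log\omega/\log R)$, completing the proof of~\eqref{eq:gpy:integralsrefolding}.

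The hard part here is essentially not conceptual but bookkeeping, and mirrors the final consolidation step of~\cite[Appendix~D]{GT:lineqs}. The only mildly delicate point is confirming that the polynomial growth of the rational integrand does not spoil the stretched-exponential tail estimate, which is immediate from the super-polynomial decay of $\varphi$ established just after~\eqref{eq:gpy:weighttruncation}.
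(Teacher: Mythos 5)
Your proposal is correct and follows the same route the paper implicitly adopts by deferring to~\cite[Appendix~D]{GT:lineqs}: substitute the Euler-product asymptotic of Proposition~\ref{thm:gpy:eulerproductestimate}, split the resulting rational integrand as a $t$-fold product over $i \in [t]$ of the factor $(1+i\xi_{i1})(1+i\xi_{i2})/(2+i(\xi_{i1}+\xi_{i2}))$ using Fubini and the fact that the only non-empty vertical $B$ are $\{(i,1)\}$, $\{(i,2)\}$ and $\{(i,1),(i,2)\}$, then extend each two-dimensional integral to $\R^2$ at an $O(e^{-cL^{1/2}})$ cost using the stretched-exponential decay of $\varphi$ together with the polynomial bound on the integrand. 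The bookkeeping of the sign $(-1)^{|B|}$, the $O(1)$ bound on the total integral that converts the multiplicative $1+O(1/\omega + L\log\omega/\log R)$ error into an additive one, and the union bound over which coordinate escapes $[-L,L]$ are all handled correctly.
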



We also quote~\cite[Lemma~D.2]{GT:lineqs},
which provides an explicit formula for $c_{\chi,2}$.

\begin{lemma}
\label{thm:gpy:sievefactor}
	We have $c_{\chi,2} = \int_0^\infty |\chi'(x)|^2 \mathrm{d}x$.
\end{lemma}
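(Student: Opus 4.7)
The plan is to view the denominator $2+i(\xi+\xi')$ as a Laplace transform, namely
\begin{align*}
\frac{1}{2+i(\xi+\xi')} = \int_0^\infty e^{-(2+i(\xi+\xi'))x}\,\mathrm{d}x,
\end{align*}
which is legitimate since the real part of the denominator is $2 > 0$. Substituting into the definition of $c_{\chi,2}$ and swapping the order of integration via Fubini --- justified by the super-polynomial decay $\varphi(\xi)\ll e^{-c|\xi|^{1/2}}$, which keeps $(1+i\xi)\varphi(\xi)$ absolutely integrable in $\xi$, with the extra $e^{-2x}$ factor ensuring convergence in $x$ --- one obtains
\begin{align*}
c_{\chi,2} = \int_0^\infty \bigg( \int_\R (1+i\xi)\,\varphi(\xi)\, e^{-(1+i\xi)x}\,\mathrm{d}\xi \bigg)^{\!2}\,\mathrm{d}x.
\end{align*}

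Next I would identify the inner integral with $-\chi'(x)$ pointwise. Differentiating under the integral sign in the representation $\chi(x) = \int_\R \varphi(\xi) e^{-(1+i\xi)x}\,\mathrm{d}\xi$ yields
\begin{align*}
\chi'(x) = -\int_\R (1+i\xi)\,\varphi(\xi)\, e^{-(1+i\xi)x}\,\mathrm{d}\xi,
\end{align*}
which is again justified by the rapid decay of $\varphi$. Since $\chi$ is real-valued, so is $\chi'$, and the square of the inner integral reduces to $\chi'(x)^{2} = |\chi'(x)|^{2}$, yielding the claimed identity.

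The one subtlety is that the text asserts the Fourier representation of $\chi$ only for $x\in[-1,1]$, whereas I use it on all of $[0,\infty)$. This extension is essentially automatic: after factoring out $e^{-x}$, the identity $\chi(x)e^x = \int_\R\varphi(\xi)e^{-i\xi x}\,\mathrm{d}\xi$ reduces to the assertion that the inverse Fourier transform of $\varphi$ agrees with a multiple of $1_{[-1,1]}(x)e^{1-1/(1-x^2)}$, which by the very definition of $\varphi$ holds on all of $\R$; in particular both sides vanish for $|x|>1$, in agreement with $\chi$ being supported in $[-1,1]$. I do not foresee any real obstacle beyond this extension and the routine justification of Fubini and of differentiation under the integral sign, all of which follow from the $e^{-c|\xi|^{1/2}}$ decay of $\varphi$ and the smoothness of $\chi$ at the endpoints $x=\pm 1$ (where all derivatives of $e^{-1/(1-x^2)}$ vanish).
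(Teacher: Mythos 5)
Your proof is correct, and it is essentially the standard argument. The paper does not actually prove this lemma but quotes it directly from Green and Tao (their Lemma~D.2 in the appendix to ``Linear equations in primes''), where the same Laplace-transform identity $\frac{1}{2+i(\xi+\xi')} = \int_0^\infty e^{-(2+i(\xi+\xi'))x}\,\mathrm{d}x$ together with Fubini and the Fourier representation of $\chi$ is used to factorize the double integral into $\int_0^\infty \big(\!-\chi'(x)\big)^2\,\mathrm{d}x$. Your treatment of the technical points --- absolute integrability from the $e^{-c|\xi|^{1/2}}$ decay of $\varphi$, differentiation under the integral sign, and the extension of the Fourier representation of $\chi$ from $[-1,1]$ to all of $\R$ (which holds precisely because $\varphi$ is defined as the Fourier transform of a function supported in $[-1,1]$, so its inverse transform is supported there too) --- is what the cited reference leaves implicit, so your write-up is if anything a bit more careful.
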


We may now combine the previous successive approximations to the original sum
and optimize the parameter $L$ to obtain 
Proposition~\ref{thm:gpy:quantpseudordoverZ}.

\medskip

\textit{Proof of Proposition~\ref{thm:gpy:quantpseudordoverZ}.}
Let $P \geq 1$. 
Combining Propositions~\ref{thm:gpy:divsumunfolding},
\ref{thm:gpy:integralsunfolding} and~\ref{thm:gpy:integralsrefolding},
we see that the average 
$\E_{n \in [P]^d} \prod_{i \in [t]} 
\Lambda_{\chi,R,W} \big[ \psi_i(n) \big]$
is equal to
\begin{align*}
	c_{\chi,2}^t + 
	O\bigg( e^{-cL^{1/2}} (\log R)^{O(1)}
	+ \frac{1}{\omega} 
	+ \frac{L\log\omega}{\log R}
	+ \frac{R^{5t}}{P} \bigg),
\end{align*}
provided that $L \leq \frac{c\log R}{\log\omega}$.
Recall now that $\omega = c_0 \log N$.
Assuming that $P \geq N^{c_1}$,
we choose $L = C(\log\log N)^2$ and 
$R = N^{c_2/t}$ for a small $c_2 > 0$, so that
\begin{align}
\label{eq:gpy:prefinalpseudord}
	\E_{n \in [P]^d} \prod_{i \in [t]} 
	\Lambda_{\chi,R,W} \big[ \psi_i(n) \big]
	= c_{\chi,2}^t + O( (\log N)^{-1+o(1)}).
\end{align}
By Lemma~\ref{thm:gpy:sievefactor}, we have $c_{\chi,2} > 0$
and therefore we may define a renormalized weight 
$\nu \coloneqq c_{\chi,2}^{-1} \Lambda_{\chi,R,W}$, 
which satisfies the desired pseudorandomness asymptotic 
by~\eqref{eq:gpy:prefinalpseudord},
and which majorizes a constant multiple of $\lambda_{b,W}$
by Lemma~\ref{thm:gpy:gpymajorizes}. 
\qed

\section{Quantitative pseudorandomness}
\label{sec:rd}

The goal of this section is to transfer the previous
pseudorandomness asymptotics over $\Z$ to the
setting of a large cyclic group,
and to show that pseudorandomness is preserved
under certain averaging operations.
We also state the generalized Von Neumann theorem
of Green and Tao~\cite[Appendix~C]{GT:lineqs},
in a quantified form.
The relevant notion of pseudorandomness
in our paper is the following.

\begin{definition}[Quantitative pseudorandomness]
\label{thm:rd:pseudorddef}
	Let $D,H \geq 1$ be parameters and let $M$ be a prime. 
	We say that $\nu : \Z_M \rightarrow \R^+$ is $D$-pseudorandom 
	of level $H$ when, for every affine system
	$\theta : \Z_M^d \rightarrow \Z_M^t$ of finite complexity
	such that $d,t,\| \dot{\theta} \| \leq D$,
	\begin{align*}
		\E_{n \in \Z_M^d} \nu\big[ \theta_1(n) \big] \dots \nu\big[ \theta_t(n) \big]
		= 1 + O_D \Big( \frac{1}{H} \Big).
	\end{align*}
\end{definition}

We now let $N$ denote an integer larger than some absolute constant,
and as in the previous section
we fix $\omega = c_0\log N$ and $W = \prod_{p \leq \omega} p$.
We also consider an embedding $[N] \hookrightarrow \Z_M$,
where $M$ is a prime larger than $N$.
We are then interested in finding a pseudorandom majorant over $\Z_M$
for the function $\lambda_{b,W}$ from 
Definition~\ref{thm:gpy:wprimesmeasuredef},
properly extended to a function on $\Z_M$.
Precisely, given a function $f : \Z \rightarrow \C$
with support in $[N]$, 
we define an $M$-periodic 
function $\wt{f}$ at $n \in \Z$ by $\wt{f}( n ) = f( n + \ell M )$,
where $\ell$ is the unique integer such that $n + \ell M \in [M]$,
and that function $\wt{f}$ may in turn be viewed as a function on $\Z_M$.

It is actually relatively simple to construct a pseudorandom majorant 
on $\Z_M$ from the one of
Proposition~\ref{thm:gpy:quantpseudordoverZ},
by cutting $\Z_M^d$ into small boxes as explained 
in~\cite[p.~527]{GT:apsinprimes}.
We rerun this argument here since we need 
to extract explicit error terms from it.

\begin{proposition}[Pseudorandom majorant over $\Z_M$]
\label{thm:rd:pseudordmajorant}
	Let $D \geq 1$.
	There exists a constant $C_D$ such that
	if $N \geq C_D$ and $M \geq N$ is a prime, 
	there exists a $D$-pseudorandom
	weight $\wt{\nu} : \Z_M \rightarrow \R^+$ 
	of level $(\log N)^{1-o(1)}$ such that 
	\begin{align*}
		0 \leq \wt{\lambda}_{b,W} \ll_D \wt{\nu}.
	\end{align*}
\end{proposition}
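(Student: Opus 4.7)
The plan is to periodically extend the $\Z$-pseudorandom weight $\nu$ of Proposition~\ref{thm:gpy:quantpseudordoverZ} to $\Z_M$, and then to verify the pseudorandomness axiom by partitioning $\Z_M^d$ into small boxes on which $\Z_M$-affine systems unfold to $\Z$-affine ones. Identifying $\Z_M$ with $[0,M) \cap \Z$ via representatives, I define $\wt{\nu}(n) = \nu(n)$ on this range and extend $M$-periodically. Since $\lambda_{b,W}$ is supported in $[N] \subset [0,M)$, the majorization $\wt{\lambda}_{b,W} \ll_D \wt{\nu}$ is then immediate from the pointwise bound $\lambda_{b,W} \ll_D \nu$ on $\Z$ supplied by Proposition~\ref{thm:gpy:quantpseudordoverZ}.

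For the pseudorandomness verification, consider an affine system $\theta : \Z_M^d \to \Z_M^t$ of finite complexity with $d, t, \|\dot\theta\| \leq D$. By Fact~\ref{thm:linalg:liftfincplx} (applicable since $M \geq N > 2D$), I lift $\theta$ to $\psi : \Z^d \to \Z^t$ of finite complexity with $\|\dot\psi\| = \|\dot\theta\| \leq D$. I then partition the representatives $[0,M)^d$ into translates $n_0 + [0,Q)^d$ with $Q = \lfloor N^{c_1} \rfloor$, where $c_1$ is the constant from Proposition~\ref{thm:gpy:quantpseudordoverZ}. For $n = n_0 + n'$ on such a box, setting $\beta_i = \psi_i(n_0) \bmod M$ and defining the integer-valued system $\theta'_i(n') = \beta_i + \dot\psi_i(n')$, we have $\theta'(n') \equiv \theta(n) \pmod M$ on the box, while $\theta'$ retains the linear part $\dot\psi$ and hence has the same (finite) complexity and norm $\leq D$. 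I call a box \emph{good} if $\beta_i \in [DQ, M - DQ]$ for every $i$, equivalently if $\theta'_i(n') \in [0,M)$ for all $i$ and $n' \in [0,Q)^d$; on a good box $\wt{\nu}(\theta_i(n)) = \nu(\theta'_i(n'))$, so Proposition~\ref{thm:gpy:quantpseudordoverZ} applied to $\theta'$ with $P = Q$ yields an inner average of $1 + O_D\bigl((\log N)^{-1+o(1)}\bigr)$.

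Control of bad boxes uses the pointwise bound $\nu \ll_\eps N^\eps$: since each $\theta_i$ is non-degenerate by finite complexity, the values $\beta_i$ fall into the boundary slab of width $O_D(Q)$ for only a fraction $O_D(Q/M)$ of boxes, and each bad box contributes at most $N^{\eps t}$ per point, giving a total bad contribution of $O_D(Q N^{\eps t}/M) = O_D(N^{c_1 - 1 + \eps t})$. Choosing $\eps$ small enough that $c_1 + \eps t < 1$ makes this polynomially small in $N$, hence dominated by the good-box error $(\log N)^{-1+o(1)}$, and summing over all boxes establishes pseudorandomness at level $(\log N)^{1 - o(1)}$ as required. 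The main technical point is this boundary book-keeping: it relies on non-degeneracy of the $\theta_i$ (guaranteed by finite complexity) to control the slab frequency, and on a parameter $Q$ satisfying both $Q \geq N^{c_1}$ (for Proposition~\ref{thm:gpy:quantpseudordoverZ} to apply box by box) and $Q \ll M N^{-\eps t}$ (for the bad contribution to be negligible), both of which are compatible when $M \geq N$ and $\eps$ is chosen small enough.
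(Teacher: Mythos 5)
Your proof is correct and follows essentially the same route as the paper's: lift the $\Z_M$-system to a $\Z$-system via Fact~\ref{thm:linalg:liftfincplx}, decompose the domain into small boxes, apply Proposition~\ref{thm:gpy:quantpseudordoverZ} on boxes that avoid wrap-around, and dispatch the remaining boxes. The one structural difference is that the paper uses an \emph{averaged} decomposition $\E_{n \in \Z_M^d} = \E_{m \in [M]^d}\E_{n' \in [P]^d}$ with $P = M^{1/2}$, averaging over every translate $m$ of the box, whereas you tile $[0,M)^d$ into disjoint translates of $[0,Q)^d$ with $Q \approx N^{c_1}$. The averaged version buys two small simplifications: there is no divisibility issue ($Q \nmid M$ leaves partial boxes at the boundary of $[0,M)^d$, which you would need to absorb into the bad count), and the bad fraction is a pointwise slab-measure bound ($\# \{ m : \min_i d(\psi_i(m), M\Z) \leq \|\dot\psi\| P \} \ll P M^{d-1}$) rather than a count of coarse-lattice points landing in a slab of width $O_D(Q)$ — the latter is true under the bounded-coefficient hypothesis $\|\dot\psi\| \leq D$, but needs a short argument about an arithmetic progression modulo $M$ of step $\leq DQ$ hitting a window of width $O(DQ)$ at most $O_D(1)$ times per wrap-around, which you state without proof. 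Conversely, you make explicit the bad-box control via $\nu \ll_\eps N^\eps$, a step the paper only alludes to with ``neglecting the count of bad-boxes averages.'' Both write-ups arrive at the same conclusion; the paper's $P = M^{1/2}$ choice is slightly cleaner to verify, but your version is sound.
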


\begin{proof}
Consider an affine system $\theta : \Z_M^d \rightarrow \Z_M^t$
of finite complexity and such that
$d,t,\| \dot{\theta} \| \leq D$.
By Fact~\ref{thm:linalg:liftfincplx},
we may consider $\theta$ as the
reduction modulo $M$ of 
an affine system $\psi : \Z^d \rightarrow \Z^t$
with norms $\| \psi \|_M = \| \theta \| \leq 2D$
and $\| \dot{\psi} \| =  \| \dot{\theta} \| \leq D$. 
We let further implicit constants depend on $D$ 
in the course of this proof.

Let $\nu$ be the weight from Proposition~\ref{thm:gpy:quantpseudordoverZ},
and define $\wt{\nu} : \Z_M \rightarrow \R^+$ as above.
Choosing another scale $P = M^{1/2}$, 
and duplicating the variable of averaging, 
we obtain
\begin{align}
\label{eq:gpy:variableduplication}
	\E_{n \in [M]^d} \textstyle\prod\limits_{i \in [t]} \wt{\nu} \big[ \psi_i(n) \big]
	= \E_{m \in [M]^d} \E_{n \in [P]^d}
	\textstyle\prod\limits_{i \in [t]} 
	\wt{\nu}\big[ \psi_i(m + n) \big] + O( N^{-1/4} ).
\end{align}
We call an integer $m$ \textit{good} when
$\psi( m + [P]^d ) \subset [M]^t + M \ell$ for some $\ell \in \Z^t$,
and when that is not the case we say that $m$ is \textit{bad}.
When $m$ is good we have, with $\ell \in \Z^t$ as prescribed 
and by~\eqref{eq:gpy:pseudordoverZ}, 
\begin{align}
	\notag
	\E_{n \in [P]^d}
	\textstyle\prod\limits_{i \in [t]} \wt{\nu} \big[ \psi_i(m + n) \big]
	&= 
	\E_{n \in [P]^d}	
	\textstyle\prod\limits_{i \in [t]} \nu \big[ \dot{\psi}_i(n) + (\psi_i(m) - M\ell_i) \big] \\
	\label{eq:gpy:goodboxes}
	&= 1 + O_D( (\log N)^{-1+o(1)} ).
\end{align}
When $m$ is bad, we have
$\min_{i \in [t]} d( \psi_i(m) , M\Z ) \leq \| \dot{\psi} \| P$ 
with respect to the canonical distance 
$d(x,y) = | x - y |$ on $\R$.
Indeed, when that inequality does not hold, we have
\begin{align*}
	\psi(m+]0,P[^d) \cap 
	\{ y \in \R^t : \exists i \in [t] \ \text{such that}\ y_i \in M\Z \} 
	= \varnothing,
\end{align*}
and since $\psi(m+]0,P[^d)$ is connected
it must be contained in one of the
boxes $]0,M[^t + M \ell$, $\ell \in \Z^t$
(it is helpful to draw a picture at this point).
We have thus proven that when $m$ is bad,
there exists $i \in [t]$ and 
$\ell_i \in \Z$ such that
$\psi_i(m) \in \ell_i M + [-O(P),O(P)]$,
and such an $\ell_i$ is necessarily $\ll 1 + \| \psi \|_M \ll 1$.
It is easy to check that the number of such $m \in [M]^d$
is $\ll PM^{d-1} = M^{d-1/2}$.
Inserting the estimate~\eqref{eq:gpy:goodboxes}
on good-boxes averages in~\eqref{eq:gpy:variableduplication},
and neglecting the count of bad-boxes averages,
we obtain the desired asymptotic.
\end{proof}

The notion of pseudorandomness is quite robust
under averaging operations, as demonstrated by
the following proposition, which is needed later on
to majorize certain convolutions of $\lambda_{b,W}$.

\begin{proposition}
\label{thm:rd:pseudordofavg}
	Let $D,H \geq 1$ be parameters and $M$ be a prime.
	Suppose that
	$\nu : \Z_M \rightarrow \R^+$
	is $D$-pseudorandom of level $H$,
	$B$ is a symmetric subset of $\Z_M$
	and $\mu_B = (|B|/M)^{-1} 1_B$.
	Then $\nu' = \tfrac{1}{2}( \nu + \nu \ast \mu_B)$
	is also $D$-pseudorandom of level $H$.
\end{proposition}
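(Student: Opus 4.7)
The plan is to expand $\nu' = \tfrac12(\nu + \nu \ast \mu_B)$ multilinearly in the $t$-correlation and reduce each resulting term to an instance of the pseudorandomness hypothesis for $\nu$ itself, applied to a shifted affine system with the same linear part as $\theta$.

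Fix an affine system $\theta : \Z_M^d \rightarrow \Z_M^t$ of finite complexity with $d,t,\|\dot{\theta}\| \leq D$. Expanding the product gives
\begin{align*}
\E_{n \in \Z_M^d} \prod_{i \in [t]} \nu'\big[\theta_i(n)\big]
= \frac{1}{2^t} \sum_{S \subset [t]} \E_{n \in \Z_M^d} \prod_{i \in S} (\nu \ast \mu_B)\big[\theta_i(n)\big] \prod_{i \notin S} \nu\big[\theta_i(n)\big].
\end{align*}
Unfolding each convolution as an average $(\nu \ast \mu_B)(x) = \E_{b \in B}\, \nu(x - b)$ and exchanging the order of summation, each term in the sum over $S$ becomes
\begin{align*}
\E_{(b_i)_{i \in S} \in B^S} \E_{n \in \Z_M^d} \prod_{i \in [t]} \nu\big[\theta^{(b)}_i(n)\big],
\end{align*}
where $\theta^{(b)}_i(n) = \theta_i(n) - b_i$ for $i \in S$ and $\theta^{(b)}_i(n) = \theta_i(n)$ otherwise.

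The key observation is that $\theta^{(b)}$ is an affine system with linear part $\dot{\theta^{(b)}} = \dot{\theta}$, so both the finite complexity and the norm bound $\|\dot{\theta^{(b)}}\| \leq D$ are automatically inherited from $\theta$. Hence the $D$-pseudorandomness of $\nu$ at level $H$ applies uniformly in $(b_i)$ and yields
\begin{align*}
\E_{n \in \Z_M^d} \prod_{i \in [t]} \nu\big[\theta^{(b)}_i(n)\big] = 1 + O_D(1/H),
\end{align*}
and this estimate is preserved when averaging over $(b_i) \in B^S$. Summing over $S \subset [t]$ with weight $2^{-t}$ and absorbing the factor $2^t \leq 2^D$ into the $O_D$ notation produces the desired asymptotic $1 + O_D(1/H)$.

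There is no real obstacle here: the only thing to verify is the stability of the hypotheses of pseudorandomness under translation of the affine forms, and this is immediate since translation does not affect either the linear part or its norm, nor the complexity of the system. The symmetry of $B$ is not actually needed in this argument, and is presumably recorded only to match conventions used elsewhere in the paper.
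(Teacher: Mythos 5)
Your proof is correct and matches the paper's argument: both expand multilinearly over the $2^t$ ways to choose $\nu$ or $\nu * \mu_B$ in each slot (the paper writes $\E_{\eps\in\{0,1\}^t}$, you write $\sum_{S\subset[t]}$), unfold the convolution as an average of shifts over $B$, and apply the pseudorandomness hypothesis to the shifted systems, which have the same linear part, complexity, and norm as $\theta$. Your remark that the symmetry of $B$ is not actually needed is also accurate; the paper's expression $\nu^{(1)}(x)=\E_{y\in B}\nu(x+y)$ implicitly uses symmetry to identify this with $\nu*\mu_B$, but the sign of the shift is immaterial to the argument.
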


\begin{proof}
Consider an affine system $\theta : \Z_M^d \rightarrow \Z_M^t$
of finite complexity such that $d,t,\|\dot{\theta}\| \leq D$.
Let $\nu^{(0)} = \nu$ and $\nu^{(1)} = \nu \ast \mu_B$,
so that $\nu^{(\eps)}(x) = \E_{y \in B} \nu( x + \eps y )$
for every $\eps \in \{0,1\}$ and $x \in \Z_M$.
Therefore
\begin{align*}
	S &\coloneqq \E_{n \in \Z_M^d} \tfrac{\nu^{(0)} + \nu^{(1)}}{2} \big[ \theta_1(n) \big] \cdots
	\tfrac{\nu^{(0)} + \nu^{(1)}}{2} \big[ \theta_t(n) \big] \\
	&= \E_{\eps \in \{0,1\}^t} \E_{n \in \Z_M^d} 
	\nu^{(\eps_1)}\big[ \theta_1(n) \big]
	\cdots 	\nu^{(\eps_t)}\big[ \theta_t(n) \big] \\
	&= \E_{\eps \in \{0,1\}^t} \E_{y \in B^t} \E_{n \in \Z_M^d}
	\nu\big[ \theta_1(n) + \eps_1 y_1 ] \cdots
	\nu\big[ \theta_t(n) + \eps_t y_t ].
\end{align*}
For every $\eps \in \{0,1\}^t$ and $y \in B^t$,
the system $(\theta_i + \eps_i y_i)_{1 \leq i \leq t}$
has same linear part as $(\theta_i)_{1 \leq i \leq t}$.
Since $\nu$ is $D$-pseudorandom of level $H$, we have
$S = 1 + O_D( H^{-1} )$ as desired.
\end{proof}

We now quote the generalized Von Neumann
theorem of Green and Tao~\cite[Appendix C]{GT:lineqs}.
It is simple to quantify the error term in that result
in terms of the level of pseudorandomness of the weight.

\begin{theorem}[Generalized Von Neumann theorem]
\label{thm:rd:genvonnm}
	Let $d,t,Q,H \geq 1$ and $s \geq 0$ be parameters,
	and let $i \in [t]$ be an indice.
	There exists a constant $D$ depending
	on $d,t,Q$ such that the following holds.
	Suppose that $M > D$ is a prime
	and $\theta : \Z_M^d \rightarrow \Z_M^t$
	is an affine system of finite complexity
	in exact $s$-normal form at $i$, and
	such that $\| \dot{\theta} \| \leq Q$.
	Suppose also that 
	$\nu : \Z_M \rightarrow \R^+$ is $D$-pseudorandom of level $H$,
	and $f_1,\dots,f_t : \Z_M \rightarrow \R$ 
	are functions such that $|f_j| \leq \nu$ for every $j \in [t]$.
	Then we have
	\begin{align*}
		\big| \E_{n \in \Z_M^d} 
		f_1 \big[ \theta_1(n) \big]
		\cdots
		f_t \big[ \theta_t(n) \big]
		\big|^{2^{s+1}}
		\leq \| f_i \|_{U^{s+1}(\Z_M)}^{2^{s+1}} + O_D(H^{-1}).
	\end{align*}
\end{theorem}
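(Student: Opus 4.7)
The plan is to carry out the iterated Cauchy-Schwarz argument of Green and Tao~\cite[Appendix~C]{GT:lineqs} while tracking all error terms in a form compatible with Definition~\ref{thm:rd:pseudorddef}. Without loss of generality, I take $i = 1$. The exact $s$-normal form at $1$ yields a set $J_1 \subset [d]$ of cardinality $s+1$ such that $\dot\theta_1$ depends on every $x_k$ with $k \in J_1$, while each $\dot\theta_j$ for $j \ne 1$ fails to depend on at least one $x_{k(j)} \in J_1$. After a linear change of variables whose norm is bounded in terms of $Q$ (and which is invertible modulo $M$ since $M$ is a prime exceeding $D$), I may assume $J_1 = [s+1]$. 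Fixing an assignment $j \mapsto k(j)$ then induces a partition $[t] \setminus \{1\} = I_1 \sqcup \cdots \sqcup I_{s+1}$ in which each $I_k$ consists of indices $j$ such that $\theta_j$ is independent of $x_k$.

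Next, I apply the Cauchy-Schwarz inequality $s+1$ times, one for each variable $x_k$ with $k \in [s+1]$. At step $k$, the variable $x_k$ is duplicated via a shift $h_k$: the factors $f_j[\theta_j]$ for $j \in I_k$ (independent of $x_k$) are extracted and controlled in terms of $\nu$ via $|f_j| \le \nu$, while all remaining factors acquire an $\omega_k \in \{0,1\}$-indexed doubling. After all $s+1$ steps, writing $T$ for the average to be bounded and $\theta_1^{(\omega)}$ for $\theta_1$ with each $x_k$ ($k \in [s+1]$) replaced by $x_k + \omega_k h_k$, one arrives at
\begin{align*}
	|T|^{2^{s+1}} \le \E_{x \in \Z_M^d,\,h \in \Z_M^{s+1}}\, \Phi(x,h) \prod_{\omega \in \{0,1\}^{s+1}} f_1\big[\theta_1^{(\omega)}(x,h)\big],
\end{align*}
where $\Phi(x,h) \ge 0$ is a product of values of the form $\nu\big[\theta_j^{(\omega)}(x,h)\big]$ for $j \ne 1$ and $\omega$ ranging over appropriate subsets of $\{0,1\}^{s+1}$.

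A final Cauchy-Schwarz application, combined with $|f_1| \le \nu$, decouples $\Phi$ from the $f_1$-cube in a way amenable to pseudorandomness. An invertible linear substitution on $(x_1,\ldots,x_{s+1})$ --- permissible since $\dot\theta_1$ depends non-trivially on each of these variables and $M$ exceeds a suitable constant --- then transforms $\prod_\omega f_1\big[\theta_1^{(\omega)}\big]$ into the standard Gowers cube sum $\E_{y,h'}\prod_\omega f_1(y + \omega\cdot h')$, which by the Gowers-Cauchy-Schwarz inequality is at most $\|f_1\|_{U^{s+1}(\Z_M)}^{2^{s+1}}$. The residual $\Phi$-weight averages are pure $\nu$-products over affine systems whose linear parts have $\|\cdot\|$-norm bounded by a constant $C(d,t,Q)$, and hence each evaluates to $1 + O_D(H^{-1})$ by Definition~\ref{thm:rd:pseudorddef}, provided $D$ is chosen large enough as a function of $d$, $t$, $Q$.

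The principal technical difficulty is the bookkeeping of these auxiliary systems. Each of the $s+1$ Cauchy-Schwarz doublings enlarges the family of affine forms appearing in $\Phi$, and one must verify that their $\|\cdot\|$-norms remain bounded uniformly in $d$, $t$, $Q$, and that they can be handled by Definition~\ref{thm:rd:pseudorddef} --- either directly when they have finite complexity (inherited from the linear independence of the original $\dot\theta_j$'s via Lemma~\ref{thm:linalg:fincplx}), or by iterated application integrating one variable at a time when the combined system collapses parallelism. Since the number of auxiliary systems produced is finite and controlled by $d$, $t$, and $s$, a single choice $D = D(d,t,Q)$ renders pseudorandomness applicable throughout, yielding the stated $O_D(H^{-1})$ error term.
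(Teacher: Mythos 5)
Your overall strategy is the same as the paper's: reduce to $i=1$, $J_1=[s+1]$, run the iterated Cauchy--Schwarz argument of Green and Tao (the paper simply cites \cite[Proposition~7.1'']{GT:lineqs} for this), and invoke the quantitative pseudorandomness of Definition~\ref{thm:rd:pseudorddef} each time the linear forms condition is used. Where you differ from the paper is in assuming, rather than proving, the one non-trivial point on which the whole quantification rests.

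The difficulty is this: to obtain the genuine Gowers norm $\|f_1\|_{U^{s+1}(\Z_M)}$ at the end, the Green--Tao argument requires a change of variables $(x_1,\dots,x_{s+1},y)\mapsto(c_1^{-1}x_1,\dots,c_{s+1}^{-1}x_{s+1},y)$, where $c_k=\dot\theta_1(e_k)$. You describe this as ``an invertible linear substitution on $(x_1,\dots,x_{s+1})$,'' and assert that the residual affine systems have $\|\cdot\|$-norm bounded by $C(d,t,Q)$. This assertion is false as stated: the coefficients of the transformed forms involve $c_k^{-1}\pmod M$, whose representative in $[-M/2,M/2]$ --- which is precisely what $\|\cdot\|_{\T_M}$ measures in Definition~\ref{thm:linalg:norms} --- can be as large as $M/2$ even when $|c_k|\le Q$. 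So the auxiliary systems arising in the pseudorandomness applications need not satisfy $\|\dot\theta\|\le D$, and Definition~\ref{thm:rd:pseudorddef} cannot be invoked directly. You flag the need to ``verify that their $\|\cdot\|$-norms remain bounded,'' but then conclude as though this were automatic; it is not. The paper's proof is essentially devoted to exactly this point: the fix is to make the further change of variables $x_i\mapsto c_ic_{s+1}x_i$ for $1\le i\le s+1$ immediately \emph{before each} application of the linear forms condition inside the Green--Tao argument, which has determinant coprime to $M$ (so is legitimate) and returns the systems under consideration to forms with integer coefficients bounded in terms of $d,t,Q$. Without this (or an equivalent device), your error term is not $O_D(H^{-1})$ for a $D$ depending only on $d,t,Q$, and the theorem fails in the quantitative form needed.

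As a smaller matter, your opening claim that the reduction to $J_1=[s+1]$ is achieved ``after a linear change of variables whose norm is bounded in terms of $Q$'' is off: that reduction is a pure permutation of coordinates and has nothing to do with $Q$; the $Q$-dependent (and, as above, problematic) substitution is the later diagonal one normalizing the coefficients of $\theta_1$. Conflating the two obscures where the real obstacle lies.
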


\begin{proof}
Up to relabeling the $f_j$ and $\theta_j$, 
we may assume that $i=1$.
Up to permutating the base vectors,
we may also assume that the set $J_1$ from
Definition~\ref{thm:linalg:normaldef}
is equal to $[s+1]$.
It then suffices to apply~\cite[Proposition~7.1'']{GT:lineqs},
whose proof invokes twice the pseudorandomness condition
of Definition~\ref{thm:rd:pseudorddef},
under the name ``linear forms condition''.
Note that the argument there 
requires a change of variable
$(x_1,\dots,x_{s+1},y) \mapsto (c_1^{-1} x_1,\dots,c_{s+1}^{-1} x_{s+1},y)$
with respect to the decomposition 
$\Z_M^d = \Z_M^{s+1} \times \Z_M^{d-(s+1)}$,
where $c_k = \dot{\theta}_1(e_k)$.
The condition $M > D \geq \| \dot{\theta} \|$ ensures that
this is possible, however the new forms involved 
may have large size, potentially
not bounded in terms of $\| \dot{\theta} \|$.
Fortunately, it can be verified that making the change of variables
$x_i \mapsto c_i c_{s+1} x_i$, $1 \leq i \leq s+1$
before each application of the linear forms condition 
in the proof of~\cite[Proposition~7.1'']{GT:lineqs}
converts the systems of forms under consideration
back into sytems of bounded size.
(Here we elaborated slightly on the footnote at 
the bottom of~\cite[p.~1822]{GT:lineqs}).
\end{proof}

\section{Translation-invariant equations in the primes}
\label{sec:tsf}

In this Section, we prove Theorem~\ref{thm:intro:primescplx1}.
Our two main tools are the 
transference principle of
Helfgott and de Roton~\cite{HR:rothprimes},
including Naslund's~\cite{Nasl:rothprimes} refinement thereof, 
and the relative generalized Von Neumann
theorem of Green and Tao,
in the quantitative form obtained
in the previous section.
These two tools together transfer the problem of finding
a complexity-one pattern in the primes, to that of finding
one in the integers, and to finish the proof we simply apply
our extension of Shao's result
derived in Appendix~\ref{sec:apx:intg}.

We now formally begin the proof of Theorem~\ref{thm:intro:primescplx1}.
We start with a standard preliminary reduction,
the $W$-trick, which allows us to consider subsets
of an arithmetic progression of modulus $W$ 
in the primes instead.

\begin{theorem}[Theorem~\ref{thm:intro:primescplx1} in $W$-tricked primes]
\label{thm:tsf:wtrickedcplx1}
	Let $V \in \M_{r \times t}(\Z)$ be a translation-invariant matrix
	of rank $r$ and complexity one.
	There exists a constant $C$ depending at most on $r,t,V$
	such that the following holds.
	Let $W = \prod_{p \leq \omega} p$, 
	where $\omega = c_0 \log N$ with 
	$c_0 \in [\tfrac{1}{4},\tfrac{1}{2}]$,
	and let $b \in \Z$ such that $(b,W)=1$.
	Suppose that $A$ is a subset of $[N]$ such that 
	$b + W \cdot A \subset \mathcal{P}$ and
	\begin{align*}
		|A| &= \alpha (W/\phi(W))(\log N)^{-1} N, \\
		\alpha &\geq C(\log\log N)^{-1/25t}.
	\end{align*}
	Then there exists $\by \in A^t$ with distinct coordinates
	such that $V \by = 0$.
\end{theorem}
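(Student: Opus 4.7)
The strategy is the transference program described in the introduction, now quantitatively implementable thanks to the tools developed in Sections~\ref{sec:gpy} and~\ref{sec:rd} and to the extension of Shao's result proven in Appendix~\ref{sec:apx:intg}.

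I would start with a linear parametrization $\psi : \Z^d \twoheadrightarrow \Z^t \cap \Ker(V)$ in $1$-normal form (Proposition~\ref{thm:linalg:kernelparam}), which by Proposition~\ref{thm:linalg:exact1normalform} is in exact $1$-normal form at every index once any zero columns of $V$ have been discarded. Choosing a prime $M \asymp N$ and applying Fact~\ref{thm:linalg:reducenormal}, I would reduce $\psi$ to a system $\theta : \Z_M^d \rightarrow \Z_M^t$ of the same normal-form structure and bounded norm. Proposition~\ref{thm:rd:pseudordmajorant} then furnishes a $D$-pseudorandom weight $\wt\nu : \Z_M \rightarrow \R^+$ of level $(\log N)^{1-o(1)}$ majorizing the $W$-tricked prime measure $\wt{\lambda}_{b,W}$. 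Writing $\lambda_A = \wt{\lambda_{b,W} \cdot 1_A}$, the function $\lambda_A$ has mean $(1+o(1))\alpha$ on $\Z_M$ and is pointwise dominated by $C\wt\nu$.

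The heart of the argument is a dense-model decomposition followed by a generalized Von Neumann estimate. Using the Helfgott--de Roton--Naslund transference principle, whose hypotheses are satisfied thanks to the pseudorandomness of $\wt\nu$ and the choice $\omega \sim c\log N$ built into Proposition~\ref{thm:rd:pseudordmajorant}, I would split $\lambda_A = \lambda_A' + g$ with $0 \leq \lambda_A' \ll 1$ of mean $(1+o(1))\alpha$ and $\|g\|_{U^2(\Z_M)} \ll \delta$ for a parameter $\delta$ to be chosen. Expanding $\E_n \sprod_i \lambda_A[\theta_i(n)]$ into $2^t$ contributions and isolating the pure $\lambda_A'$ term, every other contribution carries at least one factor of $g$ and has all other factors dominated by $C\wt\nu$; Theorem~\ref{thm:rd:genvonnm} applied at an index where a $g$ appears, and where $\theta$ is in exact $1$-normal form, bounds each such error term by $\delta + O((\log N)^{-c})$ after taking the appropriate root. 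Summing over the $2^t - 1$ error terms gives a total transference error of order $C_t ( \delta + (\log N)^{-c} )$ on the pattern count.

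It remains to control the main term and the degenerate contributions. The extension of Shao's result from Appendix~\ref{sec:apx:intg}, applied to the bounded model $\lambda_A'$, produces at least $c\alpha^{C_0} M^d$ patterns $\theta(n)$ valued in the support of $\lambda_A'$, provided $\alpha \geq (\log N)^{-\gamma_0}$ for explicit $\gamma_0, C_0$ depending on $t$ and $V$. Taking $\delta$ a small power of $\alpha$ makes the transference error negligible in comparison to the main term, and Lemma~\ref{thm:linalg:trivsols} bounds the number of $V$-solutions with two equal coordinates by $\ll_V N^{t-r-1}$, which is likewise dominated. This yields a distinct-coordinates solution $\by \in A^t$ of $V\by = 0$. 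The main obstacle is the parameter balancing: one must simultaneously ensure that $\omega$ reaches $c\log N$ so that $\lambda_A'$ can be taken bounded, that the Von Neumann loss $(\log N)^{-c}$ stays below the integer-level threshold $\alpha^{C_0}$, and that the conversion from logarithmic bounds on $[N]$ to doubly-logarithmic bounds on $\mathcal{P}_N$ is carried through cleanly; tracking these dependencies is what yields the admissible exponent $1/25t$ in the density.
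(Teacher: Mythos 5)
Your overall skeleton matches the paper's Section~\ref{sec:tsf}: parametrize via Propositions~\ref{thm:linalg:kernelparam} and~\ref{thm:linalg:exact1normalform}, reduce modulo a prime $M \asymp N$ via Fact~\ref{thm:linalg:reducenormal}, majorize by the pseudorandom weight from Proposition~\ref{thm:rd:pseudordmajorant}, expand multilinearly, control error terms by Theorem~\ref{thm:rd:genvonnm}, and feed the main term into the Appendix result. However there are two concrete gaps in the way you set up the main term, and both matter for the final density exponent.

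First, you cannot take the smoothed function $\lambda'_A$ to satisfy $0 \leq \lambda'_A \ll 1$. With $\lambda'_A = \lambda_A \ast \mu_B$ and $|B| \asymp N^{1/2}$ (which is forced by the constraints $\delta^{-4}\log\eps^{-1} \leq c\log N$), the trivial pointwise bound is only $\lambda'_A \ll \alpha M / |B| \asymp \alpha N^{1/2}$, which is enormous. A genuinely bounded dense model is what the Green--Tao dense model theorem would give you, but that route requires $\omega \sim c\log\log N$ and destroys the doubly-logarithmic density gain. The Helfgott--de Roton efficient transference avoids this precisely by settling for $\|\lambda'_A\|_p \ll_p 1$ for large even $p$ (Naslund's $L^p$ restriction estimate), and then extracting a set $A' = \{\lambda'_A \geq \alpha/2\}$ of density $\gg_\kappa \alpha^{1+\kappa}$ via Paley--Zygmund (this is Proposition~\ref{thm:tsf:omegaAdense}). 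You need this level-set step before you can apply Proposition~\ref{thm:intg:intgcplx1}, which is a statement about sets, not unbounded functions; simply invoking "the bounded model $\lambda'_A$" papers over this.

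Second, the lower bound that Proposition~\ref{thm:intg:intgcplx1} delivers is $\exp[-C\alpha^{-24t}]\cdot N^{t-r}$, not $c\,\alpha^{C_0}M^d$. This is a doubly-exponential, not polynomial, dependence on $\alpha^{-1}$, and it is exactly what drives the threshold: to beat the error term $(\log N)^{-1/4+o(1)}$ from Proposition~\ref{thm:tsf:errortermupperbound} (together with the $\eps,\delta$ contributions, which must satisfy $\eps^{-1},\delta^{-1}\ll(\log N)^{c}$), one needs $\exp[-C\alpha^{-24t}] \geq (\log N)^{-c}$, i.e.\ $\alpha^{-24t} \ll \log\log N$, i.e.\ $\alpha \gg (\log\log N)^{-1/24t}$, and after accounting for the $\alpha^{1+\kappa}$ loss from the level-set step and the degenerate-solution subtraction this becomes $(\log\log N)^{-1/25t}$. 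If the pattern-count bound were polynomial in $\alpha$ as you wrote, the parameter balance would instead give a power of $\log N$ for the density, which contradicts the theorem being proved. Also, the Appendix proposition carries no hypothesis of the form $\alpha \geq (\log N)^{-\gamma_0}$: the counting lower bound holds unconditionally, and the density restriction only emerges when you demand that the count exceed the degenerate solutions $\ll N^{t-r-1}$ from Lemma~\ref{thm:linalg:trivsols}.
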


\textit{Proof that Theorem~\ref{thm:tsf:wtrickedcplx1} 
implies Theorem~\ref{thm:intro:primescplx1}}.

Consider a subset $A$ of $\mathcal{P}_N$ of density $\alpha$;
we may certainly assume that $\alpha \geq CN^{-1/4}$,
and in particular that $N$ is large enough.
Let $W = \prod_{p \leq \omega} p$, 
where $\omega = \frac{1}{4} \log N$,
and let $N' = \lfloor N/W \rfloor = N^{3/4+o(1)}$ 
(by the prime number theorem) be another scale.
By~\cite[Lemma~2.1]{HR:rothprimes}, there exists
$(b,W) = 1$ such that
$A' = \{ n \in [N'] : b + W n \in A \}$
has size $\gg \alpha (W/\phi(W)) (\log N')^{-1} N'$.
Note that $\omega \sim \frac{1}{3} \log N'$
as $N \rightarrow \infty$,
and since $b + W \cdot A' \subset A$,
every solution $\by \in (A')^t$ to $V\by = 0$
with distinct coordinates induces one in $A^t$, 
by translation-invariance and homogeneity.
Applying then Theorem~\ref{thm:tsf:wtrickedcplx1} 
to $A' \subset [N']$ concludes the proof.
\qed

\medskip

From now on, we work under the hypotheses
of Theorem~\ref{thm:tsf:wtrickedcplx1}.
First, we fix a translation-invariant
matrix $V \in \M_{r \times t}(\Z)$
of complexity one,
and without loss of generality
we may assume that $t \geq 3$ and $V$ has no zero columns.
Via Propositions~\ref{thm:linalg:kernelparam}
and~\ref{thm:linalg:exact1normalform},
we can choose a linear parametrization
$\psi : \Z^d \twoheadrightarrow \Z^t \cap \Ker_\Q(V)$
in exact $1$-normal form over $\Z$ at every $i \in [t]$.
We assume from now on that $N$ is large enough with
respect to $d,t,\psi,V$,
and we let further implicit and explicit constants
depend on those parameters.
We will need to consider functions 
with support in $[-2N,2N]_\Z$,
and to analyze those we embed $[-2N,2N]_\Z$ 
in a large cyclic group $\Z_M$,
where $M$ is a prime between 
$4(\|V\| + 1) \cdot N$ and $8(\|V\| + 1) \cdot N$
chosen via Bertrand's postulate.
By Fact~\ref{thm:linalg:reducenormal},
the linear map $\psi$ reduces modulo $M$ to a linear map
$\theta : \Z_M^d \twoheadrightarrow \Ker_{\Z_M}(V)$
in exact $1$-normal form over $\Z_M$ at every $i \in [t]$, 
and such that $\| \theta \| = \| \psi \|$;
we work exclusively with that map from now on.

Next, we consider an integer $N \geq 1$ and 
a constant $c_0 \in [\tfrac{1}{4},\tfrac{1}{2}]$, 
and we fix 
\begin{align*}
	W = \prod_{p \leq \omega} p,
	\quad\quad
	\omega = c_0 \log N,
	\quad\quad
	b \in \Z : (b,W) = 1.
\end{align*}
We then consider a subset $A \subset [N]$ such that 
$|A| = \alpha \frac{W}{\phi(W)} (\log N)^{-1} \cdot N$
and $b + W \cdot A \subset \mathcal{P}$.
Accordingly, we define the normalized
indicator function of $A$ by
\begin{align*}
	\lambda_A = L \frac{\phi(W)}{W} (\log N) \cdot 1_A,
\end{align*}
where $L = M/N \asymp 1$.
With this normalization, we have $\E_{[M]} \lambda_A = \alpha$ and
$0 \leq \lambda_A \ll \lambda_{b,W}$,
recalling Definition~\ref{thm:gpy:wprimesmeasuredef}.

Given a function $f : \Z \rightarrow \C$
with support in $[-2N,2N]$,
we define an $M$-periodic function $\breve{f}(n) = 0$
at $n \in \Z$ by $\breve{f}(n) = f( n + \ell M)$,
where $\ell$ is the unique integer
such that $n + \ell M \in [-M/2,M/2]_\Z$,
and $\breve{f}$ may then be considered as a function on $\Z_M$.
When $f$ has support in $[N]$, as is the case
for $\lambda_{b,W}$, this coincides
with the definition of $\wt{f}$ from Section~\ref{sec:rd}.
To alleviate the notation, we now identify
functions $f : \Z \rightarrow \C$
with support in $[-2N,2N]$ with their 
periodic counterpart $\breve{f}$.
Most of the analysis we do next takes place 
on $\Z_M$, and Fourier transforms,
convolutions, $L^p$ and $U^k$ norms are 
normalized accordingly.
With these notations in place, we now work 
with the following pattern-counting operator.

\begin{definition}
\label{thm:tsf:ToperatoroverZM}
	We define the operator $T$ on functions $f_1,\dots,f_t : \Z_M \rightarrow \R$ by
	\begin{align*}
		T( f_1, \dots, f_t ) = 
		\E_{n \in \Z_M^d} f_1 \big[ \theta_1(n) \big] \dots f_t \big[ \theta_t(n) \big].
	\end{align*}
\end{definition}

If need be, we can always return to averages over $\Z$ via the following observation.

\begin{lemma}
\label{thm:tsf:ToperatoroverZ}
	For functions $f_1,\dots,f_t : \Z_M \rightarrow \R$
	with support in $[-2N,2N]$, we have
	\begin{align*}
		T( f_1, \dots, f_t )
		= M^{-(t-r)} \sum_{\substack{y \in [-2N,2N]_\Z^t \,: \\ Vy = 0}}
		f_1 (y_1) \dots f_t(y_t).
	\end{align*}
\end{lemma}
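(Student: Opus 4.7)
The plan is to expand the definition of $T$ as a sum over $n \in \Z_M^d$ and then use the fact that $\theta$ is a surjection onto $\Ker_{\Z_M}(V)$ to convert this into a sum over the kernel; the assumption on the size of $M$ will then allow the kernel sum to be lifted back to an integer sum.

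First, since $\theta : \Z_M^d \twoheadrightarrow \Ker_{\Z_M}(V)$ is a $\Z_M$-linear surjection and $M$ is prime, $\Ker(\theta)$ is a subspace of $\Z_M^d$ of dimension $d - (t-r)$, so every $y \in \Ker_{\Z_M}(V)$ has exactly $M^{d-(t-r)}$ preimages under $\theta$. Unfolding the definition, I would then write
\begin{align*}
    T(f_1,\dots,f_t)
    = M^{-d} \sum_{y \in \Ker_{\Z_M}(V)} |\theta^{-1}(y)| \prod_{i \in [t]} f_i(y_i)
    = M^{-(t-r)} \sum_{y \in \Ker_{\Z_M}(V)} \prod_{i \in [t]} f_i(y_i).
\end{align*}

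Next, I would identify $\Z_M$ with the set of integer representatives $[-M/2, M/2]_\Z$, so that $\Ker_{\Z_M}(V)$ corresponds to the set of $y \in [-M/2, M/2]_\Z^t$ with $V y \equiv 0 \pmod{M}$. Because the $f_i$ are supported in $[-2N, 2N] \subset [-M/2, M/2]_\Z$, only such $y$ with every coordinate in $[-2N, 2N]_\Z$ contribute to the sum. For any such $y$, the vector $Vy \in \Z^r$ satisfies $\| V y \|_\infty \leq 2 \|V\| N$, and since $M \geq 4(\|V\|+1) N$ we have $\| V y \|_\infty < M/2$. Hence the congruence $V y \equiv 0 \pmod M$ forces $V y = 0$ in $\Z^r$. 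Conversely, every $y \in [-2N, 2N]_\Z^t$ with $V y = 0$ reduces to an element of $\Ker_{\Z_M}(V)$ with the same image under the $f_i$.

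Combining these two steps yields
\begin{align*}
    T(f_1,\dots,f_t) = M^{-(t-r)} \sum_{\substack{y \in [-2N,2N]_\Z^t \,:\\ V y = 0}} f_1(y_1) \cdots f_t(y_t),
\end{align*}
as required. There is no real obstacle here; the only point worth verifying carefully is the choice of $M$ (which was arranged in the paragraph before Definition~\ref{thm:tsf:ToperatoroverZM} precisely to be large enough both to guarantee, via Fact~\ref{thm:linalg:reducenormal}, that $\theta$ is onto $\Ker_{\Z_M}(V)$ with the right rank, and to guarantee the wrap-around is trivial for the coordinates of $V y$).
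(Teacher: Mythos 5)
Your proof is correct and takes essentially the same route as the paper: both use the surjectivity of $\theta$ onto $\Ker_{\Z_M}(V)$ together with the uniform fiber size $M^{d-(t-r)}$ to convert the average over $\Z_M^d$ into a normalized sum over the kernel, then invoke the support hypothesis and the lower bound $M > 2\|V\|\cdot 2N$ to lift $Vy \equiv 0 \pmod M$ to $Vy=0$ over $\Z$. You spell out the fiber-size and wrap-around bounds slightly more explicitly, but there is no difference in substance.
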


\begin{proof}
Since $\theta$ is a surjection onto
$\Ker_{\Z_M}(V)$, and the fibers 
$\#\{ x \in \Z_M^d : \theta(x) = y \}$
have uniform size when $y$ ranges over $\Ker_{\Z_M}(V)$, we have
\begin{align*}
	T( f_1, \dots, f_t )
	&= \E_{y \in \Z_M^t : V y = 0} f_1 (y_1) \dots f_t(y_t) \\
	&= M^{-(t-r)} \textstyle\sum_{y \in \Z_M^t : V y = 0} f_1 (y_1) \dots f_t(y_t).
\end{align*}
Since the $f_i$ have support in $[-2N,2N]$,
we may restrict the summation to $y \in [-2N,2N]_\Z^t$,
and since $M > 2\| V \| N$, the identity $Vy = 0$
holds in $\Z$ for such $y$.
\end{proof}

We now introduce two parameters $\delta \in (0,1]$
and $\eps \in (0,c\,]$.
We also fix an auxiliary Bohr set of $\Z_M$
(see Definition~\ref{thm:intg:bohrsetdef})
defined by
\begin{align*}
	\Gamma &= \{ r \in \Z_M : |\wh{\lambda}_A(r)| \geq \delta  \} \cup \{1\}, \\
	B &= B( \Gamma , \eps ).
\end{align*}
The presence of $1$ in the frequency set
guarantees that the Bohr set is contained
in an interval $[-\eps M,\eps M]$.
As is common in the transference literature for
three-term arithmetic progressions~\cite{G:rothprimes,GT:restriction,HR:rothprimes,Nasl:rothprimes},
we work with a smooth approximation of $\lambda_A$, 
namely the convolution over $\Z$ given by
\begin{align*}
	\lambda'_A = \lambda_A \ast \lambda_B,
\end{align*}
where $\lambda_B = |B|^{-1} 1_B$. 
Provided that $\eps$ is small enough,
we see that the support of $\lambda'_A$
is contained in $[-2N,2N]$.
Since $M > 2N$, we may also consider 
$\lambda'_A : \Z_M \rightarrow \R$
as the normalized convolution over $\Z_M$ given by
\begin{align}
\label{eq:tsf:omegaAdef}
	\lambda'_A = \lambda_A \ast \mu_B,
\end{align}
where $\mu_B = (|B|/M)^{-1} 1_B$.
To show that $\lambda'_A$ is close to $\lambda_A$ in a Fourier $\ell^4$ sense,
we need to call on the restriction estimates 
of Green and Tao~\cite{GT:restriction},
themselves based on an envelopping sieve of 
Ramaré and Ruzsa~\cite{RR:additive};
these estimates were in turn specialized
to the case of a large modulus $\omega$
by Helfgott and de Roton~\cite{HR:rothprimes},
and an alternative approach to those 
can be found in a blog post of Tao~\cite{tao:restrictionext}.

\begin{proposition}
\label{thm:tsf:U2bound}
	We have $\| \lambda_A - \lambda'_A \|_{U^2} \ll \eps^{1/4} + \delta^{1/4}$.
\end{proposition}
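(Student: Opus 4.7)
The plan is to expand $\| \lambda_A - \lambda'_A \|_{U^2}^4$ on $\Z_M$ via Fourier analysis and then dichotomize the resulting frequency sum using $\Gamma$. Since $\lambda'_A = \lambda_A \ast \mu_B$ in the normalized sense on $\Z_M$, we have $\wh{\lambda'_A}(r) = \wh{\lambda_A}(r)\, \wh{\mu_B}(r)$ for every $r \in \Z_M$, so by the standard Plancherel expansion of the $U^2$ norm,
\begin{align*}
	\| \lambda_A - \lambda'_A \|_{U^2}^4 = \sum_{r \in \Z_M} |\wh{\lambda_A}(r)|^4 \, |1 - \wh{\mu_B}(r)|^4.
\end{align*}

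First I would split the sum at $\Gamma$. For $r \in \Gamma$, the very definition of $B = B(\Gamma,\eps)$ forces $\| rx/M \|_\T \leq \eps$ for every $x \in B$; a Taylor expansion of the exponential and averaging then give $|1 - \wh{\mu_B}(r)| \ll \eps$. For $r \notin \Gamma$, one has $|\wh{\lambda_A}(r)| < \delta$ by construction, while $|1 - \wh{\mu_B}(r)| \leq 2$ trivially.

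To handle the two resulting contributions, I would invoke the restriction theorem of Green and Tao~\cite{GT:restriction}, in the form adapted to the regime $\omega \sim c_0 \log N$ by Helfgott and de Roton~\cite{HR:rothprimes} (cf.~also the exposition~\cite{tao:restrictionext}). This provides a bound of the form $\sum_r |\wh{\lambda_A}(r)|^q \ll 1$ for some fixed $q \in (2,3]$. Combined with the trivial inequality $\| \wh{\lambda_A} \|_\infty \leq \alpha \leq 1$, H\"older's inequality then yields
\begin{align*}
	\sum_{r \in \Gamma} |\wh{\lambda_A}(r)|^4 |1 - \wh{\mu_B}(r)|^4 &\ll \eps^4 \sum_r |\wh{\lambda_A}(r)|^4 \ll \eps^4, \\
	\sum_{r \notin \Gamma} |\wh{\lambda_A}(r)|^4 |1 - \wh{\mu_B}(r)|^4 &\ll \delta \sum_r |\wh{\lambda_A}(r)|^3 \ll \delta.
\end{align*}
Summing these two contributions gives $\| \lambda_A - \lambda'_A \|_{U^2}^4 \ll \eps^4 + \delta$, and extracting the fourth root yields the stated estimate (in fact with $\eps$ in place of $\eps^{1/4}$ in the first term).

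The main obstacle is securing the restriction estimate at the large modulus regime $\omega \sim c_0 \log N$, since the original Green-Tao argument is tailored to a small $\omega$; this is precisely the point of the Helfgott-de Roton refinement, which was developed to enable exactly this kind of efficient transference. Once that ingredient is available, the Bohr set and Fourier splitting arguments sketched above are routine.
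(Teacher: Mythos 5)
Your proof is correct and follows essentially the same route as the paper: expand $\|\lambda_A - \lambda'_A\|_{U^2}^4$ as a Fourier fourth-moment sum, split according to the threshold $\delta$ defining $\Gamma$, bound $|1-\wh{\mu}_B(r)|$ on $\Gamma$ via the Bohr-set radius, and close both pieces with the restriction estimate of Green--Tao in the large-$\omega$ form of Helfgott--de Roton (\cite[Lemma~2.2]{HR:rothprimes}). The only cosmetic difference is that the paper bounds $|1-\wh{\mu}_B(r)|^4 \ll \eps$ on $\Gamma$ (keeping one power of $\eps$), whereas you keep all four powers and obtain the slightly sharper $\eps^4 + \delta$, which of course still implies the stated bound.
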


\begin{proof}
	By~\cite[Lemma~2.2]{HR:rothprimes},
	we have $\sum_r |\wh{\lambda}_A(r)|^q \ll_q 1$ for any $q > 2$.
	Therefore,
	\begin{align*}
		\| \lambda_A - \lambda'_A \|_{U^2}^4
		&= \sum_r |\wh{\lambda}_A(r)|^4 |1 - \wh{\mu}_B(r)|^4 \\
		&\ll \eps \sum_{r :\, |\wh{\lambda}_A(r)| \geq \delta} |\wh{\lambda}_A(r)|^4
		+ \delta \sum_{r :\, |\wh{\lambda}_A(r)| \leq \delta } |\wh{\lambda}_A(r)|^3 \\
		&\ll \eps + \delta,
	\end{align*}
	where we used the fact that 
	$|1-\wh{\mu}_B(r)| = | \E_{x \in B} ( 1 - e_N(rx) ) | \leq 2\pi \eps$ 
	for all $r \in \Gamma$.
\end{proof}

The structure of our argument is now as follows:
we compare the counts $T(\lambda_A,\dots,\lambda_A)$
and $T(\lambda'_A,\dots,\lambda'_A)$, which we expect to be
close by Proposition~\ref{thm:tsf:U2bound} and the
heuristic that ``the $U^2$ norm controls complexity one averages".

\begin{remark}[Multilinear expansion]
\label{thm:tsf:multilinexpansion}
	By multilinearity,
	\begin{align}
	\label{eq:tsf:multilinexpansion}
		T( \lambda_A, \dots , \lambda_A )
		= T ( \lambda'_A, \dots, \lambda'_A )
		+ \sum T( \ast, \dots , \lambda_A - \lambda'_A , \dots , \ast).
	\end{align}
	where the sum is over $2^t - 1$ terms and the
	stars stand for functions equal to $\lambda'_A$
	or $\lambda_A - \lambda'_A$.
\end{remark}

To estimate the main term in~\eqref{eq:tsf:multilinexpansion},
that is, $T(\lambda'_A,\dots,\lambda'_A)$,
we invoke a key transference estimate of Helfgott and de Roton~\cite{HR:rothprimes}, 
which essentially allows us to consider $\lambda'_A$ as
a subset of the integers of density $\alpha^{2}$.
It is further possible, by a result of Naslund\footnote{
Here we implicitely refer to the first version of Naslund's preprint,
because the argument there is simpler, and we do not seek very sharp
bounds on the exponent.}~\cite{Nasl:rothprimes}, 
to obtain an exponent $1+o(1)$ instead of $2$, and we 
choose to work with that more efficient version,
even though it is possible to
derive Theorem~\ref{thm:intro:primescplx1} with a smaller exponent without it.
This is because we wish to exhibit that our argument preserves the exponent
in Szemerédi-type theorems in the integers, 
in the sense of Proposition~\ref{thm:tsf:conversion} below.

\begin{proposition}
\label{thm:tsf:omegaAdense}
	Suppose that $\delta^{-4} \log \eps^{-1} \leq c\log N$.
	Then for any $\kappa > 0$, 
	the level set ${A' = \{ \lambda'_A \geq \alpha/2 \}}$
	has density $\gg_\kappa \alpha^{1+\kappa}$ in $\Z_M$.
\end{proposition}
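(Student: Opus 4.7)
The plan is to use Hölder's inequality to convert the $L^1$ mass of $\lambda'_A$ concentrated above the threshold $\alpha/2$ into a lower bound on $|A'|/M$, at the cost of invoking an $L^q$-type restriction bound on the smoothed function $\lambda'_A$. The exponent $1+\kappa$ will then arise by taking $q$ sufficiently large.

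First I would note that $\mu_B$ is a probability measure on $\Z_M$, so convolution preserves the mean: $\E_{\Z_M} \lambda'_A = \E_{\Z_M} \lambda_A = \alpha$. The mass contributed by points at which $\lambda'_A < \alpha/2$ is at most $\alpha/2$, so we automatically have $\E \lambda'_A \cdot 1_{A'} \geq \alpha/2$. For any $q > 1$, Hölder's inequality then gives
\begin{align*}
	\alpha/2 \;\leq\; \| \lambda'_A \|_{L^q(\Z_M)} \cdot (|A'|/M)^{1-1/q}.
\end{align*}

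The core technical step is to establish a uniform bound $\|\lambda'_A\|_{L^q} \leq C_q$, with $C_q$ independent of $\alpha$ and $N$ (though possibly growing with $q$). This is precisely the content of the restriction-type bound of Naslund~\cite{Nasl:rothprimes}, refining that of Helfgott and de Roton~\cite{HR:rothprimes}: the Fourier restriction estimate $\sum_r |\wh{\lambda}_A(r)|^p \ll_p 1$ in the primes for $p > 2$ (already used in Proposition~\ref{thm:tsf:U2bound}), combined with the smoothing effect of the convolution by $\mu_B$, yields such $L^q$ control precisely under the compatibility hypothesis $\delta^{-4} \log \eps^{-1} \leq c \log N$. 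I expect this to be the main obstacle: it requires carefully splitting the Fourier transform $\wh{\lambda'_A}(r) = \wh{\lambda}_A(r)\wh{\mu}_B(r)$ according to whether $r$ lies in the spectrum $\Gamma$ (where $\wh{\mu}_B(r)$ is close to $1$ but $|\Gamma| = O_\delta(1)$) or outside $\Gamma$ (where $|\wh{\lambda}_A(r)| \leq \delta$ contributes cancellation), and controlling the resulting sums via the enveloping sieve machinery, while tracking the dependence on $\delta$, $\eps$.

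Once such an $L^q$ bound is in hand, rearranging the Hölder inequality yields
\begin{align*}
	|A'|/M \;\geq\; \big( \alpha/(2 C_q) \big)^{q/(q-1)}.
\end{align*}
Given $\kappa > 0$, I would choose $q = 1 + \kappa^{-1}$, so that $q/(q-1) = 1 + \kappa$, producing the claimed density bound $|A'|/M \gg_\kappa \alpha^{1+\kappa}$. This last step is purely arithmetic; all the substantive content of the proposition lies in the uniform $L^q$ restriction estimate for $\lambda'_A$, which is where Naslund's refinement of the Helfgott--de Roton transference does its work and justifies the technical hypothesis relating $\delta$, $\eps$, and $\log N$.
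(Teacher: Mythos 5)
Your proposal is correct and follows essentially the same route as the paper: show $\E\lambda'_A=\alpha$, invoke Naslund's restriction estimate to get $\|\lambda'_A\|_{L^q}\ll_q 1$ (the hypothesis on $\delta,\eps$ is exactly what guarantees $|B|\geq N^{1/2}$, which that estimate requires), and conclude by H\"older applied to the split $\E\lambda'_A\cdot 1_{A'}\geq\alpha/2$. This last step is precisely the $p$-th moment Paley--Zygmund inequality that the paper cites, so the only difference is that you spell it out rather than naming it.
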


\begin{proof}
Recalling~\eqref{eq:tsf:omegaAdef},
we see that $\E \lambda'_A = \E \lambda_A = \alpha$. 
By Selberg's sieve or the restriction estimate used 
in the proof of Proposition~\ref{thm:tsf:U2bound}, we have 
\begin{align*}
 	\#\{ r : |\wh{\lambda}_A(r)| \geq \delta \} 
 	\leq \delta^{-4} \| \wh{\lambda}_A \|_4^4 \ll \delta^{-4},
\end{align*}
and therefore $|B| \geq \eps^{|\Gamma|} N \geq N^{1/2}$
under our assumptions on $\eps$ and $\delta$.
By~\cite[Proposition~2]{Nasl:rothprimes},
we deduce that $\| \lambda'_A \|_{p} \ll_p 1$ 
for any even $p \geq 4$,
and the proposition then follows from the $p$-th
moment version of the classical Paley-Zygmund inequality.
\end{proof}

Applying our statistical, complexity-one extension of 
Shao's result in the integers, we can now obtain
a lower bound on the average of $\lambda'_A$ 
over $\psi$-configurations.

\begin{proposition}[Main term]
\label{thm:tsf:maintermlowerbound}
	Suppose that $\delta^{-4} \log \eps^{-1} \leq c\log N$.
	We have
	\begin{align*}
		T( \lambda'_A,\dots,\lambda'_A ) 
		\geq \exp\big[ \! - C_\kappa \alpha^{ -24t - \kappa } \big]
	\end{align*}
	for every $\kappa > 0$.
\end{proposition}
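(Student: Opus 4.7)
The plan is to reduce $T(\lambda'_A,\dots,\lambda'_A)$ to a count of $\psi$-patterns in the large level set $A' = \{\lambda'_A \geq \alpha/2\}$ produced by Proposition~\ref{thm:tsf:omegaAdense}, and then to invoke the statistical, complexity-one extension of Shao's~\cite{Shao:configs} result developed in Appendix~\ref{sec:apx:intg}.

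First, since $\lambda'_A \geq (\alpha/2) \cdot 1_{A'}$ pointwise and $T$ is multilinear on nonnegative functions, one immediately has $T(\lambda'_A,\dots,\lambda'_A) \geq (\alpha/2)^t \cdot T(1_{A'},\dots,1_{A'})$. By Proposition~\ref{thm:tsf:omegaAdense} the set $A'$ has density $\gg_\kappa \alpha^{1+\kappa}$ in $\Z_M$, and since the support of $\lambda'_A$ is contained in $[-2N,2N]_\Z$, the set $A'$ is naturally identified with a subset of the integer interval $[-2N,2N]_\Z \hookrightarrow [M]$ of the same size. Lemma~\ref{thm:tsf:ToperatoroverZ} then expresses $T(1_{A'},\dots,1_{A'})$ as a properly normalized count of integer solutions $\mathbf{y} \in (A')^t$ to $V\mathbf{y}=0$, so the task reduces to lower-bounding this integer pattern count in a set of density $\rho \gg_\kappa \alpha^{1+\kappa}$.

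Second, one applies the statistical complexity-one extension of Shao's theorem from Appendix~\ref{sec:apx:intg}: for a subset $B \subset [M]$ of density $\rho$, the number of $\psi$-patterns in $B$ is at least $M^d \cdot \exp[-C_{\kappa'} \rho^{-\gamma-\kappa'}]$, where the exponent $\gamma$ is determined by the linear-algebraic data of the complexity-one system and, in our normalization, is bounded by $24t$ (up to $o(t)$ corrections absorbable into $\kappa'$). Feeding in $\rho \gg \alpha^{1+\kappa}$ yields a lower bound of shape $\exp[-C \alpha^{-(1+\kappa)(24t + \kappa')}]$, and collecting the $(\alpha/2)^t$ prefactor and the $M^{-(t-r)}$ loss from Lemma~\ref{thm:tsf:ToperatoroverZ} into the exponential gives the announced bound $\exp[-C_\kappa \alpha^{-24t-\kappa}]$ after a final relabelling of $\kappa$.

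The main obstacle is therefore entirely deferred to Appendix~\ref{sec:apx:intg}. There one must (i) extend Shao's Fourier-analytic density-increment argument, which he carried out for the particular midpoints system $\psi(x)=(x_0+x_i+x_j)_{1 \leq i \leq j \leq d}$, to an arbitrary system of complexity one in exact $1$-normal form, handling the additional bookkeeping caused by inhomogeneous linear forms and multiplicity in $\psi$; (ii) convert Shao's existence statement into a quantitative lower bound on the number of patterns, of the form $\exp[-C\rho^{-\gamma-\kappa'}]$; and (iii) verify that the exponent $\gamma$ one produces, after paying the $(1+\kappa)$ loss coming from the transference step in Proposition~\ref{thm:tsf:omegaAdense}, still fits under $24t+\kappa$ for arbitrary $\kappa>0$. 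Once this appendix is in place, the three elementary steps above assemble the present proposition.
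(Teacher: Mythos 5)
Your proposal is correct and follows essentially the same route as the paper: bound $T(\lambda'_A,\dots,\lambda'_A) \geq (\alpha/2)^t T(1_{A'},\dots,1_{A'})$, use Proposition~\ref{thm:tsf:omegaAdense} to get density $\gg_\kappa \alpha^{1+\kappa}$ for $A'$, pass to an integer count via Lemma~\ref{thm:tsf:ToperatoroverZ}, and apply Proposition~\ref{thm:intg:intgcplx1}. Your description of the appendix result is slightly imprecise (it gives $\#\{\by \in A'^t : V\by=0\} \geq \exp[-C\alpha'^{-24t}]N^{t-r}$ with exponent exactly $24t$, and the normalization $M^{-(t-r)}N^{t-r} \asymp 1$ is harmless), but none of this affects the argument.
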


\begin{proof}
Consider the level set $A' = \{ \lambda'_A \geq \alpha/2 \}$
contained in the support of $\lambda'_A$, 
and therefore in $[-2N,2N]$.
Since $\lambda'_A \geq (\alpha/2) \cdot 1_{A'}$, we have
\begin{align*}
	T( \lambda'_A, \dots, \lambda'_A ) \geq (\alpha/2)^t T( 1_{A'}, \dots , 1_{A'} ).
\end{align*}
By Proposition~\ref{thm:tsf:omegaAdense}, we know that
$A'$ has density $\gg_\kappa \alpha^{1+\kappa}$
in $[-2N,2N]$ for any $\kappa > 0$.
Invoking Lemma~\ref{thm:tsf:ToperatoroverZ},
and applying Proposition~\ref{thm:intg:intgcplx1} 
to $A' \subset [-2N,2N]$, we obtain
\begin{align*}
	T( 1_{A'} , \dots , 1_{A'} )
	= M^{-(t-r)} \#\{ y \in (A')^t : Vy = 0 \}
	\geq \exp\big[ - C_\kappa \alpha^{-(1+\kappa)24t} \,\big].
\end{align*}
\end{proof}

On the other hand, the averages from~\eqref{eq:tsf:multilinexpansion} 
involving a difference $\lambda_A - \lambda'_A$ are bounded via
the generalized Von Neumann theorem of Section~\ref{sec:rd}.

\begin{proposition}[Error terms]
\label{thm:tsf:errortermupperbound}
	Suppose that $f_1,\dots,f_t$
	are functions all equal to $\lambda'_A$ or $\lambda_A - \lambda'_A$,
	with at least one of them equal to $\lambda_A - \lambda'_A$.
	Then
	\begin{align*}
		| T( f_1, \dots, f_t ) | \ll \eps^{1/4} + \delta^{1/4} + (\log N)^{-\frac{1}{4}+o(1)}.
	\end{align*}
\end{proposition}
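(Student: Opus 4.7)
The plan is to apply the quantitative generalized Von Neumann theorem (Theorem~\ref{thm:rd:genvonnm}) at an index $i \in [t]$ for which $f_i = \lambda_A - \lambda'_A$, which exists by hypothesis. Since the system $\theta$ was arranged (via Propositions~\ref{thm:linalg:kernelparam} and~\ref{thm:linalg:exact1normalform}) to be in exact $1$-normal form at every $i \in [t]$, the theorem applies at that position with $s=1$. After fourth-rooting it will deliver an estimate of the shape
\[
	|T(f_1,\dots,f_t)| \ll \|\lambda_A - \lambda'_A\|_{U^2(\Z_M)} + H^{-1/4},
\]
provided we can exhibit a $D$-pseudorandom weight of level $H$ dominating all the $|f_j|$ up to an absolute constant.

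To produce such a common majorant, I would start from the weight $\wt{\nu}$ of Proposition~\ref{thm:rd:pseudordmajorant}, which is $D$-pseudorandom of level $(\log N)^{1-o(1)}$ and satisfies $\lambda_A \ll \wt{\lambda}_{b,W} \ll \wt{\nu}$. Because $\lambda'_A = \lambda_A \ast \mu_B$ is the convolution of $\lambda_A$ with the symmetric probability measure $\mu_B$ on $\Z_M$, we obtain pointwise $\lambda'_A \ll \wt{\nu} \ast \mu_B$, and hence both $\lambda_A$ and $\lambda'_A$ are dominated by a constant multiple of
\[
	\nu' = \tfrac{1}{2}\bigl( \wt{\nu} + \wt{\nu} \ast \mu_B \bigr).
\]
By Proposition~\ref{thm:rd:pseudordofavg}, the weight $\nu'$ is again $D$-pseudorandom of level $(\log N)^{1-o(1)}$. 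Since $|\lambda_A - \lambda'_A| \leq \lambda_A + \lambda'_A \ll \nu'$, every $|f_j|$ is dominated by a constant multiple of $\nu'$, which is exactly the hypothesis needed for Theorem~\ref{thm:rd:genvonnm} after rescaling by that constant.

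Applying the theorem with $s = 1$ and $H = (\log N)^{1-o(1)}$ then yields
\[
	|T(f_1,\dots,f_t)|
	\ll \|\lambda_A - \lambda'_A\|_{U^2(\Z_M)} + (\log N)^{-1/4+o(1)},
\]
and combining this with the Fourier-analytic bound $\|\lambda_A - \lambda'_A\|_{U^2} \ll \eps^{1/4} + \delta^{1/4}$ from Proposition~\ref{thm:tsf:U2bound} concludes the proof. The one point I would be most careful about is bookkeeping of the parameter $D$: the value required by Theorem~\ref{thm:rd:genvonnm} depends on $d, t, \|\dot{\theta}\|$, and one must check that this same $D$ can be fed into Propositions~\ref{thm:rd:pseudordmajorant} and~\ref{thm:rd:pseudordofavg} when constructing $\nu'$. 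Since $d, t, \|\dot{\theta}\| = \|\dot{\psi}\|$ are all fixed once $V$ is fixed, and all the error terms in Section~\ref{sec:rd} depend only on $D$, this is merely a matter of choosing $D$ large enough at the outset.
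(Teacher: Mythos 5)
Your proposal is correct and follows essentially the same route as the paper: pick an index $i$ with $f_i = \lambda_A - \lambda'_A$, majorize all $|f_j|$ by the averaged weight $\nu' = \tfrac{1}{2}(\wt{\nu} + \wt{\nu} \ast \mu_B)$ using Propositions~\ref{thm:rd:pseudordmajorant} and~\ref{thm:rd:pseudordofavg}, apply Theorem~\ref{thm:rd:genvonnm} with $s=1$ at the exact-$1$-normal-form position $i$, and finish with the $U^2$ bound of Proposition~\ref{thm:tsf:U2bound}. Your remark on bookkeeping the constant $D$ matches the paper's treatment, which fixes $D = D_{d,t,Q}$ from the Von Neumann theorem at the outset.
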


\begin{proof}
We consider $i \in [t]$ such that $f_i = \lambda_A - \lambda'_A$.
Let $Q = \| \dot{\theta} \|$ and 
let $D = D_{d,t,Q}$ be the constant from
Proposition~\ref{thm:rd:genvonnm}.
By Proposition~\ref{thm:rd:pseudordmajorant},
and since we assumed $N$ to be large enough with respect
to $d,t,\theta$, there exists a $D$-pseudorandom weight 
$\nu : \Z_M \rightarrow \R^+$ of level $(\log N)^{1-o(1)}$
such that 
\begin{align*}
	0 \leq \lambda_A \ll \lambda_{b,W} \ll \nu.
\end{align*}
Let $\nu' = \tfrac{1}{2}(\nu + \nu \ast \mu_B )$,
so that $|\lambda'_A| \ll \nu'$ and 
$|\lambda_A - \lambda'_A| \ll \nu'$.
By Proposition~\ref{thm:rd:pseudordofavg},
$\nu'$ is also $D$-pseudorandom of level $(\log N)^{1-o(1)}$.

Recall now that $\psi$ is in exact $1$-normal form at $i$.
Applying Proposition~\ref{thm:rd:genvonnm} with $s = 1$
to the functions $f_1,\dots,f_t$ 
(divided by a certain large constant),
and inserting the estimates of Proposition~\ref{thm:tsf:U2bound},
we obtain the desired bound.
\end{proof}

At this point we need only collect together
the bounds on the main term and the error terms
in~\eqref{eq:tsf:multilinexpansion}
to finish the proof of Theorem~\ref{thm:intro:primescplx1},
which we have previously reduced to
proving Theorem~\ref{thm:tsf:wtrickedcplx1}.
\medskip

\textit{Proof of Theorem~\ref{thm:tsf:wtrickedcplx1}.}
Starting from the multilinear 
expansion~\eqref{eq:tsf:multilinexpansion},
and inserting the bounds from
Propositions~\ref{thm:tsf:maintermlowerbound}
and~\ref{thm:tsf:errortermupperbound},
we obtain
\begin{align*}
	T( \lambda_A , \dots , \lambda_A ) 
	\geq \exp[ -C_\kappa  \alpha^{-24t - \kappa} ]
	- O\Big( \eps^{1/4} + \delta^{1/4} + (\log N)^{-\tfrac{1}{4} + o(1)} \Big),
\end{align*}
whenever, say, $\eps^{-1}, \delta^{-1} \leq c(\log N)^{1/8}$.
Choose now $\eps = \delta = \exp[ -C'_\kappa  \alpha^{-24t - \kappa} ]$
(for a large $C'_\kappa$),
and assume that $\alpha \geq C_\kappa(\log\log N)^{-1/(24t+\kappa)}$.
This ensures that the conditions on $\eps$ and $\delta$ are satisfied,
and that we have a lower bound
\begin{align*}
	T( \lambda_A , \dots , \lambda_A ) 
	\geq \exp[ -C'_\kappa  \alpha^{-24t - \kappa} ].
\end{align*}
By Lemma~\ref{thm:tsf:ToperatoroverZ} 
and since $\lambda_A \leq (\log N) 1_A$, we then have
\begin{align*}
	\#\{ y \in A^t : Vy = 0 \} 
	\geq \exp\big[- C_\kappa\alpha^{-24t - \kappa} \big] \cdot N^{t-r} (\log N)^{-t}.
\end{align*}
On the other hand, by Lemma~\ref{thm:linalg:trivsols},
the number of $y \in [N]^t$ with two identical coordinates 
and such that $Vy=0$ is $\ll N^{t-r-1}$.
Choosing now $\kappa = t$ for aesthetic reasons,
and given the range of density under consideration,
we are therefore ensured to find at least one non-trivial solution.
\qed

As claimed before, our argument 
allows for a slightly more general statement than 
Theorem~\ref{thm:intro:primescplx1}.
Indeed, the following can be obtained by 
a suitable Varnavides argument and by inserting
the resulting analog of 
Proposition~\ref{thm:intg:intgcplx1} in our proof.

\begin{theorem}
\label{thm:tsf:conversion}
	Suppose that $V \in \M_{r \times t}(\Z)$ is a translation-invariant matrix
	of rank $r$ and complexity one, and let $\gamma > 0$ be a parameter.
	Assume that $V \by = 0$ 
	has a distinct-coordinates solution $\by \in A^t$
	for every subset $A$ of $[N]$ of density at least
	\begin{align*}
		C(\log N)^{-\gamma}.
	\end{align*}
	Then such a solution also exists for
	every subset $A$ of $\mathcal{P}_N$ of density at least
	\begin{align*}
		C_\eps (\log\log N)^{-\gamma+\eps},
	\end{align*}
	for any $\eps > 0$.
\end{theorem}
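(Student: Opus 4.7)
The plan is to run the proof of Theorem~\ref{thm:intro:primescplx1} essentially verbatim, replacing only the input integer result (Proposition~\ref{thm:intg:intgcplx1}) by a Varnavides-type counting lemma derived from the hypothesis of the present theorem. All other components---the $W$-trick reduction to a statement analogous to Theorem~\ref{thm:tsf:wtrickedcplx1}, the multilinear expansion~\eqref{eq:tsf:multilinexpansion}, the reduction to a pattern count on the level set $A' = \{\lambda'_A \geq \alpha/2\}$ via Proposition~\ref{thm:tsf:maintermlowerbound}, and the control of the error terms via Proposition~\ref{thm:tsf:errortermupperbound}---remain unchanged.

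The first step is to establish a Varnavides-type counting lemma: assuming the density hypothesis with exponent $\gamma$, every subset $A \subset [-2N,2N]_\Z$ of density $\geq \beta$ contains, for each $\eps > 0$ and provided $\beta \geq C_\eps(\log N)^{-\gamma + \eps}$, at least $\exp\big[ -C_\eps \beta^{-1/\gamma - \eps} \big] \cdot N^{t-r}$ distinct-coordinates solutions of $V\by = 0$. The standard proof fixes a scale $M = \exp\big( (2C_\eps/\beta)^{1/(\gamma-\eps)} \big)$ so that $C_\eps(\log M)^{-\gamma} \leq \beta/2$, and considers, for a parametrization $\psi : \Z^d \twoheadrightarrow \Z^t \cap \Ker(V)$ as in Proposition~\ref{thm:linalg:kernelparam}, the family of affine copies $\{x_0 \mathbf{1} + m \cdot \psi([M]^d)\}$ nested inside $[-2N,2N]^t$. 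By averaging over $(x_0,m)$, a positive proportion of these copies meet $A^t$ in a set of relative density $\geq \beta/2$, and the hypothesis applied inside each such copy yields a distinct-coordinates solution; translation-invariance and homogeneity lift each one back to a solution in $A^t$. Correcting for the (polynomial-in-$M$) multiplicity with which each solution in $[-2N,2N]$ is recovered, and absorbing this factor into the $\eps$-slack via $\log M \asymp \beta^{-1/\gamma}$, produces the claimed count.

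Substituting this count into Proposition~\ref{thm:tsf:maintermlowerbound} (applied to the level set $A' = \{\lambda'_A \geq \alpha/2\}$, of density $\gg_\kappa \alpha^{1+\kappa}$ in $[-2N,2N]$ by Proposition~\ref{thm:tsf:omegaAdense}) yields the modified main-term bound
\begin{align*}
	T(\lambda'_A, \dots, \lambda'_A) \geq \exp\big[ -C_\eps \alpha^{-1/\gamma - \eps} \big],
\end{align*}
while Proposition~\ref{thm:tsf:errortermupperbound} continues to bound the remaining terms in~\eqref{eq:tsf:multilinexpansion} by $O(\eps^{1/4} + \delta^{1/4} + (\log N)^{-1/4 + o(1)})$. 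Choosing $\eps = \delta = \exp\big[ -C'_\eps \alpha^{-1/\gamma - \eps} \big]$, exactly as in the original proof, the main term dominates the pseudorandomness error precisely when $\alpha^{-1/\gamma - \eps} \leq c\log\log N$; after a relabeling of $\eps$ this becomes $\alpha \geq C_\eps(\log\log N)^{-\gamma + \eps}$, as claimed. Degenerate solutions are disposed of by Lemma~\ref{thm:linalg:trivsols}.

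The main obstacle is the bookkeeping of $\eps$-losses: the Varnavides overcounting contributes a polynomial-in-$M$ factor with $M = \exp(\beta^{-1/\gamma})$, i.e.~a factor of the form $\exp[O(\beta^{-1/\gamma} \log \beta^{-1})]$, and this must be absorbed into the exponent without corrupting the leading term $\beta^{-1/\gamma}$. This is possible because $\log\beta^{-1}$ grows much slower than $\beta^{-1/\gamma}$ as $\beta \to 0$, yielding the sub-leading correction $\beta^{-1/\gamma - \eps}$ for any $\eps > 0$. Likewise the $\alpha^{1+\kappa}$ density loss from Proposition~\ref{thm:tsf:omegaAdense} is harmless: taking $\kappa$ small relative to $\eps$ keeps the composite exponent within $\alpha^{-1/\gamma - 2\eps}$, of the same functional shape, so that a final relabeling $\eps \leftarrow \eps/(3\gamma^2)$ produces exactly the exponent stated in the theorem.
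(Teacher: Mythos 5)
The plan---replace the appendix's Proposition~\ref{thm:intg:intgcplx1} by a Varnavides-type counting lemma derived from the hypothesis, and rerun the transference argument---is the route the paper indicates. However, the specific Varnavides argument you sketch does not yield the count that the transference step needs, and this is a genuine gap.

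Your argument finds a distinct-coordinates solution $\bz$ in the pullback $A_{x_0,m} = \{ n \in [M] : x_0 + mn \in A \}$ for a positive proportion of pairs $(x_0,m)$, and pushes each forward to $\by = x_0\mathbf{1} + m\bz \in A^t$. The number of admissible pairs $(x_0,m)$ with $m \in [1, N/M]$ is $\asymp N^2/M$, the good pairs number $\gg \beta N^2/M$, and the multiplicity with which any fixed $\by$ is recovered is $\ll M^2$ (restricting $m$ to a dyadic range, say). This gives
\begin{align*}
	\#\{ \by \in A^t : V\by = 0,\ \by \ \text{distinct} \} \gg \beta N^2 / M^3 ,
\end{align*}
which is of the required order $\exp[-C\beta^{-1/\gamma}] N^{t-r}$ only when $t - r = 2$ (the $k$-AP case). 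For a general complexity-one system one can have $t - r \geq 3$: for instance the system of ``$d$ points and their midpoints'' from Section~\ref{sec:intro} has $t - r = d$, and any single translation-invariant equation in $t \geq 4$ variables has $t - r = t - 1 \geq 3$. In those cases the count you obtain falls short of $N^{t-r}$ by a factor $\asymp N^{t-r-2}$; structurally, the solutions $\by$ reachable as $x_0\mathbf{1} + m\bz$ with $\bz \in [M]^t$ form a set of size only $\asymp N^2 M^{t-r-2} \ll N^{t-r}$, so no amount of bookkeeping can recover the full order. Feeding the resulting bound into Proposition~\ref{thm:tsf:maintermlowerbound} gives $T(\lambda'_A,\dots,\lambda'_A) \gg \exp[-C\beta^{-1/\gamma-\eps}] \cdot N^{2-(t-r)}$, and the extra factor $N^{2-(t-r)} \leq N^{-1}$ is swamped by the $(\log N)^{-1/4+o(1)}$ error from Proposition~\ref{thm:tsf:errortermupperbound}. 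So the main term no longer dominates, and Theorem~\ref{thm:tsf:wtrickedcplx1} (in its modified form) cannot be concluded.

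What is needed is a genuine supersaturation estimate of the form $\#\{ \by \in A^t : V\by = 0 \} \geq \exp[-C_\eps\beta^{-1/\gamma-\eps}] N^{t-r}$, with the correct power of $N$, and a one-dimensional dilation Varnavides does not produce it for $t - r \geq 3$. The paper obtains its own count in Proposition~\ref{thm:intg:intgcplx1} directly from the density-increment iteration (see~\eqref{eq:intg:patterncount}), not from a Varnavides step applied to a pre-existing density statement, which sidesteps this issue. To prove Theorem~\ref{thm:tsf:conversion} from the stated hypothesis you would need either a multi-scale or higher-dimensional supersaturation argument in the spirit of Frankl--Graham--R\"odl with explicit control of the constant, or a way to bootstrap the assumed density statement into a density-increment statement that supplies the pattern count directly; neither is supplied by the one-parameter family of affine copies $\{x_0\mathbf{1} + m\cdot\psi([M]^d)\}$ you use. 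The rest of your argument (the choices of $\eps$ and $\delta$, the relabeling of $\eps$ in the exponent, the application of Lemma~\ref{thm:linalg:trivsols} to degenerate solutions) is fine once a correct counting input is substituted.
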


This being said, we have not tried to optimize 
the exponent $1/24t$ in Corollary~\ref{thm:intg:intgcplx1orig},
nor the exponent in Theorem~\ref{thm:intro:primescplx1}
that follows from it.
This is because the former exponent is likely not
optimal, and far from comparable in quality with
Sanders'~\cite{S:roth} bounds for Roth's theorem, 
because of the repeated applications
of Cauchy-Schwarz in Appendix~\ref{sec:apx:intg}.

\appendix

\section{Translation-invariant equations in the integers}
\label{sec:apx:intg}

The purpose of this section is to derive an
extension of a result of Shao~\cite{Shao:configs}
to arbitrary systems of complexity one,
and with a count of the multiplicity of pattern occurences.
The structure of our proof is similar 
to Shao's, and it relies in particular in the key 
local inverse $U^2$ theorem proved there
(Proposition~\ref{thm:intg:U2inv} below).
However, certain added technicalities arise
when handling arbitrary systems:
the most significant of those is addressed 
by Proposition~\ref{thm:intg:untwistingU2} below.

\begin{proposition}
\label{thm:intg:intgcplx1}
	Let $V \in \M_{r \times t}(\Z)$ 
	be a translation-invariant matrix
	of rank $r$ and complexity one.
	Suppose that $A$ is a subset of $[-N,N]_\Z$
	of density $\alpha$.
	Then
	\begin{align*}
		\# \{ \by \in A^t : V \by = 0 \} 
		 \geq \exp\big[ \! - C \alpha^{-24t} \big] \cdot N^{t-r},
	\end{align*}
	for a constant $C > 0$ depending at most on $r,t,V$.
\end{proposition}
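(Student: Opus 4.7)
The plan is to follow the density-increment strategy of Bourgain--Shao on Bohr sets, generalised from the model system $\psi(x)=(x_0+x_i+x_j)$ studied in \cite{Shao:configs} to an arbitrary translation-invariant matrix $V$ of complexity one. The very first step is to exploit Proposition~\ref{thm:linalg:kernelparam} together with Proposition~\ref{thm:linalg:exact1normalform} to rewrite the solution count in a form amenable to harmonic analysis: I would parametrise $\Z^t\cap\Ker(V)$ by an affine system $\varphi(x_0,x)=x_0\mathbf{1}+\psi(x)$ with $\psi:\Z^d\twoheadrightarrow\Z^t\cap\Ker(V)\cap\{\mathbf{1}\}^\perp$ in exact $1$-normal form. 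The count of solutions in $A^t$ is then, up to bounded multiplicity, a constant multiple of
\begin{align*}
	N^{t-r-1}\sum_{x_0,x}1_A(x_0+\psi_1(x))\cdots 1_A(x_0+\psi_t(x)),
\end{align*}
the extra shift variable $x_0$ being there to facilitate an eventual Varnavides-style averaging.

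Next I would run a density-increment iteration on Bohr sets $B=B(\Gamma,\rho)$. At each stage one has a translate $A'$ of $A$ of density $\alpha'\geq\alpha$ inside a regular Bohr set $B$, and one considers the normalised count $T_B$ of $\varphi$-configurations whose shift $x_0$ lives in $B$ (and the free variable $x$ in a suitably dilated Bohr set of comparable size). The dichotomy is: either $T_B\gg \exp[-C\alpha^{-24t}]$, which by a final Varnavides argument will deliver the desired bound on the global count, or else $T_B$ is substantially smaller than the ``expected'' main term $(\alpha')^t$. In the latter case, a local, Bohr-set variant of the generalised Von Neumann theorem (the analogue for $\Z$ of Proposition~\ref{thm:rd:genvonnm}, cited as Proposition~\ref{thm:intg:largeTtolargeU2} in the appendix) implies that $\|1_{A'}-\alpha'1_B\|_{U^2(B')}$ is large for some local $U^2$-norm, exactly because of the exact $1$-normal form of $\psi$ at each coordinate.

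From this $U^2$-largeness I would invoke Shao's local inverse $U^2$ theorem (Proposition~\ref{thm:intg:U2inv}) to extract a frequency $\xi$ and a narrower Bohr set $B''\subset B$ on which the Fourier coefficient $\widehat{(1_{A'}-\alpha')}(\xi)$ is large when computed relatively to $B''$. The main technical subtlety is that the $U^2$-norm that appears naturally from applying Cauchy--Schwarz to the pattern-count is a ``twisted'' $U^2$, shaped by the linear forms $\psi_i$ and the normal-form coordinate $c_k=\dot{\psi}_i(e_k)$. In Shao's special case the twist is trivial; for general $\psi$ one must, before applying the inverse theorem, undo an affine change of variables on the Bohr frequencies, and this is what the untwisting result (Proposition~\ref{thm:intg:untwistingU2}) is designed to accomplish. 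The hard part of the plan is really this untwisting: one must verify that the narrower Bohr set produced by inverse $U^2$ can be pulled back, through the linear map induced by $\dot{\psi}$ and the auxiliary change of coordinates, to a Bohr set on which $A'$ itself enjoys a genuine density increment, and that the rank of the frequency set and the loss in $\rho$ remain under control uniformly in $V$.

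Once the untwisted increment is in hand, I would iterate in the standard way: $A'$ gains a multiplicative $(1+c\alpha)$ boost on a regular sub-Bohr set of rank at most $\mathrm{rank}(B)+O(1)$ and width $\rho'\gg \rho\cdot\alpha^{O(1)}$. The iteration must terminate in at most $O(\alpha^{-1})$ steps on pain of densities exceeding one, and tracking the width at each step produces a final Bohr set of volume $\gtrsim\exp[-C\alpha^{-24t}]N$ on which the main-term regime holds. A Varnavides argument then propagates the count from this final Bohr set back to a lower bound of the claimed shape $\exp[-C\alpha^{-24t}]\cdot N^{t-r}$ for the global number of solutions, after subtracting the $O_V(N^{t-r-1})$ degenerate solutions bounded by Lemma~\ref{thm:linalg:trivsols}. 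The exponent $1/24t$ ultimately arises from balancing the $\alpha$-cost of inverse $U^2$, the repeated applications of Cauchy--Schwarz in the Von Neumann step (which cost a power depending on $t$), and the regularity losses of the Bohr sets.
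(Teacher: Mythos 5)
Your proposal follows essentially the same route as the paper's appendix: parametrize $\Z^t\cap\Ker(V)$ via $\varphi(x_0,x)=x_0\mathbf{1}+\psi(x)$ with $\psi$ in exact $1$-normal form, run a density increment on Bohr sets in $\Z_M$, pass from a small $T_\bB$-average to a large twisted local $U^2$-norm via a box Van der Corput argument (Proposition~\ref{thm:intg:largeTtolargeU2}), untwist via an averaging and Gowers--Cauchy--Schwarz step (Proposition~\ref{thm:intg:untwistingU2}), apply Shao's local inverse $U^2$ theorem (Proposition~\ref{thm:intg:U2inv}), and iterate.

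There is, however, a genuine error in your quantitative bookkeeping. From the multilinear expansion the balanced average is of size $\eta\asymp\alpha^t$, not $\alpha$; Shao's inverse theorem then yields a density gain of order $\eta^{12}=\alpha^{12t}$, so the multiplicative increment is $(1+c\alpha^{12t-1})$, not $(1+c\alpha)$, and the iteration runs for $O(\alpha^{-12t+1})$ steps, not $O(\alpha^{-1})$. Tracking dimension $d_i\ll\alpha^{-12t+1}$ and radius $\log\delta_i^{-1}\ll\alpha^{-12t+1}\log\alpha^{-1}$ through this longer iteration is exactly what produces a final Bohr set volume $\geq\exp[-C\alpha^{-24t}]M$. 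Under your stated $(1+c\alpha)$ increment the same calculation would give the far stronger bound $\exp[-C\alpha^{-2+o(1)}]$, which the argument does not deliver; as written, your stated increment is inconsistent with your stated final exponent. A secondary and smaller point: no Varnavides argument is needed. The shift variable $x_0$ in $\varphi$ guarantees $S_\varphi(A)\geq S_\varphi(A_i)$ at every stage of the iteration (translates of $A_i$ live inside $A$), so the count on the final Bohr set transfers directly back to $A$; the conversion from $\varphi$-pattern count to $\#\{\by\in A^t:V\by=0\}$ is then a fibre-counting bound $\#\{x:\varphi(x)=y\}\ll N^{(q+1)-(t-r)}$, not an averaging argument.
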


Although we only need the result above
for the transference argument
of Section~\ref{sec:tsf}, 
we record the following consequence,
since it may be of independent interest.

\begin{corollary}
\label{thm:intg:intgcplx1orig}
	Let $V \in \M_{r \times t}(\Z)$ 
	be a translation-invariant matrix
	of rank $r$ and complexity one.
	There exists a constant $C > 0$ depending
	at most on $r,t,V$ such that,
	if $A$ is a subset of $[N]$ of density at least
	$C(\log N)^{-1/24t}$,
	there exists a solution $\by \in A^t$
	to $V \by = 0$ with distinct coordinates.
\end{corollary}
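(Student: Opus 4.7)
The plan is to derive Corollary \ref{thm:intg:intgcplx1orig} as a direct popularity-argument consequence of Proposition \ref{thm:intg:intgcplx1}, which is the substantive statement in this appendix. The statistical bound there produces a positive proportion of solutions, and one need only check that, at the stated density threshold, this main term overwhelms the trivial contribution of solutions with coinciding coordinates.

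First, I would pass from $A \subset [N]$ to the viewpoint of Proposition \ref{thm:intg:intgcplx1}: since $[N] \subset [-N,N]_\Z$, the set $A$ has density at least $\alpha/2$ inside $[-N,N]_\Z$, where $\alpha = |A|/N \geq C(\log N)^{-1/24t}$. Applying Proposition \ref{thm:intg:intgcplx1} (with constants absorbed into $V$-dependent quantities) gives
$$\#\{\by \in A^t : V\by = 0\} \geq \exp\big[-C_1 \alpha^{-24t}\big] \cdot N^{t-r},$$
for some $C_1 = C_1(r,t,V)$.

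Second, I would bound the contribution of degenerate solutions. By Lemma \ref{thm:linalg:trivsols}, for each pair $i \neq j$ in $[t]$ the number of $\by \in [-N,N]_\Z^t$ with $V\by = 0$ and $y_i = y_j$ is $O_V(N^{t-r-1})$, so summing over the $\binom{t}{2}$ pairs gives a total of $O_{t,V}(N^{t-r-1})$ degenerate solutions in $A^t$.

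Finally, I would fix the constant $C$ in the density hypothesis so that the main term dominates. Concretely, the condition $\alpha \geq C(\log N)^{-1/24t}$ translates into $C_1 \alpha^{-24t} \leq C_1 C^{-24t} \log N$, and by choosing $C$ large enough in terms of $C_1$ and the implicit constant in the degenerate bound, one secures
$$\exp\big[-C_1 \alpha^{-24t}\big] \cdot N^{t-r} \;>\; C_2(t,V)\, N^{t-r-1}$$
for $N$ sufficiently large. Subtracting the two estimates leaves a strictly positive count of non-degenerate solutions, which gives the desired $\by \in A^t$ with distinct coordinates and $V\by = 0$. There is no real obstacle at this stage: the entire difficulty is housed in Proposition \ref{thm:intg:intgcplx1}, whose proof (following Shao's strategy via the local inverse $U^2$ theorem and Proposition \ref{thm:intg:untwistingU2}) is where the $1/24t$ exponent is actually produced; the corollary is a routine balancing of the main term against the trivial term.
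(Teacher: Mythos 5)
Your proposal is correct and follows exactly the same route as the paper's proof: apply Proposition~\ref{thm:intg:intgcplx1} to get the main term, bound degenerate solutions via Lemma~\ref{thm:linalg:trivsols}, and balance at the stated density threshold. You spell out the details (the factor-of-two density adjustment when passing to $[-N,N]_\Z$, the sum over pairs $i\neq j$) that the paper leaves implicit, but there is no substantive difference.
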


\begin{proof}
By Lemma~\ref{thm:linalg:trivsols},
the number of $\by \in [N]^t$ with two equal coordinates
such that $V \by = 0$ is at most $O(N^{t-r-1})$.
The result then follows from Proposition~\ref{thm:intg:intgcplx1},
since we assumed that $\alpha \geq C(\log N)^{-1/24t}$.
\end{proof}

We now fix a translation-invariant matrix 
$V \in \M_{r \times t}(\Z)$ of rank $r$,
and for the purpose of proving Proposition~\ref{thm:intg:intgcplx1},
we may assume without loss of generality
that $t \geq 3$ and $V$ has no zero columns.
By Propositions~\ref{thm:linalg:kernelparam}
and~\ref{thm:linalg:exact1normalform}, 
we may choose a linear parametrization
$\varphi : \Z^{q+1} \twoheadrightarrow \Z^t \cap \Ker_{\Q}(V)$
of the form $\varphi(x_0,x) = x_0 \mathbf{1} + \psi(x)$,
where $\psi : \Z^q \rightarrow \Z^t$ is in 
exact $1$-normal form at every $i \in [t]$.
We have traded the letter $d$ for $q$ here
because the former is too precious 
as the dimension of a Bohr set.
Writing  $\psi_i(x) = a_{i1} x_1 + \dots + a_{iq} x_q$,
we define the sets of non-zero coefficients 
${ \Xi_i = \{ a_{ij} \neq 0,\, j \in [q] \} }$
and $\Xi = \cup_{i \in [t]} \Xi_i$,
so that we have $|a| \leq \| \varphi \|$ for
every $a \in \Xi$.

We also consider a fixed integer $N$ from the statement 
of Proposition~\ref{thm:intg:intgcplx1}, 
which should be thought of as quite large.
As usual, we choose to carry out our Fourier analysis 
over a cyclic group $\Z_M$ on a slightly larger scale;
to be precise, via Bertrand's postulate we pick a prime $M$ such that 
$\|\varphi\| \cdot 2N < M \leq \| \varphi \| \cdot 4N$.
Finally, throughout this section the letters $c$ and $C$ denote 
positive constants which are chosen, respectively, 
small or large enough with respect to $q$, $t$ and $\varphi$.
While we do not attempt to track the dependency of 
our parameters on $\| \varphi \|$, we sometimes use 
this quantity to illustrate our argument.

We now recall the basics of Bohr sets and regularity calculus,
which can be found in many places~\cite{me:bourgainroth,AG:acbook,GT:U3inv}.
We speed up this process as this material is utterly standard
and our notation is consistent with the litterature.

\begin{definition}
\label{thm:intg:bohrsetdef}
	A Bohr set of frequency set $\Gamma \subset \Z_M$
	and radius $\delta > 0$ is 
	\begin{align*}
		B(\Gamma,\delta) = \{ x \in \Z_M : \| \tfrac{xr}{M} \| \leq \delta \quad \forall r \in \Gamma \},
	\end{align*}
	and its dimension $d$ is defined by $d=|\Gamma|$.
	We often let the parameters $\Gamma,\delta,d$ be implicitely defined
	whenever we introduce a Bohr set $B$.
	The $\rho$-dilate $B_{| \rho}$ of a Bohr set $B$ is defined
	by $B(\Gamma,\delta)_{| \rho} = B(\Gamma,\rho \delta)$,
	and given two Bohr sets $B,B'$ we write $B' \leq_{\rho} B$
	when $B' \subset B_{| \rho}$.
	Finally, we say that $B$ is \textit{regular} when, for every $0 < \rho \leq 2^{-6} / d$,
	\begin{align*}
		( 1 - 2^6 \rho d ) |B| \leq
		|B_{| 1 \pm \rho}| \leq ( 1 + 2^6 \rho d ) |B|.
	\end{align*}
\end{definition}

We also recall standard size estimates on Bohr sets,
as well as Bourgain's regularization lemma.
In our later argument, all Bohr sets will be picked regular.

\begin{fact}
	Suppose that $B$ is a Bohr set of dimension $d$ and radius $\delta$,
	and $\rho \in (0,1]$. Then
	\begin{align*}
		|B| \geq \delta^d M 
		\quad\text{and}\quad
		|B_{| \rho}| \geq (\rho/2)^{2d} |B| .
	\end{align*}
	Given any Bohr set $B$, there exists $c \in [\tfrac{1}{2},1]$ 
	such that $B_{|c}$ is regular.
\end{fact}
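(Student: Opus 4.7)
The plan is to prove the three assertions in succession; each reduces to a pigeon-hole argument on the image of the embedding $\Phi : \Z_M \to \T^d$, $\Phi(x) = (xr/M)_{r \in \Gamma}$, followed by a Vitali-type covering in the regularity step.

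For the first estimate $|B| \geq \delta^d M$, I would partition $\T^d$ into a grid of axis-aligned sub-cubes of side length $\delta$; there are at most $\lceil 1/\delta \rceil^d$ of them, and since these cubes have total volume $1$, pigeon-holing the $M$ points $\Phi(0), \ldots, \Phi(M-1)$ produces a sub-cube $Q$ whose fibre $F = \Phi^{-1}(Q)$ has size at least $M/\lceil 1/\delta \rceil^d \geq \delta^d M$. Any difference $x - y$ with $x, y \in F$ satisfies $\|(x-y)r/M\|_\T \leq \delta$ for every $r \in \Gamma$, so $F - F \subset B(\Gamma,\delta)$, which gives the stated bound upon observing $|B| \geq |F - F| \geq |F|$.

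For the dilate inequality $|B_{|\rho}| \geq (\rho/2)^{2d}|B|$, the argument is a refined version of the above: the restriction $\Phi|_B$ lands in $[-\delta,\delta]^d \bmod 1$, which I would cover by sub-cubes of side length $\rho\delta$; there are at most $\lceil 2/\rho \rceil^d \leq (4/\rho)^d$ of these, so pigeon-hole on $B$ yields a fibre of size at least $|B|(\rho/4)^d$ whose difference set is contained in $B_{|\rho}$. The exponent $2d$ in the statement is weaker than the $d$ one obtains here (since $\rho \leq 1$), and it leaves a comfortable margin for the regularization step.

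The main obstacle, and the only non-trivial part, is the existence of a regular dilate. I would define $g(\rho) = \log|B_{|\rho}|$ on $[1/2,1]$: it is monotone non-decreasing and, by the previous step applied to the Bohr set $B_{|1/2}$ with $\rho = 1$, satisfies $g(1) - g(1/2) \leq 2d\log 4$. Suppose for contradiction that no $c \in [1/2,1]$ satisfies the regularity condition: then for every $c$ there exists $\rho_c \in (0, 2^{-6}/d]$ with $g(c(1+\rho_c)) - g(c(1-\rho_c)) > 2^7 \rho_c d$. Using the Vitali covering lemma on the intervals $\{[c(1-\rho_c), c(1+\rho_c)]\}_{c \in [1/2,1]}$, I would extract a disjoint subcollection covering a definite fraction of $[1/2,1]$, and sum the corresponding increments of $g$ to produce a lower bound on $g(1) - g(1/2)$ exceeding $4d\log 2$, a contradiction. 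The anticipated delicate point is checking that the Vitali selection produces enough disjoint intervals to recover a constant fraction of the total length while respecting the geometric nesting $c(1 \pm \rho_c)$, but a standard $3r$-covering argument suffices.
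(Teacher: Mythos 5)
The paper omits a proof (the Fact is declared ``utterly standard''), and your strategy --- pigeonhole for the size bounds, Vitali covering for a regular dilate --- is precisely the standard one. However, the first step as written has a small but real error: partitioning $\T^d$ into $\lceil 1/\delta\rceil^d$ cubes of side $\delta$ and pigeon-holing $M$ points gives a fibre of size at least $M/\lceil 1/\delta\rceil^d$, and since $\lceil 1/\delta\rceil \geq 1/\delta$ this quantity is \emph{at most} $\delta^d M$, not at least --- the inequality points the wrong way whenever $1/\delta$ is not an integer (e.g.\ $\delta = 0.4$). The standard fix is to average over translates of the box rather than fix a grid: set $F_\theta = \{x \in \Z_M : \Phi(x) \in \theta + [-\delta/2,\delta/2]^d\}$, observe $\int_{\T^d}|F_\theta|\,\mathrm{d}\theta = \delta^d M$, hence some $\theta$ gives $|F_\theta| \geq \delta^d M$, and then $F_\theta - y_0 \subset B$ for any fixed $y_0 \in F_\theta$. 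The same remark applies in principle to the dilate inequality, though there the stated exponent $2d$ leaves enough slack for the ceiling loss to be harmless.

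The Vitali step is correct in spirit and is the standard argument. Your ``geometric nesting'' worry is a non-issue: $[c(1-\rho_c), c(1+\rho_c)]$ is simply the ball of centre $c$ and radius $c\rho_c$, to which the $3r$-covering lemma applies verbatim. Two constant slips are worth noting. First, the failure of regularity at $c$ gives $g(c(1+\rho_c)) - g(c(1-\rho_c)) > \log(1 + 2^6\rho_c d) \geq (\log 2)\cdot 2^6\rho_c d$ (using $\log(1+x) \geq (\log 2)\,x$ on $[0,1]$), not the $2^7\rho_c d$ you wrote; your weaker bound $\log(1+x)\geq x/2$ would in fact \emph{not} close the contradiction. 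Second, the quantity to bound from above is $g(1 + 2^{-6}/d) - g(\tfrac{1}{2}(1 - 2^{-6}/d))$ rather than $g(1)-g(1/2)$, since the intervals overshoot $[\tfrac{1}{2},1]$ slightly (and your invocation ``applied to $B_{|1/2}$ with $\rho=1$'' should read ``applied to $B$ with $\rho=\tfrac12$''). With the corrected constants the Vitali sum of increments is at least $(\log 2)\cdot 2^6 d / 12 \approx 3.7d$ against an upper bound of roughly $2.8d$ from the dilate inequality, so the contradiction does close, but with noticeably less margin than your constants suggested --- a sign that the $2^6$ in Definition~\ref{thm:intg:bohrsetdef} is chosen with some care.
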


In practice, regularity is used
in the following form, 
close in spirit to~\cite[Lemma~4.2]{GT:U3inv}.
When we argue ``by regularity'' in a proof, 
we implicitely invoke these estimates.

\begin{fact}[Regularity calculus]
	Let $f : \Z_M \rightarrow [-1,1]$
	and suppose that $B$ is a regular $d$-dimensional Bohr set,
	$X' \subset B_{| \rho}$ is another set and
	$x' \in B_{| \rho}$, where $\rho \in ( 0 , c/d \,]$.
	Then
	\begin{align*}
		\E_{x \in x' + B} f(x) &= \E_{x \in B} f(x) + O( \rho d ), \\
		\E_{x \in B} f(x) &= \E_{x \in B, x' \in X'} f(x + x') + O( \rho d ), \\
		\E_{x \in B} 1( x \in B_{| 1-\rho} ) f(x) &= \E_{x \in B} f(x) + O( \rho d ) .
	\end{align*}	 
\end{fact}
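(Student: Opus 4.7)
The plan is to establish the three identities in sequence, with the first being the main estimate and the other two following quickly from it.

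First, I would analyze $(x'+B) \triangle B$ to handle $\E_{x \in x' + B} f(x) = \E_{x \in B} f(x) + O(\rho d)$. Since $x' \in B_{|\rho}$, the triangle inequality $\|r(y \pm x')/M\|_{\T} \leq \|ry/M\|_{\T} + \|rx'/M\|_{\T} \leq \|ry/M\|_{\T} + \rho\delta$ for $r \in \Gamma$ shows that any $y \in (x'+B) \triangle B$ lies in $B_{|1+\rho} \setminus B_{|1-\rho}$. Indeed, if $y \in (x'+B) \setminus B$, write $y = x' + x$ with $x \in B$; then $y \in B_{|1+\rho}$, while $y \in B_{|1-\rho}$ would imply $y - x' \in B$, contradicting $y \notin B$. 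The case $y \in B \setminus (x'+B)$ is symmetric. Regularity then gives $|(x'+B) \triangle B| \leq |B_{|1+\rho}| - |B_{|1-\rho}| = O(\rho d)|B|$, and since both averages share the normalization $|B|$ (as $|x'+B| = |B|$), the bound $|f| \leq 1$ yields the desired estimate.

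Second, I would derive $\E_{x \in B} f(x) = \E_{x \in B, \, x' \in X'} f(x + x') + O(\rho d)$ directly from the first identity. For any fixed $x' \in X' \subset B_{|\rho}$, the substitution $x \mapsto x - x'$ shows $\E_{x \in B} f(x+x') = \E_{x \in x'+B} f(x)$, which by the first identity equals $\E_{x \in B} f(x) + O(\rho d)$. Averaging both sides over $x' \in X'$ preserves both the main term $\E_{x \in B} f(x)$ (which is independent of $x'$) and the $O(\rho d)$ error.

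Third, for the identity $\E_{x \in B} 1(x \in B_{|1-\rho}) f(x) = \E_{x \in B} f(x) + O(\rho d)$, I would rewrite the difference as $-\E_{x \in B} 1(x \in B \setminus B_{|1-\rho}) f(x)$, which in absolute value is bounded by $|B \setminus B_{|1-\rho}|/|B|$. Regularity gives $|B_{|1-\rho}| \geq (1 - 2^6 \rho d)|B|$, so this ratio is $O(\rho d)$ under the hypothesis $\rho \leq c/d$.

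No real obstacle is expected: the key input is the definition of regularity together with the triangle inequality for the Bohr pseudonorm $y \mapsto (\|ry/M\|_{\T})_{r \in \Gamma}$. The only point requiring any care is the verification that $(x'+B) \triangle B \subset B_{|1+\rho} \setminus B_{|1-\rho}$ in the first step; once that is in place, the remaining two identities reduce to averaging and to a direct size estimate, respectively.
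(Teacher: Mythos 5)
Your proposal is correct and follows the standard argument (essentially the proof of Lemma~4.2 in the Green--Tao $U^3$ inverse paper that the text points to): the key inclusion $(x'+B)\triangle B\subset B_{|1+\rho}\setminus B_{|1-\rho}$ obtained from the triangle inequality and symmetry of Bohr sets, combined with regularity, gives the first identity, and the other two follow by averaging over $x'\in X'$ and a direct size estimate, exactly as you describe. One small slip in the write-up of the first case: for $y\in(x'+B)\setminus B$ the relevant observation is simply that $B_{|1-\rho}\subset B$, so $y\notin B$ forces $y\notin B_{|1-\rho}$; the step ``$y\in B_{|1-\rho}$ would imply $y-x'\in B$, contradicting $y\notin B$'' is not a contradiction as stated (indeed $y-x'\in B$ is already given in that case), though the intended conclusion is right.
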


Before proceeding further, we recall certain facts about
Gowers box norms~\cite[Appendix B]{GT:lineqs},
which are present in disguise in Shao's argument~\cite{Shao:configs}.
For our argument, we only require
the positivity of such norms,
and two Cauchy-Schwarz-based inequalities.
Strictly speaking, we could do without those norms,
however they are useful 
to write averages over cubes in a more compact 
(if less intuitive) form,
and to expedite repeated applications of Cauchy-Schwarz.
In the following definitions, we let $X_1,X_2$
denote arbitrary subsets of $\Z_M$.

\begin{definition}[Box scalar product and norm]
	The box scalar product of a family of functions
	$(h_{\omega} : X_1 \times X_2 \rightarrow \R)_{\omega \in \{0,1\}^2}$ is
	\begin{align*}
		\langle (h_\omega) \rangle_{\Box(X_1 \times X_2)}
		= \E_{x^{(0)},x^{(1)} \in X_1 \times X_2} 
		\textstyle\prod\limits_{ \omega \in \{0,1\}^2 }
		h_\omega( x_1^{(\omega_1)} , x_2^{(\omega_2)} ).	
	\end{align*}
	The box norm of a function
	$h : X_1 \times X_2 \rightarrow \R$ is defined by
	$\| h \|_{\Box(X_1 \times X_2)}^4 = 
	\langle (h) \rangle_{\Box(X_1 \times X_2)}$.
\end{definition}

The first inequality we require is a box Van der Corput
inequality implicit in~\cite[p.~161]{GW:complexity}, while the
second is the Gowers-Cauchy-Schwarz 
inequality~\cite[Lemma~B.2]{GT:lineqs}.

\begin{fact}
	For $h : X_1 \times X_2 \rightarrow \R$
	and $(b_k : X_k \rightarrow [-1,1])_{k \in \{1,2\}}$,
	we have
	\begin{align}
	\label{eq:intg:boxVDC}
		\big| \E_{x_1 \in X_1,x_2 \in X_2} h(x_1,x_2) b_1(x_1) b_2(x_2) \big|
		\leq \| h \|_{\Box(X_1 \times X_2)}.
	\end{align}
	For $(h_{\omega} : X_1 \times X_2 \rightarrow \R)_{\omega \in \{0,1\}^2}$, we have
	\begin{align}
	\label{eq:intg:GCS}
		\big| \langle ( h_\omega ) \rangle_{\Box(X_1 \times X_2)} \big|
		\leq \prod_{\omega \in \{0,1\}^2}
		\| h_\omega \|_{\Box(X_1 \times X_2)}.
	\end{align}
\end{fact}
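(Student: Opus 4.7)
Both inequalities are entirely standard Cauchy--Schwarz computations, each obtained by a pair of applications of Cauchy--Schwarz, one in each coordinate of $X_1 \times X_2$. They present no genuine obstacle; the main bookkeeping task is only to ensure that after the duplications of variables, each of the four function slots is visited the correct number of times so that the right-hand side assembles to the expected box norm.

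For \eqref{eq:intg:boxVDC}, the plan is to let $S$ denote the left-hand side and introduce the partial average $G(x_1) := \E_{x_2 \in X_2} h(x_1, x_2) b_2(x_2)$, so that $S = \E_{x_1 \in X_1} b_1(x_1) G(x_1)$. A first Cauchy--Schwarz in $x_1$, using $|b_1| \leq 1$, gives $|S|^2 \leq \E_{x_1} |G(x_1)|^2$, which upon expansion of the square equals
\begin{align*}
\E_{x_1 \in X_1}\, \E_{x_2, x_2' \in X_2} h(x_1, x_2) h(x_1, x_2') b_2(x_2) b_2(x_2').
\end{align*}
A second Cauchy--Schwarz, this time in the pair $(x_2, x_2')$ and absorbing $|b_2| \leq 1$, then produces
\begin{align*}
|S|^4 \leq \E_{x_1, x_1' \in X_1}\, \E_{x_2, x_2' \in X_2} h(x_1, x_2) h(x_1, x_2') h(x_1', x_2) h(x_1', x_2'),
\end{align*}
and the right-hand side is exactly $\|h\|^4_{\Box(X_1 \times X_2)}$ by definition of the box norm.

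For \eqref{eq:intg:GCS}, I would perform the same two Cauchy--Schwarz steps, but now applied directly to the multilinear scalar product. Split the product along the first index to write
\begin{align*}
\langle (h_\omega) \rangle_{\Box(X_1 \times X_2)} = \E_{x_2^{(0)}, x_2^{(1)} \in X_2} F_0(x_2^{(0)}, x_2^{(1)}) F_1(x_2^{(0)}, x_2^{(1)}),
\end{align*}
with $F_i(y_0, y_1) = \E_{x_1 \in X_1} h_{i,0}(x_1, y_0) h_{i,1}(x_1, y_1)$. A first Cauchy--Schwarz in $(x_2^{(0)}, x_2^{(1)})$ yields $|\langle (h_\omega) \rangle_{\Box}|^2 \leq (\E F_0^2)(\E F_1^2)$. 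Expanding each $\E F_i^2$ into an average over $(x_1^{(0)}, x_1^{(1)}, x_2^{(0)}, x_2^{(1)})$ and applying a second Cauchy--Schwarz in the doubled variable $(x_1^{(0)}, x_1^{(1)})$ separates the factors involving $h_{i,0}$ from those involving $h_{i,1}$ and bounds $\E F_i^2 \leq \|h_{i,0}\|^2_{\Box} \|h_{i,1}\|^2_{\Box}$. Multiplying the estimates for $i = 0, 1$ and extracting a square root delivers the announced inequality $|\langle (h_\omega) \rangle_{\Box}| \leq \prod_{\omega \in \{0,1\}^2} \|h_\omega\|_{\Box(X_1 \times X_2)}$.
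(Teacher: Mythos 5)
Your double Cauchy--Schwarz argument for each inequality is correct and is precisely the standard proof. The paper itself gives no proof here, citing~\cite[p.~161]{GW:complexity} for the Van der Corput inequality and~\cite[Lemma~B.2]{GT:lineqs} for Gowers--Cauchy--Schwarz, and those references argue exactly as you do.
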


In our situation, we need a slight variant of the local
$U^2$ norm defined in~\cite{Shao:configs}.

\begin{definition}[Twisted $U^2$ norm]
\label{thm:intg:twistedU2def}
	Let $a,b \in \Z$ and $g : \Z_M \rightarrow \R$.
	The $(a,b)$-twisted $U^2$ norm of $g$ with respect to $X_1,X_2$ is
	\begin{align*}
		\| g \|_{\boxtimes_{a,b}(X_1 \times X_2)}^4 =
		\E_{x^{(0)},x^{(1)} \in X_1 \times X_2} 
		\textstyle\prod\limits_{ \omega \in \{0,1\}^2 }
		g( a x_1^{(\omega_1)} + b x_2^{(\omega_2)} ).		
	\end{align*}
	When $a=b=1$ we simply write $\| g \|_{\boxtimes(X_1 \times X_2)}$.
\end{definition}

With these notations, the local Gowers norm of a function $f$
with respect to sets $X_0,X_1,X_2$ as defined by 
Shao~\cite[Definition~3.1]{Shao:configs} is
\begin{align*}
	\| f \|_{ U^2(X_0,X_1,X_2) }^4 = 
	\E_{x_0 \in X_0} \| f( x_0 + \,\cdot\,) \|_{\boxtimes(X_1 \times X_2)}^4.
\end{align*}
From now on we keep the suggestive 
``local Gowers norm'' terminology,
but we use the expression in the right-hand side 
for computational purposes.

We are now ready to start with the proof
of Proposition~\ref{thm:intg:intgcplx1}.
We introduce, for a system of Bohr sets 
$\bB = (B_0,\dots,B_q)$,
the multilinear operator on functions
\begin{align*}
	T_{\bB} (f_1,\dots,f_t) = 
	\E_{x_0 \in B_0,\dots,x_q \in B_q} 
	f_1 \big[ \varphi_1 (x) \big] \dots f_t \big[ \varphi_t (x) \big].
\end{align*}
The next proposition then constitutes the first step of
our density increment strategy, in which we deduce
that a set $A$ either possesses many $\varphi$-configurations,
or it induces a large $T_{\bB}$-average involving
the balanced function of $A$.
Here and in the following, we occasionally make superfluous assumptions 
on the Bohr sets involved, in order to facilitate the combination 
of intermediate propositions.

\begin{proposition}[Multilinear expansion]
\label{thm:intg:multilinexp}
	Suppose that $A$ is a subset of density $\alpha$
	of a regular $d$-dimensional
	Bohr set $B = B_0$,
	and write $f_A = 1_A - \alpha 1_B$.
	Suppose also that $B_1,\dots,B_q$ are regular Bohr sets
	with $B_i \leq_{\rho} B_{i-1}$ for all $i \in [q]$, 
	where $\rho \leq c/d$.
	Then either
	\begin{enumerate}
		\item (Many patterns) $T_{\bB}(1_A,\dots,1_A) \geq \alpha^t / 4$,
		\item (Large $T$-average) 
				or there exist functions 
				$f_1,\dots,f_t : \Z_M \rightarrow [-1,1]$
				and $i \in [t]$ such that $f_i = f_A$ and
				$|T_{\bB}(f_1,\dots,f_t)| \gg \alpha^t$.
	\end{enumerate}
\end{proposition}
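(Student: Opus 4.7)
The plan is to carry out a standard multilinear expansion of $T_{\bB}(1_A,\dots,1_A)$ after the splitting $1_A = \alpha 1_B + f_A$, and then dichotomize by pigeonhole between the main term and the $2^t - 1$ error terms (each of which contains at least one factor of $f_A$). Expanding, we obtain
\begin{align*}
	T_{\bB}(1_A,\dots,1_A)
	= \alpha^t\, T_{\bB}(1_B,\dots,1_B)
	+ \sum_{\varnothing \neq S \subseteq [t]} T_{\bB}(g_1^S,\dots,g_t^S),
\end{align*}
where $g_i^S = f_A$ for $i \in S$ and $g_i^S = \alpha 1_B$ otherwise. Both $\alpha 1_B$ and $f_A$ take values in $[-1,1]$, so if the main term dominates then (i) follows, while if the error terms together beat $\alpha^t/4$ in absolute value, pigeonholing among the $2^t - 1$ terms produces one whose size is $\gg_t \alpha^t$, and (ii) holds by picking any $i \in S$ for that term.

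To drive this strategy, the key quantitative input is the lower bound $T_{\bB}(1_B,\dots,1_B) \geq 1/2$, say. This is where the specific form $\varphi_i(x_0,x) = x_0 + \psi_i(x)$ and the geometric nesting $B_i \leq_{\rho} B_{i-1}$ are used. First I would observe that for $x_j \in B_j$ with $j \in [q]$ and any frequency $r$ in the frequency set of $B_0$, the nesting iterated $j$ times gives $\| rx_j/M \| \leq \rho^j \delta_0$, so by the triangle inequality
\begin{align*}
	\| r \psi_i(x)/M \| \leq \sum_{j=1}^q |a_{ij}|\, \rho^j \delta_0 \leq \| \varphi \|\cdot \frac{\rho\,\delta_0}{1-\rho} \leq C\rho\, \delta_0,
\end{align*}
since $\rho \leq 1/2$ and the coefficients $a_{ij}$ are bounded by $\|\varphi\|$. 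Hence $\psi_i(x) \in (B_0)_{|C\rho}$ uniformly in the inner variables and in $i \in [t]$.

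Having localized $\psi_i(x)$ inside a small dilate of $B_0$, I would then apply the regularity of $B_0$ to estimate the average over $x_0$: for each fixed $(x_1,\dots,x_q)$ the second regularity estimate (applied with $X'$ the single-point set $\{\psi_i(x)\}$ of $(B_0)_{|C\rho}$) yields
$\E_{x_0 \in B_0} 1_{B_0}(x_0 + \psi_i(x)) = 1 + O(\rho d)$ for each $i \in [t]$, and a union bound over $i$ then gives
\begin{align*}
	\E_{x_0 \in B_0} \prod_{i=1}^t 1_{B_0}(x_0 + \psi_i(x)) \geq 1 - C t \rho d \geq 1/2,
\end{align*}
provided $\rho \leq c/d$ with $c$ small enough in terms of $t$ (which is allowed since $t$ depends only on $V$). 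Averaging over $(x_1,\dots,x_q)$ gives $T_{\bB}(1_B,\dots,1_B) \geq 1/2$, and so the main term in the expansion is at least $\alpha^t/2$, completing the dichotomy.

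The only genuine obstacle is the main-term estimate, which reduces to verifying that $\psi_i$ evaluated on nested Bohr inputs produces a small element of $B_0$; once that is established the rest is routine multilinear bookkeeping and pigeonholing over the $2^t - 1$ error terms. No inverse theorem or tool beyond regularity calculus on Bohr sets is needed at this stage.
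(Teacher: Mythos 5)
Your approach matches the paper's: expand $1_A = \alpha 1_B + f_A$ multilinearly, lower-bound the main term $T_{\bB}(\alpha 1_B,\dots,\alpha 1_B) \geq \alpha^t/2$ via regularity of $B_0$ and the nesting of the inner Bohr sets, and pigeonhole among the $2^t-1$ error terms. The main-term estimate is packaged a little differently — you apply regularity in the $x_0$-variable for each $i$ separately and then union-bound over $i \in [t]$, whereas the paper restricts $x_0$ to the dilate $B_{|1-\rho'}$ so that $x_0 + \psi_j(x) \in B$ holds deterministically — but these are interchangeable instances of the same regularity calculus, and the remaining bookkeeping is identical.

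One slip worth flagging: the step ``the nesting iterated $j$ times gives $\|rx_j/M\| \leq \rho^j \delta_0$'' is not valid in general. The hypothesis $B_i \leq_\rho B_{i-1}$ only says $B_i \subset (B_{i-1})_{|\rho}$, and since the intermediate Bohr sets $B_1,\dots,B_q$ may have frequency sets different from $\Gamma_0$, the dilations do not compound with respect to a fixed $r \in \Gamma_0$. What the nesting actually yields is the inclusion $B_j \subset (B_0)_{|\rho}$ for every $j \geq 1$, hence $\|r x_j/M\| \leq \rho\delta_0$ uniformly in $j$, and thus $\|r\psi_i(x)/M\| \leq \|\varphi\|\rho\delta_0$. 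This weaker bound is exactly what your final conclusion $\psi_i(x) \in (B_0)_{|C\rho}$ uses, so the argument survives the correction, but the geometric-series step is spurious.
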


\begin{proof}
First observe that, expanding $1_A = \alpha 1_B + f_A$ by multilinearity,
\begin{align}
	\label{eq:intg:multilinexp}
	T_{\bB}(1_A,\dots,1_A) 
	= T_{\bB}(\alpha 1_B,\dots,\alpha 1_B)
	+ \sum T_{\bB}(\ast,\dots,f_A,\dots,\ast)
\end{align}
where the sum is over $2^t - 1$ terms
and the stars stand for functions equal to $\alpha 1_B$ or $f_A$. 
By definition,
\begin{align*}
	T_{\bB}(\alpha 1_B,\dots,\alpha 1_B)
	= \alpha^t \E_{x_0 \in B} \E_{x \in B_1 \times \dotsb \times B_q}
	1_B \big[ x_0 + \psi_1(x) \big] \dots 1_B \big[ x_0 + \psi_t (x) \big].
\end{align*}
Restricting $x_0$ to lie in $B_{| 1-\rho}$ with $\rho \leq c/\|\varphi\|d$, 
we are ensured that $x_0 + \psi_j(x) \in B$ for every $j \in [t]$
and $x \in B_1 \times \dotsb \times B_q \subset B_{|\rho}^q$.
By regularity, we thus have
\begin{align*}
		T_{\bB}(\alpha 1_B,\dots,\alpha 1_B)
		&= \alpha^t \big( \E_{x_0 \in B} 1_{B_{| 1-\rho}}(x_0) + O( \rho d ) \big) \\
		&= ( 1 + O(\rho d) ) \alpha^t \\
		&\geq \alpha^t /2.
\end{align*}
By~\eqref{eq:intg:multilinexp}, if we are not in the first case
of the proposition, then by the pigeonhole principle there must
exist a large average
\begin{align*}
	\alpha^t \ll |T_{\bB}(f_1,f_2,\dots,f_t)|
\end{align*}
where one of the functions $f_i : \Z_M \rightarrow [-1,1]$ is equal to $f_A$.
\end{proof}

The next step is to use the fact that
(twisted) local Gowers norms control the count 
of $\varphi$-configurations,
up to a small error.
This is the analog for general systems 
of complexity $1$ of Shao's~\cite[Proposition~4.1]{Shao:configs};
it is also very similar to
Green and Tao's generalized Von Neumann theorem
for bounded functions~\cite[Theorem~2.3]{GW:complexity}.

\begin{proposition}[Large average implies large Gowers norm]
\label{thm:intg:largeTtolargeU2}
	Let $\eta \in (0,1]$ be a parameter, and suppose that 
	$B_0,\dots,B_q$ are regular $d$-dimensional Bohr sets
	such that $B_i \leq_\rho B_{i-1}$ for all $i \in [q]$,
	where $\rho \leq c\eta^4 / d$.
	Suppose that $f_1,\dots,f_t : \Z_M \rightarrow [-1,1]$
	are such that
	\begin{align*}
		|T_\mathbf{B}(f_1,\dots,f_t)| \geq \eta.
	\end{align*}
	Then for every $i \in [t]$, there exist 
	$1 \leq k < \ell \leq q$ and 
	$a, b \in \Xi_i$ such that
	\begin{align*}
		\E_{u_0 \in B_0} \| f_i(u_0 + \cdot ) \|_{\boxtimes_{a,b}( B_k \times B_\ell )}^4 \geq \eta/2.
	\end{align*}
\end{proposition}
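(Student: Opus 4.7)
The plan is to run a generalized von Neumann argument tailored to the complexity-one setting. Fix an index $i \in [t]$. The exact $1$-normal form assumption at $i$ supplies two indices $k < \ell$ in $[q]$ such that $a_{ik}, a_{i\ell}$ are both nonzero (hence lie in $\Xi_i$), while for every $j \neq i$ the form $\psi_j$ fails to depend on at least one of the variables $x_k, x_\ell$. This lets one partition $[t] \smallsetminus \{i\} = J_k \sqcup J_\ell$, with $J_k = \{j \neq i : a_{jk} = 0\}$ and $J_\ell$ collecting the remaining indices (for which necessarily $a_{j\ell} = 0$). Writing $y = (x_m)_{m \notin \{k,\ell\}}$ for the remaining averaging variables and $R_i(y) = \sum_{m \notin \{k,\ell\}} a_{im} x_m$, one sets
\begin{align*}
F_{x_0, y}(x_k, x_\ell) &= f_i\big(x_0 + a_{ik} x_k + a_{i\ell} x_\ell + R_i(y)\big), \\
b^{(1)}_{x_0, y}(x_k) &= \prod_{j \in J_\ell} f_j\big(\varphi_j(x_0, x)\big), \\
b^{(2)}_{x_0, y}(x_\ell) &= \prod_{j \in J_k} f_j\big(\varphi_j(x_0, x)\big),
\end{align*}
and observes that by construction $b^{(1)}$ depends only on $x_k$ (not on $x_\ell$), $b^{(2)}$ only on $x_\ell$, and both are $[-1,1]$-valued.

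Next, the box Van der Corput inequality~\eqref{eq:intg:boxVDC}, applied for each fixed $(x_0, y)$, absorbs the two side factors:
\begin{align*}
\big| \E_{x_k \in B_k, \,x_\ell \in B_\ell} F_{x_0, y} \cdot b^{(1)}_{x_0, y} \cdot b^{(2)}_{x_0, y} \big| \leq \| F_{x_0, y} \|_{\Box(B_k \times B_\ell)}.
\end{align*}
Taking absolute values in the definition of $T_{\mathbf{B}}$, averaging over $(x_0, y)$, and then using Hölder's inequality to bring the fourth power inside gives
\begin{align*}
|T_{\mathbf{B}}(f_1, \dots, f_t)|^4 \leq \E_{x_0, y} \| F_{x_0, y} \|_{\Box(B_k \times B_\ell)}^4.
\end{align*}
Expanding the definition of $\|\cdot\|_\Box^4$, the integrand is identified with $\| f_i(x_0 + R_i(y) + \cdot) \|_{\boxtimes_{a, b}(B_k \times B_\ell)}^4$ where $a = a_{ik}$ and $b = a_{i\ell}$, both in $\Xi_i$ by construction.

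The last step is to replace the shifted outer average over $x_0$ by the unshifted average over $u_0 \in B_0$. Since every $x_m \in B_m \leq_\rho B_0$ (inductively, by combining the nesting hypotheses $B_i \leq_\rho B_{i-1}$) and the coefficients $a_{im}$ are bounded in terms of $\|\varphi\|$, the shift $R_i(y)$ lies in a small dilate of $B_0$ uniformly in $y$. A regularity calculus application to the $[-1,1]$-valued function $u_0 \mapsto \| f_i(u_0 + \cdot) \|_\boxtimes^4$ then delivers
\begin{align*}
\E_{x_0 \in B_0, \,y} \| f_i(x_0 + R_i(y) + \cdot) \|_\boxtimes^4 = \E_{u_0 \in B_0} \| f_i(u_0 + \cdot) \|_\boxtimes^4 + O(\rho d),
\end{align*}
so that the hypothesis $\rho \leq c \eta^4 / d$ makes the error smaller than $\eta^4/2$, yielding the required lower bound. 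The main bookkeeping hurdle will be verifying that the accumulated dilations from the chain $B_q \leq_\rho \cdots \leq_\rho B_0$, combined with the bounded coefficients of $\varphi$, indeed keep $R_i(y)$ inside a dilate of $B_0$ where regularity can be invoked uniformly in $y$; once this verification is in place, the argument is a matter of collecting the displayed inequalities.
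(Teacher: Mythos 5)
Your proof follows the paper's argument essentially verbatim: both use exact $1$-normal form at $i$ to select $k < \ell$ and split $[t]\smallsetminus\{i\}$ into two classes depending on at most one of $x_k,x_\ell$, then apply the box Van der Corput inequality and H\"older to land on $\E_{x_0,y}\|f_i(x_0+R_i(y)+\cdot)\|_{\boxtimes_{a,b}}^4$, and finally absorb the small shift $R_i(y)$ via regularity in $x_0$. The only cosmetic difference is the order in which you apply VdC and H\"older, and your closing ``bookkeeping hurdle'' is indeed routine: since the $B_m$ share a frequency set and $B_m\subset B_{0|\rho}$ for $m\geq 1$, one has $R_i(y)\in B_{0|\|\varphi\|\rho}$ uniformly in $y$, and the constant $c$ in $\rho\leq c\eta^4/d$ absorbs $\|\varphi\|$.
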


\begin{proof}
Let $i \in [t]$, and recall that $\psi$ is in exact $1$-normal form at $i$.
We may therefore find indices $1 \leq k < \ell \leq q$
and a partition $[t] \smallsetminus \{i\} = X_k \sqcup X_\ell$
into non-empty sets such that 
$\psi_i$ depends on the variables $x_k$ and $x_\ell$,
while for $j \in X_k$ (respectively $j \in X_\ell$),
$\psi_j$ depends at most on the variable $x_k$ (respectively $x_\ell$)
among those two variables.
We decompose vectors $x \in \Z^{q+1}$ accordingly 
as $x = (x_0,x_k,x_\ell,y)$ with $y \in \prod_{j \not\in \{0,k,l\}} B_j$,
and we may write $\psi_i(x_k,x_\ell,y) = a_k x_k + a_\ell x_\ell + \psi_i(0,0,y)$
with $a_k, a_\ell \in \Xi_i$.
Then\footnote{
We write $(B_j)_{j \in X}$ for $\prod_{j \in X} B_j$ in subscripts.}
\begin{multline*}
	\eta \leq 
	\big| \E_{ x_0 \in B_0, y \in (B_j)_{j \not\in \{0,k,\ell\} } }
	\E_{x_k \in B_k, x_\ell \in B_\ell} 
	f_i \big[ x_0 + \psi_i (x_k,x_\ell,y) \big]  \\
	\textstyle \times
	\prod_{j \in X_k} f_j \big[ x_0 + \psi_j (x_k,y) \big]
	\prod_{j \in X_\ell} f_j \big[ x_0 + \psi_j (x_\ell,y) \big] \big|.
\end{multline*}
We may rewrite the averaged function as $h(x_k,x_\ell) b_k (x_k) b_\ell (x_\ell)$,
where $h,b_k,b_\ell$ are functions depending on $x_0,y$ and $b_k,b_\ell$
are bounded by $1$.
By Hölder's inequality, followed by the 
box Van der Corput inequality~\eqref{eq:intg:boxVDC}, 
we thus have
\begin{align*}
	\eta^4 
	&\leq
	\big( \E_{ x_0 \in B_0, y \in (B_j)_{j \not\in \{0,k,\ell\} } } 
	\big| \E_{x_k \in B_k, x_\ell \in B_\ell } 
	h (x_k,x_\ell) b_{k}(x_k) b_{\ell}(x_\ell) \big|  \big)^4
	\\
	&\leq
	\E_{ x_0 \in B^{(0)}, y \in (B_j)_{j \not\in \{0,k,\ell\} } } 
	\big| \E_{x_k \in B_k, x_\ell \in B_\ell } 
	h (x_k,x_\ell)  b_{k}(x_k) b_{\ell}(x_\ell) \big|^4 
	\\
	&\leq \E_{ x_0 \in B_0, y \in (B_j)_{j \not\in \{0,k,\ell\} } }
	\| h \|_{\Box(B_k \times B_\ell) }^4 .
\end{align*}
Unfolding the definition of the box norm, and by regularity on the variable $x_0$, we have
\begin{align*}
	\eta^4 
	&\leq
	\E_{ x_0 \in B_0, y \in (B_j)_{j \not\in \{0,k,\ell\} } } 
	\E_{ x^{(0)},x^{(1)} \in B_k \times B_\ell }  
	\\
	&\phantom{ \E_{ x_0 \in B_0, y \in (B_j) } }
	\textstyle \prod_{ \omega\in \{0,1\}^2 }
	f_i ( x_0 + a_k x_k^{(\omega_k)} + a_\ell x_\ell^{(\omega_\ell)} + \psi_i (0,0,y) ) 
	\\
	&= \E_{ x_0 \in B_0 }
	\E_{ x^{(0)},x^{(1)} \in B_k \times B_\ell } 
	\textstyle \prod_{ \omega\in \{0,1\}^2 }
	f_i ( x_0 + a_k x_k^{(\omega_k)} + a_\ell x_\ell^{(\omega_\ell)} ) + O(\rho d). 	
\end{align*}
Refolding the definition of the $(a_k,a_\ell)$-twisted $U^2$ norm, this
concludes the proof, provided that $\rho \leq c\eta^4 / d$.
\end{proof}

We now wish to reduce the conclusion of the
previous proposition to the situation where $a=b=1$, that is,
when $f_A$ has a large (regular) local Gowers norm.
It turns out that such a reduction is 
always possible by a simple averaging argument,
together with an application of 
the Gowers-Cauchy-Schwarz inequality
to separate the translated functions 
arising from such a process.

\begin{proposition}
\label{thm:intg:untwistingU2}
Let $\eta \in (0,1]$ be a parameter.
Suppose that $B_0,B_1,B_2$
are regular $d$-dimensional Bohr sets such that $B_1,B_2 \leq_\rho B_0$,
and consider two other Bohr sets $\wt{B}_1 \leq_{\wt{\rho}} B_1$
and $\wt{B}_2 \leq_{\wt{\rho}} B_2$,
where $\rho,\wt{\rho} \leq c\eta^4 / d$.
Then for $f : \Z_M \rightarrow [-1,1]$ and $a,b \in \Xi$,
\begin{align*}
	\E_{u_0 \in B_0} \| f( u_0 + \cdot ) \|_{\boxtimes_{a,b}( B_1 \times B_2 )}^4 \geq \eta^4
	\Rightarrow
	\E_{u_0 \in B_0} \| f( u_0 + ab\, \cdot ) \|_{\boxtimes( \wt{B}_1 \times \wt{B}_2 ) }^4 \geq \eta^4 / 2
\end{align*}
\end{proposition}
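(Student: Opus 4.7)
Let $A$ and $T$ denote the LHS and RHS quantities, so the hypothesis reads $A \geq \eta^4$. The strategy is to establish $A \leq T + O((\rho+\wt{\rho})d)$; under the hypothesis $\rho, \wt{\rho} \leq c\eta^4/d$, the error is at most $\eta^4/2$ for $c$ small enough, yielding $T \geq \eta^4/2$. The argument consists of two applications of the regularity calculus to reshape $A$, followed by a single use of Gowers-Cauchy-Schwarz to bound the reshaped quantity by a product of four shifted copies of $T^{1/4}$.

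Expanding $A = \E_{u_0, x^{(0)}, x^{(1)}} \prod_\omega f(u_0 + ax_1^{(\omega_1)} + bx_2^{(\omega_2)})$, the first reshaping step introduces four new averages $y_1^{(0)}, y_1^{(1)} \in \wt{B}_1$ and $y_2^{(0)}, y_2^{(1)} \in \wt{B}_2$ by translating each $x_1^{(j)}$ by $by_1^{(j)}$ and each $x_2^{(j)}$ by $ay_2^{(j)}$. Since $b\wt{B}_1 \subset (B_1)_{|\|\varphi\|\wt{\rho}}$ and $a\wt{B}_2 \subset (B_2)_{|\|\varphi\|\wt{\rho}}$, the regularity calculus validates these insertions at a cumulative cost $O(\wt{\rho}d)$. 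The second step translates $u_0$ by $-ax_1^{(0)} - bx_2^{(0)}$; since $B_1, B_2 \leq_\rho B_0$, this shift belongs to $(B_0)_{|O(\|\varphi\|\rho)}$, and regularity of $B_0$ introduces an additional error $O(\rho d)$. Setting $\alpha = a(x_1^{(1)} - x_1^{(0)})$ and $\beta = b(x_2^{(1)} - x_2^{(0)})$, we arrive at
\begin{align*}
	A = \E_{u_0, x, y} \textstyle\prod_\omega f\bigl(u_0 + \omega_1\alpha + \omega_2\beta + ab(y_1^{(\omega_1)} + y_2^{(\omega_2)})\bigr) + O\bigl((\rho+\wt{\rho})d\bigr).
\end{align*}

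For fixed $u_0$ and $x$, the inner $y$-average is the box scalar product $\langle h_\omega\rangle_{\Box(\wt{B}_1 \times \wt{B}_2)}$ of the four functions $h_\omega(y_1, y_2) = f(u_0 + \omega_1\alpha + \omega_2\beta + ab(y_1 + y_2))$. The Gowers-Cauchy-Schwarz inequality~\eqref{eq:intg:GCS} gives $|\langle h_\omega\rangle_\Box| \leq \prod_\omega \|h_\omega\|_\Box$, and unfolding the definition identifies $\|h_\omega\|_\Box^4 = \|f((u_0+\omega_1\alpha+\omega_2\beta) + ab\,\cdot)\|_{\boxtimes(\wt{B}_1 \times \wt{B}_2)}^4$. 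Taking expectations in $u_0, x$ and applying H\"older's inequality with four exponents equal to $4$ yields
\begin{align*}
	A \leq \textstyle\prod_\omega \Bigl(\E_{u_0, x} \|f((u_0+\omega_1\alpha+\omega_2\beta) + ab\,\cdot)\|_{\boxtimes(\wt{B}_1\times\wt{B}_2)}^4\Bigr)^{1/4} + O\bigl((\rho+\wt{\rho})d\bigr).
\end{align*}

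Each shift $\omega_1\alpha + \omega_2\beta$ lies in $(B_0)_{|O(\|\varphi\|\rho)}$ for every $x$, so regularity of $B_0$ gives each factor value $T + O(\rho d)$, with equality to $T$ when $\omega = (0,0)$. A short case analysis on whether $T \geq \rho d$ or not then shows that the product of these four $\tfrac{1}{4}$-th powers is itself at most $T + O(\rho d)$, closing the chain $A \leq T + O((\rho+\wt{\rho})d)$. The delicate point is the Gowers-Cauchy-Schwarz step: the naive substitution $x_1 = by_1, x_2 = ay_2$, which would convert $ax_1 + bx_2$ into $ab(y_1 + y_2)$, replaces the ranges $B_1 \times B_2$ by the conjugate Bohr sets $b^{-1}B_1 \times a^{-1}B_2$, which cannot readily be exchanged with $\wt{B}_1 \times \wt{B}_2$. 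Gowers-Cauchy-Schwarz circumvents this by decoupling the four $y$-averages and leaving only $\omega$-dependent shifts to $u_0$ that can be absorbed by the regularity of $B_0$.
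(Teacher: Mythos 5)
Your argument is correct and follows essentially the same route as the paper: duplicate variables via regularity to introduce averaging over $\wt{B}_1$, $\wt{B}_2$, then apply Gowers--Cauchy--Schwarz followed by H\"older to separate the four corners, and finally remove the residual shifts by regularity in $u_0$. The small reorganisations you make (shifting $u_0$ before GCS so that the $\omega=(0,0)$ factor is exactly $T$, and bounding the product $\prod_\omega T_\omega^{1/4}$ directly rather than raising to the fourth power and pigeonholing over $\omega$) are cosmetic but actually tidy the constants: note that your case analysis can be compressed to the one-liner $T^{1/4}(T+C\rho d)^{3/4}\leq (T+C\rho d)^{1/4}(T+C\rho d)^{3/4}=T+C\rho d$, with no case split needed.
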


\begin{proof}
Unfolding the definition of the twisted $U^2$ norm, we have
\begin{align*}
	\eta^4 \leq
	\E_{u_0 \in B_0} \E_{x^{(0)},x^{(1)} \in B_1 \times B_2}
	\textstyle\prod\limits_{\omega \in \{0,1\}^2 } f( u_0 + a x_1^{(\omega_1)} + b x_2^{(\omega_2)} ).
\end{align*}
By regularity, we now duplicate the variables 
$x_1^{(\eps)}$ into $x_1^{(\eps)} + b y_1^{(\eps)}$
with $y_1^{(\eps)} \in \wt{B}_1$, and
the variables $x_2^{(\eps)}$ into $x_2^{(\eps)} + a y_2^{(\eps)}$
with $y_2^{(\eps)} \in \wt{B}_2$, so that
\begin{align*}
	\eta^4 - O(\wt{\rho} d) &\leq
	\E_{u_0 \in B_0}
	\E_{x^{(0)},x^{(1)} \in B_1 \times B_2 }
	\E_{y^{(0)},y^{(1)} \in \wt{B}_1 \times \wt{B}_2 } \\
	&\phantom{\quad\quad}
	\textstyle\prod\limits_{ \omega \in \{0,1\}^2 }
	f\big( u_0 + a x_1^{(\omega_1)} + b x_2^{(\omega_2)} + ab( y_1^{(\omega_1)} + y_2^{(\omega_2)} ) \big)
	\\
	&=
	\E_{u_0 \in B_0}
	\E_{x^{(0)},x^{(1)} \in B_1 \times B_2 }
	\langle ( f( u_0 + a x_1^{(\omega_1)} + b x_2^{(\omega_2)} + ab S ) )_\omega 
	\rangle_{ \Box( \wt{B}_1 \times \wt{B}_2 ) },
\end{align*}
where $S : \wt{B}_1 \times \wt{B}_2 \rightarrow \Z_M$
is defined by $S (u_1,u_2) = u_1 + u_2$.
Applying successively the Gowers-Cauchy-Schwarz
inequality~\eqref{eq:intg:GCS} 
and Hölder's inequality, we obtain
\begin{align*}
	c\eta^{16}
	&\leq 
	\big( \E_{u_0 \in B_0}
	\E_{ x^{(0)} , x^{(1)} \in B_1 \times B_2 }
	\textstyle\prod\limits_{\omega \in \{0,1\}^2 }
	\| f( u_0 + a x_1^{(\omega_1)} + b x_2^{(\omega_2)} + ab S ) 
	\|_{ \Box(\wt{B}_1 \times \wt{B}_2) } \Big)^4
	\\
	&\leq
	\textstyle\prod\limits_{\omega \in \{0,1\}^2 }
	\E_{u_0 \in B_0}
	\E_{ x^{(0)} , x^{(1)} \in B_1 \times B_2 }
	\, \| f( u_0 + a x_1^{(\omega_1)} + b x_2^{(\omega_2)} + ab S ) 
	\|_{ \Box(\wt{B}_1 \times \wt{B}_2) }^4.
\end{align*}
By the pigeonhole principle, we may therefore find $\omega\in\{0,1\}^2$ such that
\begin{align*}
	c\eta^4 
	&\leq
	\E_{u_0 \in B_0} 
	\E_{ x^{(0)} , x^{(1)} \in B_1 \times B_2 }
	\| f( u_0 + ax_1^{(\omega_1)} + bx_2^{(\omega_2)} + ab S ) 
	\|_{\Box(\wt{B}_1 \times \wt{B}_2)}^4
	\\
	&=
	\E_{u_0 \in B_0} 
	\| f( u_0 + abS ) \|_{\Box(\wt{B}_1 \times \wt{B}_2)}^4 + O(\rho d),
\end{align*}
where we have used regularity in the variable $u_0$ in the last step.
The proposition follows from recalling Definition~\ref{thm:intg:twistedU2def}.
\end{proof}

At this point, we have reduced to a situation where
we may apply Shao's local inverse $U^2$ 
theorem~\cite[Theorem~3.2 and Lemma~5.1]{Shao:configs},
quoted below, to obtain a density increment.
The presence of a coefficient $m=ab$ calls for a minor 
variant\footnote{
Note also that Bohr sets on $\Z$ are used in that reference,
however this is only a cosmetic difference.
We actually quote a slightly weaker, but simpler, one-case consequence
of Shao's result to fluidify our argument.
}
of that result, which can however be effortlessly extracted out
of Shao's argument: we omit the proof.
Note also that in the proposition below,
we consider Bohr sets of $\Z_M$ as sets of integers
via the pullback of $\pi : [-M/2,M/2]_\Z \isom \Z_M$.

\begin{proposition}[Local inverse $U^2$ theorem~\cite{Shao:configs}]
\label{thm:intg:U2inv}
	Let $\eta \in (0,\tfrac{1}{2}]$ and $m \in \Xi \cdot \Xi$ be parameters.
	Suppose that $B_0,B_1,B_2$ are regular $d$-dimensional Bohr sets
	such that $B_1 \leq_\rho B_0$ and
	$B_2 \leq_\rho B_1$, where $\rho \leq c\eta^{12} / d$.
	Suppose also that $f: \Z_M \rightarrow [-1,1]$ 
	is such that $\E_{B_0} f = 0$ and
	\begin{align*}
		\E_{u_0 \in B_0}
		\| f(u_0 + m \, \cdot \,) \|_{\boxtimes(B_1 \times B_2)}^4 \gg \eta^4.
	\end{align*}
	Then there exists $u \in \Z$ and a regular Bohr set $B_3$ such that
	$u + m B_3 \subset B_0$ in $\Z$, and
	\begin{align*}
		d_3 \leq d + 1,
		\quad\text\quad
		\delta_3 \geq (\eta/d)^{O(1)} \delta_1,
		\quad\text\quad
		\E_{u + m B_3} f \geq c\eta^{12} .
	\end{align*}
\end{proposition}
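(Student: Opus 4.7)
The plan is to adapt the proof of Shao's local inverse $U^2$ theorem~\cite[Theorem~3.2 and Lemma~5.1]{Shao:configs}, which handles the case $m=1$, by tracking the fixed non-zero integer $m \in \Xi \cdot \Xi$ of absolute value at most $\|\varphi\|^2 = O(1)$. Since $M$ is prime and chosen much larger than $\|\varphi\|$, multiplication by $m$ is an automorphism of $\Z_M$; this is essentially the only algebraic fact we need beyond what is already in~\cite{Shao:configs}.

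First I would absorb the multiplicative twist into the box norm via the change of variables $y_i = m x_i$ inside each factor, yielding
\begin{align*}
\|f(u_0 + m\cdot)\|_{\boxtimes(B_1 \times B_2)}^4
= \|f(u_0 + \cdot)\|_{\boxtimes(mB_1 \times mB_2)}^4,
\end{align*}
where $mB_i = B(m^{-1}\Gamma_i, \delta_i)$ is a regular Bohr set of the same dimension and radius as $B_i$. The hypothesis then becomes a standard large averaged local $U^2$ norm for $f$ itself over the pair $mB_1, mB_2$, now sitting inside $B_0$ via the automorphism.

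Second, I would run Shao's Fourier argument essentially verbatim on this reformulation: (i) open the box norm, then apply Cauchy--Schwarz and Parseval on the Bohr set $mB_1$ to convert the large averaged box norm into the existence, for a positive-density set of $u_0 \in B_0$, of a frequency $r_{u_0} \in \Z_M$ along which $f \cdot 1_{u_0 + mB_1}$ carries significant $\ell^2$-mass; (ii) pigeonhole on an $\eta$-net of the Fourier dual of $mB_1$, of size $(d/\eta)^{O(d)}$, to extract a single frequency $r$ working for many $u_0$'s simultaneously; (iii) form the refined Bohr set $B_3 = B(\Gamma_1 \cup \{m^{-1}r\}, \delta_3)$ with $\delta_3 = (\eta/d)^{O(1)}\delta_1$ and regularize it; (iv) observe that $e_M(r \cdot)$ is essentially constant on the dilated set $mB_3$, and conclude via a standard Bourgain-type averaging that some translate $u + mB_3$ supports a density increment $\E_{u + mB_3} f \geq c\eta^{12}$.

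The main obstacle is geometric bookkeeping rather than new mathematical content. In particular, one must verify that the conclusion $u + mB_3 \subset B_0$ holds in $\Z$, not merely in $\Z_M$: this is ensured because $mB_3$ has diameter at most $|m|\delta_3 M \ll \rho \delta_0 M$ given $\rho \leq c\eta^{12}/d$ and $|m|=O(1)$, so one can shift a lift of $u_0 \in B_0$ inside $[-M/2,M/2]_\Z$ by at most $|m|\delta_3 M$ to place $u + mB_3$ inside the canonical integer lift of $B_0$. The dimension bound $d_3 \leq d+1$ follows from adjoining the single frequency $m^{-1}r$ to $\Gamma_1$, and the radius bound $\delta_3 \geq (\eta/d)^{O(1)} \delta_1$ arises jointly from the spacing of the $\eta$-net and the regularization step. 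Since the only structural change from Shao's proof is the insertion of the bounded unit twist $m$, which preserves every Fourier-analytic identity invoked there, the extraction is indeed effortless.
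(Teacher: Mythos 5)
The paper does not actually give a proof of this proposition: it is quoted from Shao~\cite{Shao:configs} with an explicit remark that the $m$-twisted variant ``can however be effortlessly extracted out of Shao's argument: we omit the proof.'' There is therefore no written paper proof to compare against. Your sketch is a sensible and correct outline of how one would carry out that extraction, and it is consistent with the author's claim of effortlessness.

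Two routes are available, as your opening sentence notes: either track the unit $m$ explicitly through every Fourier-analytic step of Shao's argument, or absorb it by the change of variables you then pursue. The latter is legitimate and tidy: your identity $\| f(u_0 + m\,\cdot\,) \|_{\boxtimes(B_1\times B_2)}^4 = \| f(u_0 + \cdot) \|_{\boxtimes(mB_1\times mB_2)}^4$ is correct, and $mB_i = B(m^{-1}\Gamma_i,\delta_i)$ does have the same dimension, radius and regularity as $B_i$. The one point worth spelling out is that the nesting hypotheses $B_1 \leq_\rho B_0$, $B_2 \leq_\rho B_1$ must be transported to $mB_1, mB_2$: since $\leq_\rho$ is a set containment (not an identity of frequency sets), and $|m|\leq\|\varphi\|^2=O(1)$, one gets $mB_2 \leq_\rho mB_1$ exactly and $mB_1 \leq_{|m|\rho} B_0$, so the hypotheses of Shao's theorem hold after shrinking the absolute constant $c$ in $\rho \leq c\eta^{12}/d$. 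Shao's argument then produces a Bohr set $B_3'$ refining $mB_1$; setting $B_3 = m^{-1}B_3'$ (a Bohr set with frequency set $m\Gamma_3'$ and the same dimension and radius) recovers the stated conclusion $u + mB_3 = u + B_3' \subset B_0$. Your remarks on the integer-versus-$\Z_M$ containment and on the origin of the bounds $d_3\leq d+1$, $\delta_3 \geq (\eta/d)^{O(1)}\delta_1$ are correct and address exactly the bookkeeping the paper leaves implicit.
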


We are now ready to combine the previous propositions
into our main density-increment statement,
which we then iterate to obtain Proposition~\ref{thm:intg:intgcplx1}.

\begin{proposition}[Main iterative proposition]
\label{thm:intg:mainiteration}
Suppose that $A$ is a subset of density $\alpha \in (0,\tfrac{1}{2}]$ 
of a regular $d$-dimensional Bohr set $B$ contained in $[-N,N]$.
Then either
\begin{enumerate}
	\item 	(Many $\varphi$-configurations) we have
			\begin{align*}
				\#\{ x \in [-N,N]^{q+1} : \varphi(x) \in A^t \} \geq (\alpha\delta/d)^{O(d)} N^{q+1},
			\end{align*}
	\item 	(Density increment) or there exists $u \in \Z$, $m \in \N$ 
			and a regular Bohr set $B'$ such that
			$u + m B' \subset B$ in $\Z$ and, writing $\alpha' = |A \cap (u + m B')|/|B'|$,
			\begin{align*}
				\alpha' \geq (1 + c\alpha^{12t-1}) \alpha,
				\quad\quad
				d' \leq d + 1,
				\quad\quad
				\delta' \geq (\alpha/d)^{O(1)} \delta.
			\end{align*}
\end{enumerate}
\end{proposition}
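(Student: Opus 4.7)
The plan is to chain, in order, Propositions~\ref{thm:intg:multilinexp}, \ref{thm:intg:largeTtolargeU2}, \ref{thm:intg:untwistingU2}, and \ref{thm:intg:U2inv} along a single nested family of regular Bohr sets, with the dilation parameters tuned small enough that every hypothesis is simultaneously satisfied.

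First I would fix $\rho = c(\alpha/d)^{C}$ for a sufficiently large absolute constant $C$, and invoke Bourgain's regularization lemma to produce a chain $B = B_0 \geq B_1 \geq \dots \geq B_q$ of regular $d$-dimensional Bohr sets (all sharing the frequency set of $B$), with $B_{j+1} \leq_\rho B_j$ at every step. Applying Proposition~\ref{thm:intg:multilinexp} to $A$ and this chain yields two cases. In case (i), the inequality $T_{\bB}(1_A,\ldots,1_A) \geq \alpha^t/4$ unpacks as a lower bound on the number of tuples $(x_0,\ldots,x_q) \in B_0 \times \cdots \times B_q$ with $\varphi(x) \in A^t$; using the standard size bound $|B_j| \geq \delta_j^{d} M \geq (\rho^j \delta)^{d} M$ together with $M \asymp N$ and $B \subset [-N,N]$, this count is $\geq (\alpha\delta/d)^{O(d)} N^{q+1}$, placing us in case (i) of the main proposition.

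Otherwise, case (ii) of Proposition~\ref{thm:intg:multilinexp} supplies $i \in [t]$ and functions $f_1,\ldots,f_t : \Z_M \to [-1,1]$ with $f_i = f_A$ and $|T_{\bB}(f_1,\ldots,f_t)| \gg \alpha^t$. Feeding this into Proposition~\ref{thm:intg:largeTtolargeU2} with $\eta \asymp \alpha^t$ produces indices $1 \leq k < \ell \leq q$ and coefficients $a,b \in \Xi_i$ such that
\begin{align*}
    \E_{u_0 \in B_0} \| f_A( u_0 + \cdot ) \|_{\boxtimes_{a,b}(B_k \times B_\ell)}^4 \gg \alpha^t.
\end{align*}
Choosing regular dilates $\wt{B}_1 \leq_{\wt\rho} B_k$ and $\wt{B}_2 \leq_{\wt\rho} B_\ell$ with $\wt\rho = c(\alpha/d)^{C}$, Proposition~\ref{thm:intg:untwistingU2} converts this into the untwisted bound $\E_{u_0 \in B_0} \| f_A( u_0 + ab\,\cdot ) \|_{\boxtimes(\wt{B}_1 \times \wt{B}_2)}^4 \gg \alpha^t$. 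Setting $m := ab \in \Xi \cdot \Xi$ and noting $\E_{B_0} f_A = 0$, Proposition~\ref{thm:intg:U2inv} (with $\eta_3 \asymp \alpha^{t/4}$) then supplies a shift $u \in \Z$ and a regular Bohr set $B'$ of dimension $\leq d+1$ and radius $\geq (\alpha/d)^{O(1)} \delta$ with $u + mB' \subset B$ in $\Z$ and $\alpha' - \alpha = \E_{u+mB'} f_A \geq c\alpha^{3t}$. Since $\alpha \leq 1/2$, this yields $\alpha' \geq \alpha(1 + c\alpha^{12t-1})$, as required.

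The main technical obstacle is the parameter bookkeeping: $\rho$ and $\wt\rho$ must simultaneously respect the tolerances of all four propositions, forcing them both to be at most $c\alpha^{O(t)}/d$, and one must check that the cumulative dilations $\rho^k \wt\rho$ and $\rho^{\ell-k}$ relating $\wt{B}_1, \wt{B}_2$ to $B_0$ fall within the $c\alpha^{3t}/d$ tolerance required by Proposition~\ref{thm:intg:U2inv}. Conceptually, the subtle step is the untwisting one (Proposition~\ref{thm:intg:untwistingU2}), which converts the twisted box inner product furnished by the exact $1$-normality of $\psi$ at $i$ into the untwisted local Gowers norm required by Shao's local inverse $U^2$ theorem, at the cost of propagating the dilation factor $m = ab$ into the translated dilate $u + mB'$ appearing in the conclusion.
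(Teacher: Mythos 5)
Your proposal is correct and follows exactly the same chain as the paper's proof: choose a nested regular chain $B=B_0\geq B_1\geq\dots\geq B_q$ via Bourgain regularization with a suitably small common dilation, apply Proposition~\ref{thm:intg:multilinexp} to dichotomize, in case (i) convert $T_{\bB}(1_A,\dots,1_A)\geq\alpha^t/4$ into an integer pattern count using the Bohr-set size bounds, and in case (ii) feed the $T$-average into Propositions~\ref{thm:intg:largeTtolargeU2}, \ref{thm:intg:untwistingU2} and \ref{thm:intg:U2inv} in succession. Your identification of the untwisting step as the key conceptual move, and your parameter-bookkeeping remarks, also match the paper's presentation.

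One small point worth flagging: the stated conclusion of Proposition~\ref{thm:intg:largeTtolargeU2} reads $\geq\eta/2$, but its proof actually yields $\geq\eta^4-O(\rho d)\geq\eta^4/2$, so the stated bound is almost certainly a typo. Taking it at face value led you to $\E_{u+mB'}f_A\geq c\alpha^{3t}$ (via $\eta_3\asymp\alpha^{t/4}$), which is not actually obtainable. The corrected bound $\E_{u_0\in B_0}\|f_A(u_0+\cdot)\|^4_{\boxtimes_{a,b}}\gg\eta^4$ with $\eta=\alpha^t$ feeds into Proposition~\ref{thm:intg:U2inv} with $\eta_3\asymp\alpha^t$ and gives $\E_{u+mB'}f_A\geq c\alpha^{12t}$, i.e.\ $\alpha'\geq(1+c\alpha^{12t-1})\alpha$ exactly as stated. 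Since you nonetheless assert the claimed $\alpha^{12t-1}$ increment, your final conclusion is correct; only the intermediate $\alpha^{3t}$ figure is spurious.
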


\begin{proof}
Write $\eta = \alpha^t$ and choose $\rho = c\eta^{12}/d$.
Let $B_0 = B$, and choose regular Bohr sets $B_1,\dots,B_q$
with $B_i = B_{i-1 | \rho_i}$ and $\rho_i \in [\rho/2,\rho]$,
so as to apply Proposition~\ref{thm:intg:multilinexp}.
Since $B_i \subset [-N,N]$ and $M > 2\|\varphi\| N$, 
for any $x \in B_0 \times \dotsb \times B_q$,
$\varphi(x)$ belongs to $A^t$ modulo $M$ if and only if it does in $\Z$.
Therefore, if we are in the first case of 
Proposition~\ref{thm:intg:multilinexp}, we have
\begin{align}
	\label{eq:intg:patterncount}
	\#\{ x \in [-N,N]^{q+1} : \varphi(x) \in A^t \}
	\geq c\alpha^t |B_0| \dots |B_q|
	\geq (\alpha\delta/d)^{O(d)} M^{q+1}.
\end{align}
In the second case, we deduce, by Proposition~\ref{thm:intg:largeTtolargeU2},
that there exist $i \in [t]$, $1 \leq k < \ell \leq q$ 
and twists $a,b \in \Xi_i$ such that, for $f_A = 1_A - \alpha 1_{B_0}$,
\begin{align*}
	\E_{u_0 \in B_0} \| f_A (u_0 + \,\cdot\,) \|_{\boxtimes_{a,b}(B_k \times B_\ell)}^4 \gg \eta^4.
\end{align*}
Via Proposition~\ref{thm:intg:untwistingU2}, we may assume instead that
\begin{align*}
	\E_{u_0 \in B_0} \| f_A (u_0 + ab\,\cdot\,) \|_{\boxtimes(\wt{B}_k \times \wt{B}_\ell)}^4 \gg \eta^4
\end{align*}
for regular dilates $\wt{B}_k = B_{k | \rho_k}$ and $\wt{B}_\ell = B_{\ell | \rho_\ell}$
with $\rho_k,\rho_\ell \in [\rho/2,\rho]$;
note that we have $\wt{B}_k \leq_{2\rho} \wt{B}_\ell$. 
Finally, an application of Proposition~\ref{thm:intg:U2inv} 
to $f_A$ yields a density increment of the desired shape.
\end{proof}

\textit{Proof of Proposition~\ref{thm:intg:intgcplx1}}.
As stated at the beginning of this section,
we use a parametrization $\varphi : \Z^{q+1} \twoheadrightarrow \Z^t \cap \Ker_\Q(V)$,
so that $\rk(\varphi) = \dim(\Ker_\Q V) = t-r$.
We embed $[-N,N]$ in a regular Bohr set 
$B^{(0)} \coloneqq B( \{1\} , \tfrac{c}{D} )$ of $\Z_M$,
where $c \in [1,2]$ and $M = DN$.
The set $A^{(0)} \coloneqq A$ then has density $\gg \alpha$ in $B^{(0)}$.
We now construct iteratively a sequence of regular Bohr sets $B^{(i)}$ 
of dimension $d_i$ and radius $\delta_i$ contained in $[-N,N]$, 
and a sequence of subsets $A_i$ of $B^{(i)}$ of density $\alpha_i$;
we also view $A_i$ as subsets of $\Z$ via the pullback
of $\pi : [-M/2,M/2]_\Z \isom \Z_M$.
At each step we apply Proposition~\ref{thm:intg:mainiteration}
to the set $A_i$, and in the second case of that proposition 
we define $A_{i+1}$ in $\Z$ by
\begin{align*}
	A_i \cap (u_{i+1} + m_{i+1} B_{i+1}) = u_{i+1} + m_{i+1} A_{i+1}.
\end{align*}
Writing $S_{\varphi}(Y) = \#\{ x \in [-N,N]^{q+1} : \varphi(x) \in Y^t \}$
for a set of integers $Y$, it follows from the linearity and
the presence of a shift variable in $\varphi$ that 
$S_{\varphi}(A) \geq S_{\varphi}(A_i)$ for every $i$.

From $\alpha_{i+1} \geq (1 + c\alpha_i^{12t-1})\alpha_i$ 
and a familiar geometric series summation~\cite[Chapter 6]{AG:acbook},
we deduce that the algorithm runs for at most $O(\alpha^{-12t+1})$ steps.
Iterating the dimension and radius bounds,
we also deduce that $d_i \ll \alpha^{-12t+1}$
and $\delta_i \geq \exp[ -C \alpha^{-12t+1} \log \alpha^{-1}]$.
Bounding crudely $\alpha^2 \log \alpha^{-1} \ll 1$,
we have therefore, in the first case of Proposition~\ref{thm:intg:mainiteration},
\begin{align}
\label{eq:intg:finallowerbound}
	\#\{ x \in [-N,N]^{q+1} : \varphi(x) \in A^t \}
	\geq \exp\big[ \! - C \alpha^{-24t} \big] \cdot N^{q+1}.
\end{align}
Since $\varphi$ has rank $t-r$, for each $y \in [N]^t$,
we have the multiplicity bound
\begin{align*}
	\#\{ x \in [-N,N]^{q+1} : \varphi(x) = y \} \ll N^{(q+1) - (t-r)}.
\end{align*}
Summing over values $y=\varphi(x)$ 
in~\eqref{eq:intg:finallowerbound},
we have therefore
\begin{align*}
	\#\{ y \in A^t : Vy = 0 \}
	\geq \exp\big[ \! - C \alpha^{-24t} \big] \cdot N^{t-r}.
\end{align*}
\qed

\bibliographystyle{amsplain}
\bibliography{primescplx1}

\bigskip

\textsc{\footnotesize Département de mathématiques et de statistique,
Université de Montréal,
CP 6128 succ. Centre-Ville,
Montréal QC H3C 3J7, Canada
}

\textit{\small Email address: }\texttt{\small henriot@dms.umontreal.ca}

\end{document}